\numberwithin{equation}{section}
\newtheorem{defi}{Definition}[section]
\newtheorem{thm}[defi]{Theorem}
\newtheorem{lemm}[defi]{Lemma}
\newtheorem{rem}[defi]{Remark}
\newtheorem{cor}[defi]{Corollary}
\newtheorem{prop}[defi]{Proposition}
\newcommand{\tr}{\mathrm{Tr}} 
\newcommand{\R}{\mathbb{R}}
\begin{document}

\title[Construction of Malliavin differentiable strong solutions]
{Construction of Malliavin differentiable strong solutions of SDEs under an integrability condition on the drift without the Yamada-Watanabe principle}
\author[D. R. Ba\~{n}os]{David R. Ba\~{n}os}
\address{CMA, Department of Mathematics, University of Oslo, Moltke Moes vei 35, P.O. Box 1053 Blindern, 0316 Oslo, Norway.}
\email{davidru@math.uio.no}
\author[S. Duedahl]{Sindre Duedahl}
\address{CMA, Department of Mathematics, University of Oslo, Moltke Moes vei 35, P.O. Box 1053 Blindern, 0316 Oslo, Norway.}
\email{sindred@math.uio.no}
\author[T. Meyer-Brandis]{Thilo Meyer-Brandis}
\address{Department of Mathematics,LMU, Theresienstr. 39, D-80333 Munich, Germany}
\email{Meyer-Brandis@math.lmu.de}
\author[F. Proske]{Frank Proske}
\address{CMA, Department of Mathematics, University of Oslo, Moltke Moes vei 35, P.O. Box 1053 Blindern, 0316 Oslo, Norway.}
\email{proske@math.uio.no}

\maketitle


\begin{abstract}
In this paper we aim at employing a compactness criterion of Da Prato, Malliavin, Nualart \cite{DPMN92} for square integrable Brownian functionals to construct unique strong solutions of SDE's under an integrability condition on the drift coefficient. The obtained solutions turn out to be Malliavin differentiable and are used to derive a Bismut-Elworthy-Li formula for solutions of the Kolmogorov equation.
\end{abstract}
 
\vskip 0.1in
\textbf{Key words and phrases}: Strong solutions of SDEs, Malliavin regularity, Kolmogorov equation, Bismut-Elworthy-Li formula, singular drift coefficient.

\textbf{MSC2010:}  60H10, 60H07, 60H40, 60J60. 
 

\section{Introduction}

The object of study of this paper is the stochastic differential equation (SDE)
\begin{align}\label{introSDE}
X_t = x+ \int_0^t b(s,X_s^x)ds+B_t, \quad 0\leq t\leq T, \quad x\in \R^d,
\end{align}
where $B_{\cdot}$ is a $d$-dimensional Brownian motion on some complete probability space $(\Omega,\mathcal{F},\mu)$ with respect to a $\mu$-completed Brownian filtration $\{\mathcal{F}_t\}_{0\leq t\leq T}$ and where $b:[0,T]\times \R^d\rightarrow \R^d$ is a Borel-measurable function.

In this article we are interested in the analysis of strong solutions $X_{\cdot}$ of the SDE \eqref{introSDE}, that is an $\{\mathcal{F}_t\}_{0\leq t\leq T}$-adapted solution processes on $(\Omega, \mathcal{F},\mu)$ when the drift coefficient is irregular, e.g. non-Lipschitzian or discontinuous.

A widely used construction method for strong solutions in this case in the literature is based on the so-called Yamada-Watanabe principle. Using this principle, a once constructed weak solution, that is a solution which is not necessarily a functional of the driving noise, combined with pathwise uniqueness gives a unique strong solution. So
\begin{align}\label{Yamada-Watanabe}
\framebox[1.1\width]{Weak solution} \quad + \quad \framebox[1.1\width]{Pathwise uniqueness} \quad \Rightarrow \quad \framebox[1.1\width]{Unique strong solution}.
\end{align}

Here, pathwise uniqueness means the following: If $X_{\cdot}^{(1)}$ and $X_{\cdot}^{(2)}$ are $\{\mathcal{F}_t^{(1)}\}_{0\leq t\leq T}$- and respectively $\{\mathcal{F}_t^{(2)}\}_{0\leq t\leq T}$-adapted weak solutions on a probability space, then these solutions must coincide a.s. See \cite{YW71}. In the milestone paper from 1974 \cite{Zvon74}, A.K. Zvonkin used the Yamada-Watanabe principle in the one-dimensional case in connection with PDE techniques to construct a unique strong solution to \eqref{introSDE}, when $b$ is merely bounded and measurable. Subsequently, the latter result was generalised by A.Y. Veretennikov \cite{Ver79} to the multidimensional case.

Important other and more recent results in this direction are e.g. \cite{KR05}, \cite{GyK96} and \cite{Kryl01}. See also the striking work \cite{DPFPR13} in the Hilbert space setting, where the authors use solutions of infinite-dimensional Kolmogorov equations to obtain unique strong solutions of stochastic evolution equations with bounded and measurable drift for a.e. initial values.

In this article we want to employ a construction principle for strong solutions developed in \cite{MBP10}. This method which relies on a compactness criterion from Malliavin Calculus for square integrable functionals of the Brownian motion \cite{DPMN92} is in diametrical opposition to the Yamada-Watanabe principle \eqref{Yamada-Watanabe} in the sense that
\begin{align*}
\framebox[1.1\width]{Strong existence} \quad + \quad \framebox[1.1\width]{Uniqueness in law} \quad \Rightarrow \quad \framebox[1.1\width]{Strong uniqueness},
\end{align*}
that is the existence of a strong solution to \eqref{introSDE} and uniqueness in law of solutions imply the existence of a unique strong solution. A crucial consequence of this approach is the additional insight that the constructed solutions are regular in the sense of Malliavin differentiability.

We mention that this method has been recently applied in a series of other papers. See e.g. \cite{MMNPZ10}, where the authors obtain Malliavin differentiable solutions when the drift coefficient in $\R^d$ is bounded and measurable. Other applications pertain to the stochastic transport equation with singular coefficients \cite{MNP}, \cite{Nils14} or stochastic evolution equations in Hilbert spaces with bounded H\"{o}lder-continuous drift \cite{FNP14}. See also \cite{HP14} in the case of truncated $\alpha$-stable processes as driving noise and \cite{BNP15} in the case of fractional Brownian motion for Hurst parameter $H<1/2$, which is a non-Markovian driving noise.

Using the above mentioned new approach, one of the objectives of this paper is to construct Malliavin differentiable unique strong solutions to \eqref{introSDE} under the integrability condition
\begin{align}\label{introb}
b\in L^q([0,T], L^p(\R^d,\R^d))
\end{align}
for $p\geq 2$, $q>2$ such that
$$\frac{d}{p}+\frac{2}{q}<1.$$
The idea for the proof rests on a mixture of techniques in \cite{MMNPZ10} and \cite{FedFlan10}. More precisely, we approximate in the first step the drift coefficient $b$ by smooth functions $b_n$ with compact support and apply the It\^{o}-Tanaka-Zvonkin "trick" by transforming the solutions $X_{t}^{n,x}$ of \eqref{introSDE} associated with the coefficients $b_n$ to processes
$$Y_t^{n,x}:=X_t^{n,x} + U_n(t,X_t^{n,x}),$$
where the processes $Y_t^{n,x}$ satisfy an equation with more regular coefficients than \eqref{introSDE} given by
$$dY_t^{n,x} = \lambda U_n(t,X_t^{n,x})dt + \left( \mathcal{I}_d + \nabla U_n(t,X_ t^{n,x})\right) dB_t$$
for solutions $U_n$ to the backward PDE's
\begin{align}\label{backwardPDE}
\frac{\partial U_n}{\partial t} + \frac{1}{2} \Delta U_n + b_n \nabla U_n = \lambda U_n - b_n, \quad U_n(T,x)=0.
\end{align}

In the second step we use the compactness criterion for $L^2(\Omega)$ in \cite{DPMN92} applied to the sequence $Y_t^{n,x}$, $n\geq 1$ in connection with Schauder-type of estimates of solutions of \eqref{backwardPDE} and techniques from white noise analysis to show that
$$Y_t^{n,x} \xrightarrow{n \to \infty} Y_t^x$$
in $L^2(\Omega)$ for all $t$ and that
$$X_t^x = \varphi(t, Y_t^x),$$
where $\varphi(t,\cdot)$ is the inverse of the function $x\mapsto x+ U(t,x)$ for all $t$ and $U$ a solution of \eqref{backwardPDE}, is a Malliavin differentiable unique strong solution of \eqref{introSDE}.

Our paper is organised as follows:
In Section \ref{main results} we present our main results on the construction of strong solutions (Theorem \ref{thmainres1} and Theorem \ref{generalsde}). As an application of the results obtained in Section \ref{main results} we establish in Section \ref{applications} a Bismut-Elworthy-Li formula for the representation of first order derivatives of solutions of Kolmogorov equations.


\section{Main results}\label{main results}


In this section, we want to further develop the ideas introduced in \cite{FedFlan10} and \cite{MBP10} to derive Malliavin differentiable strong solutions of stochastic differential equations with irregular coefficients. More precisely, we aim at analyzing the SDE's of the form 

\begin{align}\label{eqmain1}
dX_t=b(t,X_t)dt + dB_t, \,\,0\leq t\leq 1,\,\,\, X_0=x  \in \mathbb{R}^d\,,
\end{align}
where the drift coefficient $b:[0,T]\times \mathbb{R}^{d}\longrightarrow 
\mathbb{R}^{d}$ is a Borel measurable function satisfying some integrability condition and $B_t$ is a $d$-dimensional Brownian motion with respect to the stochastic basis 
\begin{equation}
\left( \Omega ,\mathcal{F},\mu \right) ,\left\{ \mathcal{F}_{t}\right\}
_{0\leq t\leq T}  \label{Stochbasis}
\end{equation}%
for the $\mu -$augmented filtration $\left\{ \mathcal{F}_{t}\right\} _{0\leq
t\leq T}$ generated by $B_{t}$. At the end of this section we shall also apply our technique to equations with more general diffusion coefficients (Theorem \ref{generalsde}).

Consider the space
$$L_p^q:=L^q\left( [0,T], L^p(\R^d, \R^d) \right)$$
for $p,q\in \R$ satisfying the following condition
\begin{align}\label{pqcondition}
p> 2, \ q>2 \ \mbox{ and } \ \frac{d}{p} + \frac{2}{q} <1
\end{align}
and denote by $|\cdot|$ the Euclidean norm in $\R^d$. The Banach space $L_p^q$ is endowed with the norm
\begin{align}\label{intcondition}
\|f\|_{L_p^q}= \left( \int_0^T \left( \int_{\R^d} |f(t,x)|^p dx \right)^{q/p} dt \right)^{1/q}< \infty
\end{align}
for $f\in L_p^q$.

The main goal of the paper is to show that SDE's of the type (\ref{eqmain1}) with drift coefficient $b$ satisfying the integrability condition given in (\ref{intcondition}) admit strong solutions that are unique and in addition, Malliavin differentiable.

So, our main result is the following theorem:

\begin{thm} \label{thmainres1}
Suppose that the drift coefficient $b: [0,T] \times \mathbb{R}^d \rightarrow \mathbb{R}^d$ in \eqref{eqmain1} belongs to $L_p^q$. Then there exists a unique global strong solution $X$ to equation \eqref{eqmain1} such that $X_t$ is Malliavin differentiable for all $0 \leq t \leq T$.
\end{thm}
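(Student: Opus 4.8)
The plan is to follow the two-step programme sketched in the introduction, combining the It\^o--Tanaka--Zvonkin transformation with the Da Prato--Malliavin--Nualart compactness criterion. First I would regularise the drift: choose a sequence $b_n \in C_c^\infty([0,T]\times\R^d,\R^d)$ with $b_n \to b$ in $L_p^q$ (and locally, say, so that the $L_p^q$ norms stay bounded). For each $n$ the SDE \eqref{eqmain1} with drift $b_n$ has a unique (classical) strong solution $X^{n,x}$, which is moreover Malliavin differentiable. The key analytic input is the solvability of the backward PDE \eqref{backwardPDE} with $b_n$ in place of $b_n$: by the $L^p$--$L^q$ parabolic theory of Krylov (and the condition $\frac dp+\frac2q<1$, which is exactly what makes the mild solution land in a space of functions with spatial gradient bounded in sup norm) one obtains $U_n$ solving \eqref{backwardPDE} such that, for $\lambda$ chosen large enough, $\|U_n\|_\infty + \|\nabla U_n\|_\infty \le \tfrac12$ uniformly in $n$, together with $W^{1,2}_{p,q}$-type bounds on $U_n$. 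Consequently $x\mapsto x + U_n(t,x)$ is, for every $t$, a $C^1$-diffeomorphism of $\R^d$ with inverse $\varphi_n(t,\cdot)$ having derivatives bounded uniformly in $n$, and the transformed process $Y_t^{n,x} := X_t^{n,x} + U_n(t,X_t^{n,x})$ satisfies
\[
dY_t^{n,x} = \lambda U_n(t,X_t^{n,x})\,dt + \bigl(\mathcal I_d + \nabla U_n(t,X_t^{n,x})\bigr)\,dB_t, \qquad Y_0^{n,x} = x + U_n(0,x),
\]
an equation whose coefficients are bounded uniformly in $n$.

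Second, I would apply the compactness criterion of \cite{DPMN92} to the family $\{Y_t^{n,x}\}_{n\ge 1} \subset L^2(\Omega,\mathcal F_t,\mu)$ for each fixed $t$. This requires three estimates, all uniform in $n$: (i) $\sup_n \|Y_t^{n,x}\|_{L^2(\Omega)} < \infty$, immediate from the boundedness of the coefficients of the $Y^n$-equation; (ii) a bound $\sup_n \mathbb E\bigl[\,|D_s Y_t^{n,x}|^2\,\bigr] \le C$ for a.e.\ $s \le t$ on the Malliavin derivative, obtained by differentiating the $Y^n$-equation (the Malliavin derivative of $X^{n,x}$ solves a linear SDE with coefficient $\nabla b_n$, but after the Zvonkin transform the relevant quantities involve only $\nabla U_n$ and its second derivatives, which are controlled in the $L^p$--$L^q$ scale; the subcriticality $\frac dp+\frac2q<1$ again is what converts these into usable $L^2(\Omega)$ bounds); and (iii) a H\"older-type modulus of continuity $\mathbb E\bigl[\,|D_s Y_t^{n,x} - D_{s'} Y_t^{n,x}|^2\,\bigr] \le C|s-s'|^{\alpha}$. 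Granting these, the criterion yields relative compactness of $\{Y_t^{n,x}\}_n$ in $L^2(\Omega)$; passing to a subsequence, $Y_t^{n,x} \to Y_t^x$ in $L^2(\Omega)$, and $Y_t^x$ inherits Malliavin differentiability with $D_s Y_t^x = \lim D_s Y_t^{n,x}$ weakly.

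Third, I would identify the limit. Since the derivatives of $U_n$ are uniformly bounded, $\varphi_n(t,\cdot) \to \varphi(t,\cdot)$ locally uniformly (where $\varphi(t,\cdot)$ inverts $x\mapsto x+U(t,x)$ for a limit $U$ of the $U_n$, which exists by the PDE estimates and a compactness argument), so $X_t^{n,x} = \varphi_n(t, Y_t^{n,x}) \to \varphi(t, Y_t^x)=:X_t^x$ in probability, and one checks by passing to the limit in the mild/integral formulation that $X^x$ solves \eqref{eqmain1}; here one must argue that $\int_0^t b_n(s,X_s^{n,x})\,ds \to \int_0^t b(s,X_s^x)\,ds$, which follows from Krylov's estimate $\mathbb E\int_0^t |g(s,X_s^{n,x})|\,ds \le C\|g\|_{L_p^q}$ applied to $g = b_n - b$ and the convergence of laws. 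This also shows $X^x$ is $\{\mathcal F_t\}$-adapted, hence a strong solution, and Malliavin differentiable as a $C^1$ image of $Y_t^x$. Finally, strong uniqueness follows from the principle invoked in the introduction: one has uniqueness in law for \eqref{eqmain1} under \eqref{pqcondition} (again via Krylov/Girsanov estimates, or it can be extracted from the weak-existence theory), and strong existence plus uniqueness in law force all strong solutions to coincide; alternatively, since any strong solution $\tilde X$ gives via the same transform a solution of the well-posed $Y$-equation (Lipschitz-type coefficients up to the $\nabla U$ terms, or at least a monotone/uniqueness structure), pathwise uniqueness for $Y$ transfers back to $X$.

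The main obstacle I expect is step two, specifically the uniform Malliavin-derivative bounds (ii) and (iii): the Malliavin derivative of $X^{n,x}$ a priori involves $\nabla b_n$, which blows up as $n\to\infty$, so the whole point of the Zvonkin transform is to trade $\nabla b_n$ for $\nabla U_n$ and $\Delta U_n$; making this quantitative requires the sharp Schauder/$L^p$-parabolic estimates for \eqref{backwardPDE} and a careful use of the condition $\frac dp + \frac2q<1$ to absorb the resulting singular integrals into finite $L^2(\Omega)$ norms, typically via Khasminskii-type or Krylov-type estimates together with, as the introduction hints, techniques from white noise analysis to handle the stochastic-integral terms. The verification of the compactness criterion's continuity-in-$s$ condition uniformly in $n$ is the most delicate single computation.
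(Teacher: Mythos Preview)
Your programme for the compactness step is essentially the paper's: the Zvonkin transform is used to convert the Malliavin-derivative estimates on $X^{n,x}_t$ into estimates involving only $\nabla U_n$ and $\nabla^2 U_n$, and then the Da~Prato--Malliavin--Nualart criterion gives relative compactness in $L^2(\Omega)$ for each fixed~$t$. The paper carries out exactly the H\"older-in-$s$ estimate (your item (iii)) you single out as most delicate, so your diagnosis of the difficulty is accurate.

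Where you diverge from the paper is in the identification of the limit. You propose to invert the Zvonkin map and pass to the limit directly in the SDE via Krylov-type estimates. The paper instead uses the white-noise/Hida machinery: it introduces an explicit candidate $Y_t^b := E_{\tilde\mu}[\tilde B_t\,\mathcal{E}_T^\diamond(b)]$ living a priori in the Hida distribution space $(\mathcal{S})^\ast$, shows via $S$-transform estimates that $Y_t^{b_n}\to Y_t^b$ in $(\mathcal{S})^\ast$ for \emph{every} $t$, and then combines this with the $L^2$-compactness to upgrade to $L^2$-convergence of the full sequence; finally a transformation property for $Y_t^b$ plus injectivity of the $S$-transform verifies the SDE. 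This buys two things your route does not: an explicit representation formula for the solution, and---more practically---an a priori identification of the limit that makes the whole sequence converge, not just $t$-dependent subsequences.

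That last point is the one place your sketch is thin. Compactness gives, for each fixed $t$, a convergent subsequence of $X_t^{n,x}$; but to pass to the limit in $\int_0^t b_n(s,X_s^{n,x})\,ds$ you need convergence of $X_s^{n,x}$ for (almost) all $s$ simultaneously along a \emph{single} subsequence. This is repairable (diagonalise over a countable dense set of times and use a Kolmogorov continuity estimate, or appeal to uniqueness in law of the limit to show all subsequential limits agree), but it is not as immediate as ``Krylov's estimate applied to $g=b_n-b$ and convergence of laws''. The paper sidesteps this entirely because the white-noise limit $Y_t^b$ is defined for all $t$ at once and the $S$-transform argument identifies it independently of the compactness.
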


An important step of the proof of Theorem \ref{thmainres1} is directly based on the study of the regularity of solutions to the following associated PDE to equation SDE (\ref{eqmain1}).

\begin{align}\label{koleq}
\partial_t U(t,x) + b(t,x)\cdot \nabla U(t,x) + \frac{1}{2} \Delta U(t,x) - \lambda U(t,x) + b=0, \ \ t\in [0,T], \ \ U(T,x)=0,
\end{align}
where $U: [0,T]\times \R^d \rightarrow \R^d$, $\lambda>0$ and $b\in L_p^q$.

The following result is due to \cite{FedFlan12} and stablishes the well-posedness of the above PDE problem in a certain space.

First, recall the definition of the following functional spaces
$$\mathbb{H}_{\alpha,p}^q = L^q ([0,T], W^{\alpha,p}(\R^d)), \ \ \mathbb{H}_p^{\beta,q} = W^{\beta,q}([0,T], L^p(\R^d))$$
and
$$H_{\alpha,p}^{q} = \mathbb{H}_{\alpha,p}^q \cap \mathbb{H}_p^{1,q}.$$

The norm in $H_{\alpha,p}^{q}$ can be taken to be
$$\|u\|_{H_{\alpha,p}^{q}} \equiv \|u\|_{\mathbb{H}_{\alpha,p}^q}+ \|\partial_t u\|_{L_p^q}.$$

\begin{thm}\label{theoremU}
Let $p,q$ be such that $p\geq 2$, $q>2$ and $\frac{d}{p} + \frac{2}{q} <1$ and $\lambda>0$. Consider two vector fields $b,\Phi \in L_p^q$. Then there exists a unique solution of the backward parabolic system
\begin{align}\label{PDE(f)}
\partial_t u + \frac{1}{2} \Delta u + b\cdot \nabla u - \lambda u + \Phi = 0, \ \ t\in [0,T], \ \ u(T,x)=0
\end{align}
belonging to the space
$$H_{2,p}^q := L^q ([0,T], W^{2,p}(\R^d)) \cap W^{1,q} ([0,T], L^p(\R^d)),$$
i.e. there exists a constant $C>0$ depending only on $d,p,q,T,\lambda$ and $\|b\|_{L_p^q}$ such that
\begin{align}\label{ubound}
\|u\|_{H_{2,p}^q} \leq C \|\Phi\|_{L_p^q}.
\end{align}
\end{thm}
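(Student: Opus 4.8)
The plan is to reduce the backward parabolic system \eqref{PDE(f)} to a fixed-point problem around the known theory for the constant-coefficient heat equation, where the estimate \eqref{ubound} is classical. First I would treat the decoupled case $b=0$: for each component the equation $\partial_t v + \tfrac12\Delta v - \lambda v + \psi = 0$ with $v(T,\cdot)=0$ has, for $\psi\in L_p^q$, a unique solution $v\in H_{2,p}^q$ with $\|v\|_{H_{2,p}^q}\le C_0\|\psi\|_{L_p^q}$, where $C_0$ depends only on $d,p,q,T,\lambda$. This is the standard $L^p$-$L^q$ maximal regularity estimate for the (backward) heat semigroup (e.g. via the Mikhlin-type multiplier theorem of Calder\'on--Zygmund theory, or Krylov's parabolic estimates), applied componentwise since the principal part acts diagonally; I would just cite \cite{FedFlan12} or the underlying PDE references for this building block.

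Next I would set up the map $\mathcal{T}: H_{2,p}^q \to H_{2,p}^q$ sending $w$ to the solution $u$ of the decoupled problem with source $\Phi + b\cdot\nabla w$. For this to be well-defined one needs $b\cdot\nabla w\in L_p^q$ whenever $w\in H_{2,p}^q$; the key step here is a mixed-norm embedding, namely that $H_{2,p}^q$ embeds into $L^{q_1}([0,T],W^{1,p_1}(\R^d))$ for suitable $p_1,q_1$, so that by H\"older's inequality in space and time $\|b\cdot\nabla w\|_{L_p^q}$ is controlled by $\|b\|_{L_p^q}\|\nabla w\|_{L_{p_1}^{q_1}}\le C\|b\|_{L_p^q}\|w\|_{H_{2,p}^q}$, and crucially this embedding of $W^{2,p}$ data into a \emph{higher} integrability scale for the gradient gains a small power of $T$ (or, more precisely, the relevant interpolation constant can be made small) precisely because of the strict inequality $\frac{d}{p}+\frac{2}{q}<1$ with a gap. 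This is where the subcriticality hypothesis is used, and I expect this embedding/interpolation step to be the main obstacle: one must quote the right anisotropic Sobolev embedding for the space $H_{2,p}^q$ (Ladyzhenskaya--Solonnikov--Ural'tseva type, or as in \cite{KR05}) and track that the operator norm of $w\mapsto b\cdot\nabla w$ as a map into the source space, composed with $\mathcal{T}$ when $\Phi=0$, is a strict contraction for $T$ small, then patch intervals.

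Then the argument runs: on a short time interval $[T-\delta,T]$, $\mathcal{T}$ is a contraction on $H_{2,p}^q([T-\delta,T])$ with constant $<1$ (uniform in $\delta$ once $\delta$ is fixed small depending on $d,p,q,\lambda,\|b\|_{L_p^q}$), so Banach's fixed point theorem gives a unique solution there; the estimate $\|u\|_{H_{2,p}^q}\le \frac{C_0}{1-\theta}\|\Phi\|_{L_p^q}$ with $\theta<1$ follows from $u = \mathcal{T}u$ and linearity. Since $\delta$ does not depend on the terminal value, one iterates over $[T-2\delta,T-\delta]$, $[T-3\delta,T-\delta]$, etc., a finite number $N=\lceil T/\delta\rceil$ of times, at each stage feeding in the already-constructed solution as the terminal condition (which lies in the right trace space $W^{2-2/q,p}$ so the building-block theory still applies with an inhomogeneous terminal datum), and concatenating. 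Uniqueness on $[0,T]$ follows from uniqueness on each subinterval. Finally, accumulating the estimates over the $N$ intervals yields \eqref{ubound} with a constant $C$ depending only on $d,p,q,T,\lambda$ and $\|b\|_{L_p^q}$ (the dependence on $\|b\|_{L_p^q}$ entering through $\delta$ and hence $N$). I would remark that the solution is in fact a classical result of \cite{FedFlan12} and that the sketch above indicates the structure; the only genuinely delicate point to verify carefully is the contraction estimate, i.e. the mixed-norm product estimate for $b\cdot\nabla w$ together with its $T$-dependence.
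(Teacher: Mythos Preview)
The paper does not prove this theorem at all: it is stated as a result ``due to \cite{FedFlan12}'' and is simply quoted without proof, to be used as a black box in the subsequent analysis. Your sketch is therefore not comparable to any argument in the paper; rather, you have outlined the standard contraction-mapping proof that underlies the cited reference, which is a perfectly reasonable thing to do if you want to indicate why the result is plausible.

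One small correction to your sketch: the contraction constant does not come from the anisotropic embedding ``gaining a small power of $T$''. The cleaner mechanism is that, by the very embedding you mention (essentially Lemma~\ref{Ureg} in the paper), $\nabla w\in L^\infty([0,T]\times\mathbb{R}^d)$ for $w\in H_{2,p}^q$, so $\|b\cdot\nabla w\|_{L_p^q([T-\delta,T])}\le \|\nabla w\|_\infty\,\|b\|_{L_p^q([T-\delta,T])}$, and the right-hand side is small for small $\delta$ simply because $q<\infty$ and the $L^q$-in-time norm of $b$ is absolutely continuous. Alternatively (and this is closer to what \cite{FedFlan12} actually do), one first proves the estimate for $\lambda$ sufficiently large, where the resolvent factor $1/\lambda$ provides the contraction globally on $[0,T]$, and then recovers arbitrary $\lambda>0$ by absorbing the difference $(\lambda-\lambda_0)u$ into the source term. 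Either route closes the argument; your iteration-in-time version is also fine once the source of smallness is identified correctly.
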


The following result is a part of \cite[Lemma 10.2]{KR05} that gives us some properties on the regularity of $u \in H_{2,p}^q$ that we will need for the proof of Theorem \ref{thmainres1}.

\begin{lemm}\label{Ureg}
Let $p,q\in (1,\infty)$ such that $\frac{d}{p}+\frac{2}{q}<1$ and $u \in H_{2,p}^q$, then $\nabla u$ is H\"{o}lder continuous in $(t,x)\in [0,T]\times \R^d$, namely for any $\varepsilon\in (0,1)$ satisfying
$$\varepsilon + \frac{d}{p}+\frac{2}{q}<1$$
there exists a constant $C>0$ depending only on $p,q$ and $\varepsilon$ such that for all $s,t\in [0,T]$ and $x,y\in \R^d$, $x\neq y$
\begin{align}\label{Uholder1}
\|\nabla u(t,x) - \nabla u(s,x)\| \leq C |t-s|^{\varepsilon/2}\|\nabla u\|_{H_{2,p}^q}^{1-1/q - \varepsilon/2}\|\partial_t u \|_{L_p^q}^{1/q+\varepsilon/2},
\end{align}
\begin{align}\label{Uholder2}
\|\nabla u(t,x)\|+\frac{\|\nabla u(t,x)- \nabla u(t,y)\|}{|x-y|^{\varepsilon}}\leq C T^{-1/q}\left(\|u\|_{H_{2,p}^q}+T \|\partial_t u\|_{L_p^q} \right),
\end{align}
where $\|\cdot\|$ denotes any norm in $\R^{d\times d}$
\end{lemm}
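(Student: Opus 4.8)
The plan is to recognise Lemma~\ref{Ureg} as an instance of the anisotropic parabolic Sobolev embedding for the space $H_{2,p}^q = L^q([0,T],W^{2,p}(\R^d)) \cap W^{1,q}([0,T],L^p(\R^d))$: under the standing hypothesis $\varepsilon + \frac{d}{p} + \frac{2}{q} < 1$, a function $u \in H_{2,p}^q$ has a gradient $\nabla u$ which is $\varepsilon$-H\"older in the space variable and $\frac{\varepsilon}{2}$-H\"older in the time variable, with the indicated norm bounds. I would establish the purely spatial estimate \eqref{Uholder2} first and then bootstrap it, together with the time regularity coming from $\partial_t u \in L_p^q$, to the mixed estimate \eqref{Uholder1}.

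For \eqref{Uholder2}, the point is that $H_{2,p}^q$ is continuously embedded into $C([0,T];(L^p(\R^d),W^{2,p}(\R^d))_{1-1/q,q}) = C([0,T];B^{2-2/q}_{p,q}(\R^d))$ by the standard trace/interpolation theory for vector-valued Sobolev spaces; hence for each fixed $t$ one gets $u(t,\cdot)\in B^{2-2/q}_{p,q}(\R^d)$, so $\nabla u(t,\cdot)\in B^{1-2/q}_{p,q}(\R^d)$, with Besov norm bounded by $\|u\|_{H_{2,p}^q}$ uniformly in $t$. Since $\varepsilon + \frac{d}{p} + \frac{2}{q} < 1$ forces $1 - \frac{2}{q} - \frac{d}{p} > \varepsilon > 0$, the Besov--H\"older embedding $B^{1-2/q}_{p,q}(\R^d)\hookrightarrow C^{\varepsilon}_b(\R^d)$ applies and delivers both the sup-bound and the $\varepsilon$-H\"older seminorm bound for $\nabla u(t,\cdot)$. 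The explicit $T^{-1/q}$ prefactor and the combination $\|u\|_{H_{2,p}^q} + T\|\partial_t u\|_{L_p^q}$ then come out by tracking the homogeneity of the trace and embedding constants under the parabolic scaling $(t,x)\mapsto(Tt,T^{1/2}x)$.

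For \eqref{Uholder1}, fix $x$ and $s<t$. From $\partial_t u \in L^q([0,T],L^p(\R^d))$ and H\"older's inequality in time, $u$ is H\"older-$(1-\tfrac1q)$ as an $L^p(\R^d)$-valued function: $\|u(t,\cdot)-u(s,\cdot)\|_{L^p} \le |t-s|^{1-1/q}\|\partial_t u\|_{L_p^q}$. Now interpolate this with the spatial regularity of $\nabla u$ just obtained. Introducing a spatial mollification $u_\rho(\tau,\cdot)=u(\tau,\cdot)*\phi_\rho$ at scale $\rho>0$, split
\[
\nabla u(t,x)-\nabla u(s,x) = \big(\nabla u(t,x)-\nabla u_\rho(t,x)\big) + \big(\nabla u_\rho(t,x)-\nabla u_\rho(s,x)\big) + \big(\nabla u_\rho(s,x)-\nabla u(s,x)\big).
\]
The first and third terms are bounded by $C\rho^{\varepsilon}$ times the spatial ($C^{\varepsilon}$, equivalently Besov) norm of $\nabla u$, while the middle term is $\le \|\nabla\phi_\rho\|_{L^{p'}}\|u(t,\cdot)-u(s,\cdot)\|_{L^p} \le C\rho^{-1-d/p}|t-s|^{1-1/q}\|\partial_t u\|_{L_p^q}$. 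Optimising over $\rho$ balances a power of $\|\nabla u\|$ against a complementary power of $\|\partial_t u\|_{L_p^q}$, and using $|t-s|\le T$ to trade the resulting (larger) power of $|t-s|$ down to $\varepsilon/2$ yields precisely the form \eqref{Uholder1} with exponents $1-\tfrac1q-\tfrac{\varepsilon}{2}$ and $\tfrac1q+\tfrac{\varepsilon}{2}$.

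The main obstacle is not the existence of such estimates — they follow once $u$ is placed on the right parabolic Besov/trace scale — but the exact bookkeeping of the exponents and of the $T$-dependence in \eqref{Uholder1}--\eqref{Uholder2}: one must simultaneously track the homogeneity (under parabolic dilation) of the trace constant, of the Besov--H\"older embedding constant, and of the interpolation weights, and choose the mollification scale so that the powers come out exactly as stated. This careful accounting is carried out in \cite[Lemma~10.2]{KR05}, from which the statement is extracted.
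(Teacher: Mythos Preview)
The paper does not prove this lemma at all: it is stated without proof and attributed to \cite[Lemma~10.2]{KR05}. Your proposal is consistent with this --- you sketch the standard parabolic Sobolev/Besov embedding and mollification-interpolation argument and then cite the same source --- so your approach matches the paper's (namely, defer to Krylov--R\"ockner), with the added value of an outline of the underlying proof.
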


Our method to construct strong solutions is actually motivated by the following observation in \cite{LP04} and \cite{MBP06} (see also \cite{MBP09}).

\begin{prop}
\label{explicit} Suppose that the drift coefficient $b:[0,T]\times \mathbb{R}%
^{d}\mathbb{\longrightarrow R}^{d}$ in (\ref{eqmain1}) is bounded and
Lipschitz continuous. Then the unique strong solution $X_{t}=(X^1_{t},...,X^d_{t})$ of (\ref{eqmain1}) has the explicit representation 
\begin{equation}
\varphi \left( t,X_{t}^{i}(\omega )\right) =E_{\widetilde{\mu }}\left[
\varphi \left( t,\widetilde{B}_{t}^{i}(\widetilde{\omega })\right) 
\mathcal{E}_{T}^{\diamond }(b)\right]   \label{exprep}
\end{equation}%
for all $\varphi :[0,T]\times \mathbb{R\longrightarrow R}$ such that $%
\varphi \left( t,B_{t}^{i}\right) \in L^{2}(\Omega )$ for all $0\leq t\leq T,$
$i=1,\ldots,d,$. The random element $\mathcal{E}_T^{\diamond }(b)$ is given by%
\begin{align}
\mathcal{E}_{T}^{\diamond }(b)(\omega ,\widetilde{\omega })
:= &  \exp^{\diamond }\Big( \sum_{j=1}^{d}\int_{0}^{T}\left( W_{s}^{j}(\omega
)+b^{j}(s,\widetilde{B}_{s}(\widetilde{\omega }))\right) d\widetilde{B}%
_{s}^{j}(\widetilde{\omega })   \notag \\
&  -\frac{1}{2}\int_{0}^{T}\left( W_{s}^{j}(\omega )+b^{j}(s,%
\widetilde{B}_{s}(\widetilde{\omega }))\right) ^{\diamond 2}ds\Big) .
\label{Doleanswick}
\end{align}%
Here $\left( \widetilde{\Omega },\widetilde{\mathcal{F}},\widetilde{\mu }%
\right) ,\left( \widetilde{B}_{t}\right) _{t\geq 0}$ is a copy of the
quadruple $\left( \Omega ,\mathcal{F},\mu \right) ,$ $\left( B_{t}\right)
_{t\geq 0}$ in (\ref{Stochbasis})$.$ Further $E_{\widetilde{\mu }}$ denotes
a Pettis integral of random elements $\Phi :\widetilde{\Omega }$ $%
\longrightarrow \left( \mathcal{S}\right) ^{\ast }$ with respect to the
measure $\widetilde{\mu }.$ The Wick product $\diamond $ in the Wick
exponential of (\ref{Doleanswick}) (see \ref{Sprop}) is taken with respect to $\mu $ and $%
W_{t}^{j}$ is the white noise of $B_{t}^{j}$ in the Hida space $\left( 
\mathcal{S}\right) ^{\ast }$ (see (\ref{whitenoise}))$.$ The stochastic
integrals $\int_{0}^{T}\phi (t,\widetilde{\omega })d\widetilde{B}_{s}^{j}(%
\widetilde{\omega })$ in (\ref{Doleanswick}) are defined for predictable
integrands $\phi $ with values in the conuclear space $\left( \mathcal{S}%
\right) ^{\ast }$. See e.g. \cite{KX95} for definitions. The other
integral type in (\ref{Doleanswick}) is to be understood in the sense of
Pettis.
\end{prop}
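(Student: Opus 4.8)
The plan is to verify \eqref{exprep} by computing the $S$-transform (the white-noise symbol) of both sides in the variable $\omega$ and checking that they agree; since the $S$-transform is injective on $(\mathcal{S})^{\ast}$ and the left-hand side already lies in $L^{2}(\Omega)\subset(\mathcal{S})^{\ast}$, this pins the right-hand side down as an $L^{2}(\Omega)$ random variable equal to $\varphi(t,X_{t}^{i})$. We may assume $x=0$ (the general case follows by translating $b$ and $\varphi$), so that the process $\widetilde{B}$ with $\widetilde{B}_{0}=0$ appearing in \eqref{Doleanswick} is consistent with the initial condition.

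The first task is to check that every object is well defined. For $\widetilde\mu$-a.e.\ $\widetilde\omega$ I would show that the exponent in \eqref{Doleanswick} belongs to a suitable Kondratiev space, so that $\mathcal{E}_{T}^{\diamond}(b)(\cdot,\widetilde\omega)$ is a genuine Hida distribution; using linearity of $S$, the identity $SW_{s}^{j}=\phi^{j}(s)$, the fact that $S$ carries Wick products and Wick exponentials into ordinary products and exponentials, and the behaviour under $S$ of the $(\mathcal{S})^{\ast}$-valued integral $\int_{0}^{T}W_{s}^{j}\,d\widetilde B_{s}^{j}$ (cf.\ \cite{KX95}), its $S$-transform in $\omega$ at a test function $\phi$ is the ordinary Dol\'eans--Dade exponential $\mathcal{E}_{T}\big((\phi+b)(\cdot,\widetilde B)\big)$. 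As $b$ is bounded this quantity is $\widetilde\mu$-integrable against $|\varphi(t,\widetilde B_{t}^{i})|$, and the standard Pettis-integrability criterion in $(\mathcal{S})^{\ast}$ then shows that the right-hand side of \eqref{exprep} is a bona fide element of $(\mathcal{S})^{\ast}$ whose $S$-transform at $\phi$ equals $E_{\widetilde\mu}\big[\varphi(t,\widetilde B_{t}^{i})\,\mathcal{E}_{T}\big((\phi+b)(\cdot,\widetilde B)\big)\big]$.

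Next I would identify both $S$-transforms through two changes of measure. On the $\widetilde\omega$-side, Novikov's condition holds because $\phi+b(\cdot,\widetilde B_{\cdot})$ is bounded, so $d\widetilde\mu^{\ast}:=\mathcal{E}_{T}\big((\phi+b)(\cdot,\widetilde B)\big)\,d\widetilde\mu$ is a probability measure under which $\widetilde B$ is a weak solution of $dZ_{t}=\big(b(t,Z_{t})+\phi(t)\big)\,dt+d\widetilde W_{t}$, $Z_{0}=0$, with $\widetilde W$ a Brownian motion; hence the $S$-transform of the right-hand side of \eqref{exprep} equals $E[\varphi(t,Z_{t}^{i})]$ for $Z$ the solution of this SDE. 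On the $\omega$-side, by the definition of the $S$-transform together with the Cameron--Martin theorem, $S\big(\varphi(t,X_{t}^{i})\big)(\phi)=E_{\mu}\big[\varphi(t,X_{t}^{i})\exp^{\diamond}\big(\int_{0}^{T}\phi\,dB\big)\big]=E\big[\varphi(t,(X_{t}^{\phi})^{i})\big]$, where $X^{\phi}$ is the strong solution of $dX_{t}^{\phi}=\big(b(t,X_{t}^{\phi})+\phi(t)\big)\,dt+dB_{t}$. Uniqueness in law for SDEs with bounded (here Lipschitz) drift makes $X^{\phi}$ and $Z$ equal in law, so the two $S$-transforms coincide for every $\phi$; injectivity of $S$ on $(\mathcal{S})^{\ast}$ then yields \eqref{exprep} in $(\mathcal{S})^{\ast}$, and, the left side being in $L^{2}(\Omega)$, it holds in $L^{2}(\Omega)$.

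The conceptual core — matching a Girsanov transform on the auxiliary space with a Cameron--Martin shift on the original one, both read off from the $S$-transform — is short; the part I expect to be the main obstacle is the white-noise bookkeeping of the second paragraph: showing that the Wick exponential in \eqref{Doleanswick} genuinely lives in an appropriate Kondratiev/Hida distribution space for a.e.\ $\widetilde\omega$, that the $(\mathcal{S})^{\ast}$-valued stochastic integrals are meaningful and commute with $S$, and that the integrand is Pettis integrable so that $S$ and $E_{\widetilde\mu}$ may be interchanged --- precisely the technical groundwork of \cite{LP04,MBP06,MBP09}.
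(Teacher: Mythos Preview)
Your proposal is correct and is exactly the argument used in the references \cite{LP04,MBP06} that the paper cites for this proposition; the paper itself does not supply an independent proof. The three ingredients you isolate --- the computation $S\big(\mathcal{E}_{T}^{\diamond}(b)\big)(\phi)=\mathcal{E}_{T}\big(\int_{0}^{T}(\phi+b)(s,\widetilde B_{s})\,d\widetilde B_{s}\big)$ via $S(W_{s}^{j})=\phi^{j}(s)$, the Girsanov identification of $\widetilde B$ as a weak solution with drift $b+\phi$ under $\widetilde\mu^{\ast}$, and the Cameron--Martin shift turning $X$ into $X^{\phi}$ on the $\omega$-side, followed by uniqueness in law --- match precisely, and your assessment that the bulk of the work lies in the $(\mathcal{S})^{\ast}$-bookkeeping (Pettis integrability, interchange of $S$ and $E_{\widetilde\mu}$) is accurate.
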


\begin{rem}
\label{Remarkexp} Let $0=t_{1}^{n}<t_{2}^{n}<\ldots<t_{m_{n}}^{n}=T$\ be a
sequence of partitions of the interval $[0,T]$\ with $\max_{i=1}^{m_{n}-1}%
\left\vert t_{i+1}^{n}-t_{i}^{n}\right\vert \longrightarrow 0$\ . Then the
stochastic integral of the white noise $W^{j}$\ can be approximated as
follows:%
\begin{equation*}
\int_{0}^{T}W_{s}^{j}(\omega )d\widetilde{B}_{s}^{j}(\widetilde{\omega }%
)=\lim_{n\longrightarrow \infty }\sum_{i=1}^{m_{n}}(\widetilde{B}%
_{t_{i+1}^{n}}^{j}(\widetilde{\omega })-\widetilde{B}_{t_{i}^{n}}^{j}(%
\widetilde{\omega }))W_{t_{i}^{n}}^{j}(\omega )
\end{equation*}%
in $L^{2}(\lambda \times \widetilde{\mu };\left( \mathcal{S}\right) ^{\ast
}).$\ For more information about stochastic integration on conuclear spaces
the reader is referred to \cite{KX95}.
\end{rem}

In the sequel we shall use the notation $Y_{t}^{i,b}$ for the expectation on the right hand side of (\ref{exprep}) for $\varphi (t,x)=x$, that is%
\begin{equation*}
Y_{t}^{i,b}:=E_{\widetilde{\mu }}\left[ \widetilde{B}_{t}^{(i)}\mathcal{E}%
_{T}^{\diamond }(b)\right] 
\end{equation*}%
for $i=1,\ldots,d.$ We set
\begin{equation}
Y_{t}^{b}=\left( Y_{t}^{1,b},\ldots,Y_{t}^{d,b}\right)\,.   \label{equation}
\end{equation}%

The form of Formula \eqref{exprep} in Proposition \ref{explicit} actually gives rise to the conjecture that the expectation on the right hand side of $Y^b_t$ in \eqref{equation} may also define solutions of \eqref{eqmain1} for drift coefficients $b$ lying in $L_p^q$.
\ \

Our method to construct strong solutions to SDE (\ref{eqmain1}) which are Malliavin differentiable is essentially based on three steps.

\begin{itemize}
\item First, we consider a sequence of compactly supported smooth functions $b_n:[0,T]\times \R^d \rightarrow \R^d$, $n\geq 0$ such that $b_0:=b$ and $\sup_{n\geq 0} \|b_n\|_{L_p^q} <\infty$ approximating $b \in L_p^q$ a.e. with respect to the Lebesgue measure and then we prove that the sequence of strong solutions $X_t^n = Y_t^{b_n}$, $n \geq 1$, is relatively compact in $L^2(\Omega;\mathbb{R}^d)$ (Corollary \ref{relcompact}) for every $t\in [0,T]$. The main tool to verify compactness is the bound in Lemma \ref{estimate1} in connection with a compactness criterion in terms of Malliavin derivatives obtained in \cite{DPMN92} (see \mbox{Appendix \ref{App1}}). This step is one of the main contributions of this paper.

\item Secondly, given a merely measurable drift coefficient $b$ in the space $L_p^q$, we show that \mbox{$Y^b_t$}, $t\in [0,T]$ is a generalized process in the Hida distribution space and we invoke the $S$-transform (\ref{Stransform}) to prove that for a given sequence of a.e. approximating, smooth coefficients $b_n$ with compact support such that $\sup_{n\geq 0} \|b_n\|_{L_p^q}$, a subsequence of the corresponding strong solutions $X_t^{n_j} = Y_t^{b_{n_j}}$ fulfils 
$$
Y^{b_{n_j}}_t \rightarrow Y^b_t
$$
in $L^2(\Omega;\mathbb{R}^d)$ for $0 \leq t \leq T$ (Lemma \ref{squareint}).

\item Finally, using a certain transformation property for $Y^b_t$ (Lemma \ref{translemma}) we directly show that $Y^b_t$ is a Maaliavin differentiable solution to \eqref{eqmain1}.
\end{itemize}

We turn now to the first step of our procedure. The successful completion of the first step relies on the following essential lemma:

\begin{lemm}\label{estimate1}
Let $b_n:[0,T]\times \R^d \rightarrow \R^d$, $n\geq 1$ be a sequence of functions in $\mathcal{C}_0^{\infty}(\R^d)$ (space of infinitely often differentiable functions with compact support) approximating $b\in L_p^q$ a.e. such that $b_0:=b$ and $\sup_{n\geq 0} \|b_n\|_{L_p^q} < \infty$. Denote by $X_t^{n,x}$ the strong solution of SDE (\ref{eqmain1}) with drift coefficient $b_n$ for each $n\geq 0$. Then for every $t\in[0,T]$, $0\leq r' \leq r\leq t$ there exist a $0<\delta<1$ and a function $C:\R\rightarrow [0,\infty)$ depending only on $p,q,d,\delta$ and $T$ such that
\begin{align}\label{statement1}
E\left[\| D_{r'} X_t^{n,x} - D_r X_t^{n,x}\|^2\right] \leq  C(\|b_n\|_{L^{p,q}})|r'-r|^{\delta}
\end{align}
with
$$\sup_{n\geq 1} C(\|b_n\|_{L_p^q}) < \infty.$$
Here $\|\cdot \|$ denotes any norm in $\R^{d\times d}$.

Moreover,
\begin{align}\label{statement2}
\sup_{n\geq 1}\sup_{r\in [0,T]} E\left[\|D_r X_t^{n,x}\|^p\right] < \infty
\end{align}
for all $p\geq 2$.
\end{lemm}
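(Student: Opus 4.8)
The plan is to obtain the Malliavin derivative of $X_t^{n,x}$ explicitly and then extract the Hölder-type bound \eqref{statement1} and the $L^p$-bound \eqref{statement2} from integrability estimates on the drift, using the PDE-regularity results from Theorem \ref{theoremU} and Lemma \ref{Ureg}. First I would recall that since $b_n\in\mathcal{C}_0^\infty$, the solution $X_t^{n,x}$ is a classical (e.g. Kunita-type) flow and is Malliavin differentiable, with $D_r X_t^{n,x}$ solving the linear equation
\begin{align*}
D_r X_t^{n,x} = \mathcal{I}_d + \int_r^t \nabla b_n(s,X_s^{n,x})\, D_r X_s^{n,x}\, ds, \qquad 0\le r\le t,
\end{align*}
so that $D_r X_t^{n,x} = \exp\!\left(\int_r^t \nabla b_n(s,X_s^{n,x})\,ds\right)$ in the one-dimensional heuristic and, in general, is given by the matrix exponential/Peano series of $s\mapsto\nabla b_n(s,X_s^{n,x})$ on $[r,t]$. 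Because $\nabla b_n$ is not uniformly controlled as $n\to\infty$, the key move is the Zvonkin-type transform: introduce $U_n$ solving \eqref{koleq} with $b=b_n$ and $\Phi=b_n$, put $Y_t^{n,x}=X_t^{n,x}+U_n(t,X_t^{n,x})$. By Lemma \ref{Ureg}, for $\lambda$ large enough $\nabla U_n$ has sup-norm $<1/2$ uniformly in $n$ (since the bound in \eqref{Uholder2} depends on $b_n$ only through $\|b_n\|_{L_p^q}$, which is uniformly bounded, and can be made small by choosing $\lambda$ large — this is the standard consequence of \eqref{ubound}), so $x\mapsto x+U_n(t,x)$ is a $C^1$-diffeomorphism with inverse bounded derivative, and $Y_t^{n,x}$ solves the SDE with coefficients $\lambda U_n(t,\varphi_n(t,Y_t^{n,x}))$ and $\mathcal{I}_d+\nabla U_n(t,\varphi_n(t,Y_t^{n,x}))$, both of which are bounded and Hölder-continuous in $x$ uniformly in $n$.

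Next I would work with $D_r Y_t^{n,x}$, which satisfies a linear SDE
\begin{align*}
D_r Y_t^{n,x} = \bigl(\mathcal{I}_d+\nabla U_n(r,X_r^{n,x})\bigr) + \int_r^t \Theta_n(s)\, D_r Y_s^{n,x}\, ds + \int_r^t \Xi_n(s)\, D_r Y_s^{n,x}\, dB_s,
\end{align*}
where $\Theta_n,\Xi_n$ are expressed through $\nabla U_n$, $\nabla^2 U_n$ and $\nabla\varphi_n$ evaluated along the path. The point is that $\|\nabla U_n\|_\infty\le 1/2$ and $\|\nabla U_n(\cdot,x)-\nabla U_n(\cdot,y)\|\le C|x-y|^\varepsilon$ uniformly in $n$, while $\nabla^2 U_n$ appears only in $L_p^q$ through \eqref{ubound}; so the drift/diffusion coefficients of the $D_r Y^{n,x}$-equation are, modulo terms in $L_p^q$, uniformly bounded. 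A Grönwall/BDG argument then gives $\sup_{n}\sup_{r\le T} E[\|D_r Y_t^{n,x}\|^p]<\infty$ for all $p\ge 2$, where the contribution of the $\nabla^2 U_n$-term is absorbed using Khasminskii-type estimates (exponential integrability of $\int_0^T |\nabla^2 U_n(s,X_s^{n,x})|^p\,ds$) which rely precisely on the condition $\frac{d}{p}+\frac{2}{q}<1$ via the heat-kernel/Krylov estimate. Transferring back through $D_r X_t^{n,x}=\nabla\varphi_n(t,Y_t^{n,x})\,D_r Y_t^{n,x}$ and using $\|\nabla\varphi_n\|_\infty\le 2$ yields \eqref{statement2}.

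For \eqref{statement1} I would estimate $D_{r'}X_t^{n,x}-D_r X_t^{n,x}$ by splitting at the initial conditions and along the trajectory: the difference of the "initial" matrices $\nabla\varphi_n(\cdot)(\mathcal{I}_d+\nabla U_n(r',X_{r'}^{n,x}))$ versus at time $r$ is controlled by the spatial $\varepsilon$-Hölder bound \eqref{Uholder2} on $\nabla U_n$ together with $E[|X_{r'}^{n,x}-X_r^{n,x}|^{p}]\le C|r'-r|^{p/2}$ (uniform in $n$, since the drift is in $L_p^q$ and Krylov's estimate controls $\int_{r'}^r|b_n(s,X_s^{n,x})|\,ds$), giving a term of order $|r'-r|^{\varepsilon/2}$; the difference of the integral parts is handled by another Grönwall estimate using the $L^p$-bound \eqref{statement2} just proved, plus the time-Hölder bound \eqref{Uholder1} for $\nabla U_n$. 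Taking $\delta\in(0,\varepsilon)$ small and collecting terms gives \eqref{statement1} with a constant depending on $\|b_n\|_{L_p^q}$ through the (uniformly bounded) constants of Theorem \ref{theoremU} and Lemma \ref{Ureg} only, hence $\sup_n C(\|b_n\|_{L_p^q})<\infty$. The main obstacle is the second-derivative term $\nabla^2 U_n$: it is only $L_p^q$ in $(t,x)$, not bounded, so every Grönwall step must be run with a random, path-dependent coefficient, and the uniform-in-$n$ control of $E[\exp(c\int_0^T|\nabla^2 U_n(s,X_s^{n,x})|\,ds)]$-type quantities via Khasminskii's lemma and Krylov's inequality — which is exactly where $\frac{d}{p}+\frac{2}{q}<1$ is indispensable — is the technical heart of the argument.
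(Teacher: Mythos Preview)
Your proposal is correct and follows essentially the same strategy as the paper: Zvonkin transform to $\tilde X_t^{n,x}=X_t^{n,x}+U_n(t,X_t^{n,x})$, a linear SDE for $D_r \tilde X_t^{n,x}$ whose coefficients involve the uniformly bounded and H\"older $\nabla U_n$ and the merely $L_p^q$-controlled $\nabla^2 U_n$, the H\"older bounds \eqref{Uholder1}--\eqref{Uholder2} for the initial-condition increment, and Khasminskii/Krylov-type exponential integrability to absorb the $\nabla^2 U_n$-term. The paper implements your ``Gr\"onwall with random coefficient'' step concretely by applying It\^o's formula to $e^{-V_t^n}\|Z_{r',t}^n-Z_{r,t}^n\|^\alpha$ with $V_t^n=\int_r^t\|\nabla^2 U_n(s,X_s^{n,x})\,\nabla\gamma_{s,n}^{-1}(\tilde X_s^{n,x})\|^2\,ds$, then removing the weight via Cauchy--Schwarz and the uniform bound $\sup_n E[e^{\alpha V_T^n}]<\infty$.
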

\begin{proof}
Throughout the proof we will denote by $C_{\ast}:\R \rightarrow [0,\infty)$ any function depending on the parameters $\ast$. We will also use the symbol $\lesssim$ to denote \emph{less or equal} up to a positive real constant independent of $n$.

We will prove the above estimates by considering the solution of the associated PDE presented in (\ref{koleq}) with $b_n$, $n\geq 0$ in place of $b$ which we denote by $U_n$, $n\geq 0$ and then using the results introduced at the beginning of this section on the regularity of its solution.

First, let us introduce a new process that will be useful for this purpose. Consider for each $n\geq 0$ and $t\in [0,T]$ the functions $\gamma_{t,n} :\R^d \rightarrow \R^d$ defined as $\gamma_{t,n} (x) = x+U_n(t,x)$. It turns out, see \cite[Lemma 3.5]{FedFlan12}, that the functions $\gamma_{t,n}$, $t\in [0,T]$, $n\geq 0$ define a family of $C^1$-diffeomorphisms on $\R^d$. Furthermore, consider the auxiliary process $\tilde{X}_t^{n,x} := \gamma_{t,n}( X_t^{n,x})$, $t\in [0,T]$, $n\geq 1$. One checks using It\^{o}'s formula and (\ref{koleq}) that $\tilde{X}_t^{n,x}$ satisfies the following SDE
\begin{align}\label{SDEY}
d\tilde{X}_t^{n,x} = \lambda U_n(t, \gamma_{t,n}^{-1}( \tilde{X}_t^{n,x}))dt + \left( \mathcal{I}_d + \nabla U_n(t,\gamma_{t,n}^{-1} (\tilde{X}_t^{n,x})) \right) dB_t, \ \ \tilde{X}_0^{n,x} = x+U_n(0,x)
\end{align}
which is equivalent to SDE (\ref{eqmain1}) if we replace $b$ by $b_n$, $n\geq 1$. Using the chain rule for Malliavin derivatives (see e.g. \cite{Nua10}) we see that for $0\leq r\leq t$,
$$D_r \tilde{X}_t^{n,x} = \nabla\gamma_{t,n}( X_t^{n,x}) D_r X_t^{n,x}.$$
Because of Lemma \ref{lemma3} it suffices to prove the estimates (\ref{statement1}) and (\ref{statement2}) for the process $\tilde{X}_t^{n,x}$.

Since $b_n$ are now smooth we have that (\ref{SDEY}) admits a unique strong solution which takes the form
$$\tilde{X}_t^{n,x} =x+ U_n (0,x) + \lambda\int_0^t U_n(s, \gamma_{s,n}^{-1}(\tilde{X}_s^{n,x}))ds + \int_0^t \left( \mathcal{I}_d + \nabla U_n(s,\gamma_{s,n}^{-1} ( \tilde{X}_s^{n,x})) \right) dB_s. $$

Then the Malliavin derivative of $\tilde{X}_t^{n,x}$ for $0\leq r\leq t$, which exists (see e.g. \cite{Nua10}), is
\begin{align*}
D_r \tilde{X}_t^{n,x} =& \hspace{1mm} \mathcal{I}_d+ \nabla U_n(r,\gamma_{r,n}^{-1} ( \tilde{X}_r^{n,x}))\\
&+ \lambda\int_r^t \nabla U_n(s, \gamma_{s,n}^{-1}(\tilde{X}_s^{n,x}))\nabla \gamma_{s,n}^{-1}(\tilde{X}_s^{n,x}) D_r\tilde{X}_s^{n,x} ds\\
&+ \int_r^t \nabla^2 U_n(s, \gamma_{s,n}^{-1}(\tilde{X}_s^{n,x})) \nabla \gamma_{s,n}^{-1}(\tilde{X}_s^{n,x}) D_r\tilde{X}_s^{n,x}dB_s.
\end{align*}

Denote for simplicity, $Z_{r,t}^n := D_r \tilde{X}_t^{n,x}$. Then for $r'<r$ we can write
\begin{align*}
Z_{r',t}^n - Z_{r,t}^n &= \nabla U_n(r',\gamma_{r',n}^{-1}(\tilde{X}_{r'}^{n,x}))-\nabla U_n(r,\gamma_{r,n}^{-1}(\tilde{X}_{r}^{n,x}))\\
&+ \lambda \int_{r'}^r \nabla U_n(s, \gamma_{s,n}^{-1}(\tilde{X}_{s}^{n,x}))  \nabla \gamma_{s,n}^{-1}(\tilde{X}_{s}^{n,x}) Z_{r',s}^n ds\\
&+ \lambda \int_{r}^t \nabla U_n(s, \gamma_{s,n}^{-1}(\tilde{X}_{s}^{n,x})) \nabla \gamma_{s,n}^{-1}(\tilde{X}_{s}^{n,x}) \left(Z_{r',s}^n-Z_{r,s}^n\right)ds\\
&+ \int_{r'}^r \nabla^2 U_n(s, \gamma_{s,n}^{-1}(\tilde{X}_{s}^{n,x})) \nabla \gamma_{s,n}^{-1}(\tilde{X}_{s}^{n,x}) Z_{r',s}^n dB_s\\
&+ \int_{r}^t \nabla^2 U_n(s, \gamma_{s,n}^{-1}(\tilde{X}_{s}^{n,x})) \nabla \gamma_{s,n}^{-1}(\tilde{X}_{s}^{n,x}) \left(Z_{r',s}^n-Z_{r,s}^n\right)dB_s\\
&= Z_{r',r}^n - Z_{r,r}^n\\
&+ \lambda \int_{r}^t \nabla U_n(s, \gamma_{s,n}^{-1}(\tilde{X}_{s}^{n,x})) \nabla \gamma_{s,n}^{-1}(\tilde{X}_{s}^{n,x}) \left(Z_{r',s}^n-Z_{r,s}^n\right)ds\\
&+ \int_{r}^t \nabla^2 U_n(s, \gamma_{s,n}^{-1}(\tilde{X}_{s}^{n,x})) \nabla \gamma_{s,n}^{-1}(\tilde{X}_{s}^{n,x}) \left(Z_{r',s}^n-Z_{r,s}^n\right)dB_s.
\end{align*}

By dint of Lemma \ref{lemma2} we know that $\nabla U_n$ is bounded uniformly in $n$ and Lemma \ref{lemma1} shows that $\nabla^2 U_n$ belongs, at least, to $L_p^q$ uniformly in $n$. This implies that the stochastic integral in the expression for $Z_{r',t}^n - Z_{r,t}^n$ is a \emph{true} martingale, which we here denote my $M_t^n$. As a result, since the initial condition $Z_{r',r}^n - Z_{r,r}^n$  is $\mathcal{F}_r$-measurable for each $n\geq 0$, for a given $\alpha\geq 2$, by It\^{o}'s formula we have

\begin{align}\label{ItoZ1}
\begin{split}
\|Z_{r',t}^n - Z_{r,t}^n\|^\alpha &\lesssim  \|Z_{r',r}^n - Z_{r,r}^n \|^{\alpha} + \int_r^t  \|Z_{r',s}^n - Z_{r,s}^n \|^{\alpha}ds + M_t^n \\
&+ \int_r^t \|Z_{r',s}^n - Z_{r,s}^n \|^{\alpha-2}\tr \Bigg[ \left(\nabla^2 U_n(s, \gamma_{s,n}^{-1}(\tilde{X}_{s}^{n,x})) \nabla \gamma_{s,n}^{-1}(\tilde{X}_{s}^{n,x}) (Z_{r',s}^n - Z_{r,s}^n)\right)\\
&\times \left( \nabla^2 U_n(s, \gamma_{s,n}^{-1}(\tilde{X}_{s}^{n,x})) \nabla \gamma_{s,n}^{-1}(\tilde{X}_{s}^{n,x})(Z_{r',s}^n - Z_{r,s}^n) \right)^\ast \Bigg] ds
\end{split}
\end{align}
where here $\tr$ stands for the trace and $\ast$ for the transposition of matrices.

We proceed then using the fact that the trace of the matrix appearing in \eqref{ItoZ1} can be bounded by a constant $C_{p,d}$ independent of $n$, times $\|Z_{r',s}^n - Z_{r,s}^n\|^2 \|\nabla^2 U_n(s, \gamma_{s,n}^{-1}(\tilde{X}_{s}^{n,x})) \nabla \gamma_{s,n}^{-1}(\tilde{X}_{s}^{n,x})\|^2$.

Altogether,

\begin{align}\label{middleItoZ}
\begin{split}
\|Z_{r',t}^n - Z_{r,t}^n\|^\alpha &\lesssim \|Z_{r',r}^n - Z_{r,r}^n \|^{\alpha} + \int_r^t  \|Z_{r',s}^n - Z_{r,s}^n \|^{\alpha}ds + M_t^n \\
&+ \int_r^t \|Z_{r',s}^n - Z_{r,s}^n \|^{\alpha}\|\nabla^2 U_n(s, \gamma_{s,n}^{-1}(\tilde{X}_{s}^{n,x})) \nabla \gamma_{s,n}^{-1}(\tilde{X}_{s}^{n,x})\|^2 ds
\end{split}
\end{align}

Consider thus the process
\begin{align}\label{processV}
V_t^n := \int_r^t \|\nabla^2 U_n(s, \gamma_{s,n}^{-1}(\tilde{X}_{s}^{n,x})) \nabla \gamma_{s,n}^{-1}(\tilde{X}_{s}^{n,x})\|^2 ds.
\end{align}
The process $V_t^n$ is a continuous non-decreasing and $\{\mathcal{F}_t\}_{t\in[0,T]}$-adapted process such that $V_r^n =0$. Then Lemma \ref{lemma1} in connection with Theorem \ref{theoremU} we have that $\displaystyle \sup_{n\geq 0} E[V_t^n] <\infty$.

Then It\^{o}'s formula yields

\begin{align}\label{diff1}
e^{-V_t^n} \|Z_{r',t}^n - Z_{r,t}^n\|^\alpha  \lesssim \|Z_{r',r}^n - Z_{r,r}^n\|^\alpha + \int_r^t e^{-V_s^n} \|Z_{r',s}^n - Z_{r,s}^n\|^\alpha ds + \int_r^t e^{-V_s^n} dM_s.
\end{align}

Then taking expectation
\begin{align}\label{beforeGronwall}
E\left[ e^{-V_t^n} \| Z_{r',t}^n - Z_{r,t}^n \|^\alpha \right]   \lesssim  E\left[ \|Z_{r',r}^n - Z_{r,r}^n\|^\alpha\right]+ \int_r^t E\left[e^{-V_s^n} \|Z_{r',s}^n - Z_{r,s}^n\|^\alpha\right] ds.
\end{align}

Then Gronwall's inequality gives
\begin{align}\label{gronwall}
E\left[ e^{-V_t^n} \| Z_{r',t}^n - Z_{r,t}^n \|^\alpha \right]   &\lesssim  E\left[ \|Z_{r',r}^n - Z_{r,r}^n\|^\alpha\right].
\end{align}

At this point, it is easy to see, following similar steps, that for the process $Z_{r,t}^n$ one has
$$E\left[ e^{-V_t^n} \| Z_{r,t}^n \|^\alpha \right]   \lesssim  E\left[ \|Z_{r,r}^n\|^\alpha\right],$$
where $Z_{r,r}^n = \mathcal{I}_d + \nabla U_n (r,\gamma_{r,n}^{-1}(\tilde{X}_r^{n,x}))$. So
\begin{align}\label{Mallbound}
\sup_{n\geq 0} \sup_{r\in [0,T]} E\left[ e^{-V_t^n} \| Z_{r,t}^n \|^\alpha \right] \lesssim 1+\sup_{n\geq 0} \sup_{r\in [0,T]} E\left[ \|\nabla U_n (r,\gamma_{r,n}^{-1}(\tilde{X}_r^{n,x}))\|^\alpha\right] <\infty
\end{align}
because of Lemma \ref{lemma2} (ii) for a sufficiently large $\lambda\in \R$.

Then, the Cauchy-Schwarz inequality and Lemma \ref{lemma4} give
$$\sup_{n\geq 0} \sup_{r\in [0,T]} E\left[ \| Z_{r,t}^n \|^\alpha \right] \leq \sup_{n\geq 0} \sup_{r\in [0,T]} E\left[ e^{-2V_t^n}\| Z_{r,t}^n \|^{2\alpha} \right]^{1/2} \sup_{n\geq 0} E\left[ e^{2V_T^n}\right]^{1/2}  <\infty.$$

We continue to prove the estimate (\ref{statement1}). Recall that
\begin{align}\label{Zrr}
Z_{r',r}^n - Z_{r,r}^n &= \nabla U_n(r',\gamma_{r',n}^{-1}(\tilde{X}_{r'}^{n,x})) - \nabla U_n(r,\gamma_{r,n}^{-1}(\tilde{X}_{r}^{n,x})) \notag \\
&+ \lambda \int_{r'}^r \nabla U_n(s,\gamma_{s,n}^{-1}(\tilde{X}_{s}^{n,x}))\nabla\gamma_{s,n}^{-1}(\tilde{X}_{s}^{n,x}) Z_{r',s}ds  \\
&+ \int_{r'}^r \nabla^2 U_n(s,\gamma_{s,n}^{-1}(\tilde{X}_{s}^{n,x})) \nabla \gamma_{s,n}^{-1}(\tilde{X}_{s}^{n,x}) Z_{r',s}dB_s. \notag
\end{align}
Then taking norm and using Burkholder-Davis-Gundy inequality we get
\begin{align}\label{burkholder}
E \left[ \|Z_{r',r}^n - Z_{r,r}^n\|^\alpha \right] &\lesssim E\left[ \|\nabla U_n(r',\gamma_{r',n}^{-1}(\tilde{X}_{r'}^{n,x})) - \nabla U_n(r,\gamma_{r,n}^{-1}(\tilde{X}_{r}^{n,x}))\|^\alpha \right]\\
&+ \lambda^{\alpha} E\left[ \left(\int_{r'}^r \|\nabla U_n(s,\gamma_{s,n}^{-1}(\tilde{X}_{s}^{n,x}))\nabla\gamma_{s,n}^{-1}(\tilde{X}_{s}^{n,x}) Z_{r',s}\|ds\right)^\alpha\right] \notag \\
&+ E\left[ \left(\int_{r'}^r \|\nabla^2 U_n(s,\gamma_{s,n}^{-1}(\tilde{X}_{s}^{n,x})) \nabla \gamma_{s,n}^{-1}(\tilde{X}_{s}^{n,x}) Z_{r',s}\|^2 ds\right)^{\alpha/2} \right].\notag\\
&=: i)_n + ii)_n + iii)_n \notag
\end{align}

The aim now is to find H\"{o}lder bounds in the sense of \eqref{statement1} for the expressions appearing in (\ref{burkholder}).

For $i)_n$ we may write

\begin{align*}
i)_n &= E\left[ \|\nabla U_n(r',\gamma_{r',n}^{-1}(\tilde{X}_{r'}^{n,x})) - \nabla U_n(r,\gamma_{r,n}^{-1}(\tilde{X}_{r}^{n,x}))\|^\alpha \right]\\
&\lesssim E\left[ \|\nabla U_n(r',\gamma_{r',n}^{-1}(\tilde{X}_{r'}^{n,x})) - \nabla U_n(r,\gamma_{r',n}^{-1}(\tilde{X}_{r'}^{n,x}))\|^\alpha\right]\\
&+ E\left[\|\nabla U_n(r,\gamma_{r',n}^{-1}(\tilde{X}_{r'}^{n,x})) - \nabla U_n(r,\gamma_{r,n}^{-1}(\tilde{X}_{r}^{n,x}))\|^\alpha\right].
\end{align*}

Then by Lemma \ref{Ureg} there exists an $\varepsilon\in (0,1/ \alpha)$ and a constant $C_{p,q,d,\alpha}>0$ independent of $n\geq 0$ such that
\begin{align*}
E\bigg[ \|\nabla U_n(r',\gamma_{r',n}^{-1}(\tilde{X}_{r'}^{n,x})) &- \nabla U_n(r,\gamma_{r',n}^{-1}(\tilde{X}_{r'}^{n,x}))\|^\alpha\bigg]\\
&\leq C_{p,q,d,\alpha}\left(|r'-r|^{\varepsilon/2} \|\nabla U_n\|_{H_{2,p}^q}^{1-1/q-\varepsilon/2} \|\partial_t U_n\|_{L_p^q}^{1/q+\varepsilon/2}\right)^{\alpha}
\end{align*}
and
\begin{align*}
E\bigg[\|\nabla U_n(r,\gamma_{r',n}^{-1}&(\tilde{X}_{r'}^{n,x})) - \nabla U_n(r,\gamma_{r,n}^{-1}(\tilde{X}_{r}^{n,x}))\|^\alpha\bigg]\\
&\leq C_{p,q,d,\alpha} T^{-\alpha/q} E\left[|\gamma_{r',n}^{-1}(\tilde{X}_{r'}^{n,x}) -\gamma_{r,n}^{-1}(\tilde{X}_{r}^{n,x})|^{\alpha \varepsilon}\right] \left( \| U_n\|_{H_{2,p}^q} + T \|\partial_t U_n\|_{L_p^q}\right)^{\alpha}.
\end{align*}

The above bounds in connection with inequality (\ref{ubound}) in Theorem \ref{theoremU} give
$$i)_n \leq C_{p,q,d,\alpha,T}(\|b_n\|_{L_p^q}) \left(|r'-r|^{\alpha\varepsilon/2} + E\left[|\gamma_{r',n}^{-1}(\tilde{X}_{r'}^{n,x}) -\gamma_{r,n}^{-1}(\tilde{X}_{r}^{n,x})|^{\alpha \varepsilon}\right]\right)$$
for some continuous function $C_{p,q,d,\alpha,T}(\cdot)$ and hence
$$\sup_{n\geq 0} C_{p,q,d,\alpha,T}(\|b_n\|_{L_p^q}) <\infty.$$

Moreover, using Girsanov's theorem, we obtain that
\begin{align*}
E\bigg[|\gamma_{r',n}^{-1}(\tilde{X}_{r'}^{n,x}) &- \gamma_{r,n}^{-1}(\tilde{X}_{r}^{n,x})|\bigg]=E\left[ |X_{r'}^{n,x}- X_{r}^{n,x} |\right]\\
&\lesssim  E\left[ \left|\int_{r'}^r b_n(s, x+B_s)ds\right| \mathcal{E}\left(\int_0^T b_n(u,x+B_u)dB_u\right)\right]+ E\left[ |B_{r'}-B_r|\right]\\
&\lesssim |r'-r|^{1/2} E\left[\int_{r'}^r |b_n(s,x+B_s)|^2 ds\right]^{1/2}+|r'-r|^{1/2}\\
&\lesssim |r'-r|^{1/2}
\end{align*},
where we used, Cauchy-Schwarz inequality and both that
$$\sup_{n\geq 0} E\left[\mathcal{E}\left(\int_0^T b_n(u,x+B_u)dB_u\right)^2\right]<\infty$$
and
$$\sup_{n\geq 0} E\left[\int_{r'}^r |b_n(s,x+B_s)|^2 ds\right]^{1/2}<\infty,$$
see \cite[Lemma 3.2]{KR05} or Lemma \ref{expmoments}.

By Jensen's inequality for concave functions and the previous estimate we have

$$E\left[|\gamma_{r',n}^{-1}(\tilde{X}_{r'}^{n,x}) -\gamma_{r,n}^{-1}(\tilde{X}_{r}^{n,x})|^{\alpha \varepsilon}\right] \leq E\left[|\gamma_{r',n}^{-1}(\tilde{X}_{r'}^{n,x}) -\gamma_{r,n}^{-1}(\tilde{X}_{r}^{n,x})|\right]^{\alpha \varepsilon}\lesssim |r'-r|^{\alpha \varepsilon /2}.$$

Altogether,
$$i)_n \leq C_{p,q,d,\alpha,T}(\|b_n\|_{L_p^q}) |r'-r|^{\delta}$$
for a $\delta \in (0,1)$.

For the second term, $ii)_n$, we use H\"{o}lder's inequality, Lemma \ref{lemma2} (ii) for a sufficiently large $\lambda\in \R$, Lemma \ref{lemma3} and the estimate (\ref{statement2}) to obtain

\begin{align*}
ii)_n &\lesssim \lambda^{\alpha} |r'-r|^{\alpha-1} \left( \sup_{s\in [0,t]}E\left[\|\nabla U_n(s,\gamma_{s,n}^{-1}(\tilde{X}_{s}^{n,x})) \nabla \gamma_{s,n}^{-1}(\tilde{X}_{s}^{n,x}) \|^{2\alpha}\right]ds \right)^{1/2} \left(\int_{r'}^r E\left[ \|Z_{r',s}^n\|^{2\alpha}\right]ds\right)^{1/2}\\
&\leq C_{p,q,d,\alpha,T} |r'-r|^{\delta}
\end{align*}
for a $\delta \in (0,1)$.

Finally, for the third term, for $\alpha\geq 2$, we use H\"{o}lder's inequality to obtain

\begin{align*}
iii)_n \lesssim    |r'-r|^{\frac{\alpha-2}{2}} E\left[\int_{r'}^r \|\nabla^2 U_n(s,\gamma_{s,n}^{-1}(\tilde{X}_{s}^n))\|^\alpha \|\nabla \gamma_{s,n}^{-1}(\tilde{X}_{s}^n))\|^{\alpha} \|Z_{r',s}^n \|^{\alpha} ds \right].
\end{align*}
Then choose $\alpha = 2(1+\delta)$ with $\delta \in (0,1/4)$ and use Lemma \ref{lemma3} to get

$$iii)_n \lesssim |r'-r|^{\delta} E\left[\int_{r'}^r \|\nabla^2 U_n(s,\gamma_{s,n}^{-1}(\tilde{X}_{s}^n))\|^{2(1+\delta)} \|Z_{r',s}^n \|^{2(1+\delta)} ds \right].$$
Then Fubini's theorem, H\"{o}lder's inequality once more with respect to $\mu(d\omega)$, with exponent $1+\delta'$, \mbox{$\delta' \in (0,1/4)$} and Cauchy-Schwarz yield
\begin{align*}
E\bigg[\int_{r'}^r & \|\nabla^2 U_n(s,\gamma_{s,n}^{-1}(\tilde{X}_{s}^n))\|^{2(1+\delta)} \|Z_{r',s}^n \|^{2(1+\delta)} ds \bigg]=\int_{r'}^r E\left[\|\nabla^2 U_n(s,\gamma_{s,n}^{-1}(\tilde{X}_{s}^n))\|^{2(1+\delta)} \|Z_{r',s}^n \|^{2(1+\delta)}\right] ds\\
&\lesssim \int_{r'}^r E\left[\|\nabla^2 U_n(s,\gamma_{s,n}^{-1}(\tilde{X}_{s}^n))\|^{2(1+\delta)(1+\delta')}\right]^{1/(1+\delta')} E\left[\|Z_{r',s}^n \|^{2(1+\delta)\frac{1+\delta'}{\delta'}}\right]^{\frac{\delta'}{1+\delta'}} ds\\
&\lesssim \sup_{n\geq 0}\sup_{s\in [r',r]} E\left[\|Z_{r',s}^n \|^{2(1+\delta)\frac{1+\delta'}{\delta'}}\right]^{\frac{\delta'}{1+\delta'}}\int_{r'}^r E\left[\|\nabla^2 U_n(s,\gamma_{s,n}^{-1}(\tilde{X}_{s}^n))\|^{2(1+\delta)(1+\delta')}\right]^{1/(1+\delta')} ds\\
&\lesssim \int_{0}^T E\left[\|\nabla^2 U_n(s,\gamma_{s,n}^{-1}(\tilde{X}_{s}^n))\|^{2(1+\delta)(1+\delta')}\right]^{1/(1+\delta')} ds
\end{align*}
where the last step follows from (\ref{statement2}). For the last factor, since $0<1/(1+\delta')<1$, using the inverse Jensen's inequality and the fact that $1<(1+\delta)(1+\delta')<2$ for suitable $\delta, \delta' \in (0,1/4)$ in connection with Lemma \ref{lemma1} we have
\begin{align*}
\int_{0}^T E\bigg[&\|\nabla^2 U_n(s,\gamma_{s,n}^{-1}(\tilde{X}_{s}^n))\|^{2(1+\delta)(1+\delta')}\bigg]^{1/(1+\delta')} ds\\
&\leq T^{1-1/(1+\delta')} \left(E\left[\int_{0}^T \|\nabla^2 U_n(s,\gamma_{s,n}^{-1}(\tilde{X}_{s}^n))\|^{2(1+\delta)(1+\delta')} ds\right]\right)^{1/(1+\delta')}\leq M<\infty
\end{align*}
for every $n\geq 0$, w.r.t. a constant $M$.

As a summary, it follows from \eqref{gronwall} that

\begin{align*}\label{diff1}
E\left[ e^{-V_t^n} \| Z_{r',t}^n - Z_{r,t}^n \|^{2(1+\delta)} \right] \leq C_{p,q,d,\alpha,T}(\|b_n\|_{L_p^q}) |r'-r|^{\delta}.
\end{align*}

Then by H\"{o}lder's inequality with exponent $1+\delta$, $\delta\in (0,1)$ together with Lemma \ref{lemma4} we obtain
\begin{align*}
E\left[\| Z_{r',t}^n - Z_{r,t}^n \|^2 \right] &= E\left[ e^{\frac{1}{1+\delta}V_t^n} e^{-\frac{1}{1+\delta}V_t^n} \| Z_{r',t}^n - Z_{r,t}^n \|^2 \right]\\
&\leq E\left[e^{\frac{1}{\delta}V_t^n} \right]^{\frac{\delta}{1+\delta}}E\left[ e^{-V_t^n} \| Z_{r',t}^n - Z_{r,t}^n \|^{2(1+\delta)}\right]^{\frac{1}{1+\delta}}\\
&\leq C_{p,q,d,\alpha,T}(\|b_n\|_{L_p^q}) |r'-r|^{\delta/(1+\delta)}
\end{align*}
with
$$\sup_{n\geq 0} C_{p,q,d,\alpha,T}(\|b_n\|_{L_p^q}) < \infty.$$
\end{proof}

\begin{rem}
The bound given in (\ref{statement2}) is in fact uniform in $x\in \R^d$. Indeed, by Lemma \ref{lemma2} item (ii) we have that the bound given in (\ref{Mallbound}) is also uniform in $x\in \R^d$. Moreover, since $\Delta U_n\in L_p^q$ for all $n\geq 0$, then by Lemma \ref{lemma2} item (iii) in connection with Lemma \ref{expmoments} we have that for any $k\in \R$
$$\sup_{x\in \R^d} \sup_{n\geq 0} E[e^{kV_T^n}] < \infty.$$
Hence, for any $\alpha\geq 1$
$$\sup_{x\in \R^d} \sup_{r\in [0,T]}\sup_{n\geq 0} E\left[\|D_r X_t^{n,x}\|^{\alpha}\right]<\infty.$$
\end{rem}

\begin{rem}\label{derivativebound}
One also checks that the same holds for the spatial derivatives, that is for any $\alpha\geq 1$
$$\sup_{x\in \R^d} \sup_{r\in [0,T]}\sup_{n\geq 0} E\left[\|\frac{\partial}{\partial x} X_t^{n,x}\|^{\alpha}\right]<\infty$$
by using the fact that $\frac{\partial}{\partial x} X_t^{n,x}$ solves the same SDE as $D_r X_t^{n,x}$, starting at $r=0$.
\end{rem}

As a repercussion of Lemma \ref{estimate1} we have the following result which is central in the proof of the existence of strong solutions of (\ref{eqmain1}).

\begin{cor}\label{relcompact}
Let $\{b_n\}_{n\geq 0}$ be a sequence of compactly supported smooth functions approximating $b$ in $L_p^q$. Denote, as before, $X_t^{x,n}$ the solution to equation (\ref{eqmain1}) with drift coefficient $b_n$. Then for each $t\in [0,T]$ the sequence of random variables $X_t^{n,x}$, $n\geq 0$ is relatively compact in $L^2(\Omega)$.
\end{cor}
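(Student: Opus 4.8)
The plan is to deduce Corollary \ref{relcompact} from Lemma \ref{estimate1} by invoking the compactness criterion of Da Prato--Malliavin--Nualart for subsets of $L^2(\Omega)$ (stated in Appendix \ref{App1}). That criterion asserts, roughly, that a bounded family $\{G_n\}\subset L^2(\Omega)$ of Malliavin differentiable random variables is relatively compact in $L^2(\Omega)$ provided the Malliavin derivatives $D_\cdot G_n$ are uniformly bounded in $L^2(\Omega\times[0,T])$ and satisfy a uniform equicontinuity-type estimate of the form
\begin{align*}
\sup_{n}\int_0^T\int_0^T \frac{E\big[\|D_{r'}G_n - D_r G_n\|^2\big]}{|r'-r|^{1+2\beta}}\,dr'\,dr < \infty
\end{align*}
for some $\beta>0$ (together with a uniform bound $\sup_n \sup_r E[\|D_r G_n\|^2]<\infty$). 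So the task is simply to check these hypotheses for $G_n := X_t^{n,x}$ with $t\in[0,T]$ fixed.

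First I would record that $X_t^{n,x}$ is Malliavin differentiable (the $b_n$ being smooth with compact support, this is classical, cf. \cite{Nua10}) and that, by the second assertion \eqref{statement2} of Lemma \ref{estimate1} with $p=2$,
\begin{align*}
\sup_{n\geq 1}\sup_{r\in[0,T]} E\big[\|D_r X_t^{n,x}\|^2\big] < \infty,
\end{align*}
which in particular gives the uniform bound $\sup_n E[\|X_t^{n,x}\|^2]<\infty$ needed for boundedness in $L^2(\Omega)$ (alternatively this follows directly from Girsanov as in the proof of Lemma \ref{estimate1}), and also $\sup_n \|D_\cdot X_t^{n,x}\|_{L^2(\Omega\times[0,T])}^2 \leq T \sup_n\sup_r E[\|D_r X_t^{n,x}\|^2] <\infty$. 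Next, I would feed the H\"older estimate \eqref{statement1} into the double integral: by Lemma \ref{estimate1} there is $\delta\in(0,1)$ with
\begin{align*}
E\big[\|D_{r'}X_t^{n,x} - D_r X_t^{n,x}\|^2\big] \leq C(\|b_n\|_{L_p^q})\,|r'-r|^{\delta},\qquad \sup_{n\geq1}C(\|b_n\|_{L_p^q})<\infty,
\end{align*}
so, choosing $\beta>0$ small enough that $\delta - 2\beta > 0$ (e.g. $\beta = \delta/4$), the integrand is dominated by $C\,|r'-r|^{\delta - 1 - 2\beta}$ with exponent $\delta-1-2\beta > -1$, hence integrable over $[0,T]^2$, and the bound is uniform in $n$ since $\sup_n C(\|b_n\|_{L_p^q})<\infty$. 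This verifies all hypotheses of the compactness criterion, so $\{X_t^{n,x}\}_{n\geq0}$ is relatively compact in $L^2(\Omega;\R^d)$ for each fixed $t$, which is the claim. (The case $n=0$, i.e. $b_0 = b$, is harmless: it is a single element, though strictly one reads the statement as the sequence indexed $n\geq 1$ together with possibly one extra point.)

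There is essentially no real obstacle here, since Lemma \ref{estimate1} was precisely engineered to supply the two inputs the criterion demands; the only point requiring a modicum of care is the bookkeeping of exponents — one must make sure the H\"older exponent $\delta$ from \eqref{statement1} strictly exceeds $2\beta$ for an admissible fractional-Sobolev parameter $\beta$ in the Da Prato--Malliavin--Nualart criterion, and that the constants are genuinely $n$-uniform, both of which are guaranteed by the $\sup_{n\geq1}C(\|b_n\|_{L_p^q})<\infty$ clause. Vectorisation (the random variables take values in $\R^d$ rather than $\R$) is handled componentwise, applying the criterion to each coordinate $X_t^{i,n,x}$ and using that a finite product of relatively compact sets is relatively compact.
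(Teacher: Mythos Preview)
Your proposal is correct and follows exactly the paper's approach: apply the Da Prato--Malliavin--Nualart compactness criterion (Appendix~\ref{App2}, not \ref{App1}) with Lemma~\ref{estimate1} supplying both the uniform Malliavin-derivative bound and the H\"older estimate needed to make the fractional Sobolev double integral finite for a sufficiently small $\beta$. The paper's proof is a two-line sketch of precisely this argument; your version simply spells out the exponent bookkeeping and the componentwise reduction more carefully.
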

\begin{proof}
This is a direct consequence of the compactness criterion that can be found in \mbox{Appendix \ref{App2}}, Lemma \ref{MCompactness} and \ref{DaPMN}, which is due to \cite{DPMN92}, together with Lemma \ref{estimate1}. One can check that the double integral in Lemma \ref{DaPMN} is finite. Namely
$$ \int_0^T \int_0^T \frac{E\left[ \|Z_{r',t}^n - Z_{r,t}\|^2\right]}{|r'-r|^{1+2\beta}}dr'dr \leq \int_0^T\int_0^T \frac{1}{|r'-r|^{2\beta +1-\delta}} dr' dr < \infty$$
for any $0<\delta< 1$ and $2\beta+1-\delta<1$.
\end{proof}

The following lemma gives a criterion under which the process $Y_{t}^{b}$
belongs to the Hida distribution space.

\begin{lemm}
\label{hidadistr} Suppose that 
\begin{equation} 
E_{\mu }\left[ \exp \left( 36\int_{0}^{T}\left| b(s,B_{s})\right|^{2}ds\right) \right] <\infty ,  \label{intcond}
\end{equation}%
where the drift $b:[0,1]\times \mathbb{R}^{d}\mathbb{\longrightarrow R}^{d}$
is measurable (in particular, \eqref{intcond} is valid for $b\in L_p^q$ because of Lemma (\ref{expmoments})). Then the coordinates of the process $Y_{t}^{b},$ defined in (\ref{equation}), that is 
\begin{equation}
Y_{t}^{i,b}=E_{\widetilde{\mu }}\left[ \widetilde{B}_{t}^{(i)}\mathcal{E}%
_{T}^{\diamond }(b)\right]  \,,  \label{explobject} 
\end{equation}%
\bigskip are elements of the Hida distribution space.
\end{lemm}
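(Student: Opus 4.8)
The plan is to show that $Y_t^{i,b} = E_{\widetilde\mu}[\widetilde B_t^{(i)}\,\mathcal{E}_T^\diamond(b)]$ defines a Hida distribution by controlling its $\mathcal{S}$-transform, which is the standard tool for membership in $(\mathcal{S})^\ast$. Recall that a generalized random variable $\Phi$ lies in the Hida space iff its $\mathcal{S}$-transform $(\mathcal{S}\Phi)(\f)$ is a well-defined function on the Schwartz space that satisfies an appropriate exponential growth bound in $\f$ together with a holomorphy/continuity condition (Potthoff--Streit type characterization). So first I would compute $(\mathcal{S}Y_t^{i,b})(\f)$ for $\f$ in the relevant test function space. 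Since the Wick exponential $\mathcal{E}_T^\diamond(b)$ is built from the white noise $W^j$ of $B^j$ together with the independent copy $\widetilde B$, the $\mathcal{S}$-transform in the $\omega$-variable turns the Wick exponential into an ordinary exponential: one expects
\begin{align*}
(\mathcal{S}Y_t^{i,b})(\f) = E_{\widetilde\mu}\Big[\widetilde B_t^{(i)}\exp\Big(\sum_{j=1}^d \int_0^T\big(\f_j(s)+b^j(s,\widetilde B_s)\big)\,d\widetilde B_s^j - \tfrac12\sum_{j=1}^d\int_0^T\big(\f_j(s)+b^j(s,\widetilde B_s)\big)^2\,ds\Big)\Big],
\end{align*}
i.e. the Wick product and Wick square are replaced by their ordinary counterparts and $W^j_s$ is replaced by the deterministic $\f_j(s)$. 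This identity is essentially the defining property of the $\mathcal{S}$-transform combined with Remark \ref{Remarkexp}; I would justify it by approximating $W^j$ through the Riemann sums in Remark \ref{Remarkexp} and passing to the limit.

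Next I would bound the right-hand side. Splitting the Girsanov-type exponent, write the exponential as $\mathcal{E}\big(\int_0^T(\f+b)\,d\widetilde B\big)\cdot\exp\big(-\int_0^T \f\cdot b\,ds\big)$ up to the obvious pieces, or more simply estimate via Cauchy--Schwarz:
\begin{align*}
|(\mathcal{S}Y_t^{i,b})(\f)| \le E_{\widetilde\mu}\big[|\widetilde B_t^{(i)}|^2\big]^{1/2}\, E_{\widetilde\mu}\Big[\exp\Big(2\sum_{j=1}^d\int_0^T(\f_j+b^j)\,d\widetilde B_s^j - \sum_{j=1}^d\int_0^T(\f_j+b^j)^2\,ds\Big)\Big]^{1/2}.
\end{align*}
The first factor is finite and deterministic. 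For the second factor I would complete the exponent to a genuine Dol\'eans--Dade exponential (which has expectation $1$) at the cost of an extra term of the form $\exp\big(C\int_0^T(|\f(s)|^2+|b(s,\widetilde B_s)|^2)\,ds\big)$, and then apply Cauchy--Schwarz once more to separate the $\f$-part from the $b$-part. The $b$-part is controlled precisely by the integrability hypothesis \eqref{intcond} (equivalently Lemma \ref{expmoments} for $b\in L_p^q$), after possibly enlarging the constant to absorb the factor needed to make the remaining stochastic exponential a true martingale; this is where the constant $36$ in \eqref{intcond} is used. The $\f$-part yields a bound of the type $\exp\big(K\|\f\|^2\big)$ for a continuous norm $\|\cdot\|$ on the test space, uniformly in $t\in[0,T]$. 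Holomorphy of $z\mapsto(\mathcal{S}Y_t^{i,b})(z\f+\psi)$ follows from differentiating under the (finite) expectation. By the Potthoff--Streit characterization theorem this exponential-quadratic growth plus entirety is exactly what is needed to conclude $Y_t^{i,b}\in(\mathcal{S})^\ast$.

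The main obstacle is the bookkeeping in the exponential estimate: one must carefully split the exponent so that each stochastic-exponential factor is a genuine martingale (hence has expectation $1$) while the leftover deterministic-in-$\widetilde\omega$ and purely-$b$-dependent factors are finite, and track how large a multiple of $\int_0^T|b(s,\widetilde B_s)|^2ds$ appears — this is what dictates the numerical constant in \eqref{intcond}. A secondary technical point is justifying the $\mathcal{S}$-transform computation rigorously, since it involves interchanging the Pettis integral $E_{\widetilde\mu}$, the $\mathcal{S}$-transform, and the limit of Riemann sums defining the white-noise stochastic integral; this requires the continuity of the $\mathcal{S}$-transform and dominated convergence in $(\mathcal{S})^\ast$, using the uniform bounds just established. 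Once these two points are handled, the growth bound is uniform in $t$, so in fact $t\mapsto Y_t^{b}$ is a well-defined $(\mathcal{S})^\ast$-valued process, which is what the statement asserts.
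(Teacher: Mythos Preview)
Your proposal is correct and follows essentially the same route as the argument the paper defers to in \cite{MBP10}: compute the $S$-transform of $Y_t^{i,b}$ as the $\widetilde\mu$-expectation of $\widetilde B_t^{(i)}$ against an ordinary Dol\'eans--Dade exponential with drift $b+\phi$, then derive an exponential-quadratic bound in $\phi$ via repeated Cauchy--Schwarz/H\"older together with the supermartingale property, and invoke the Potthoff--Streit characterization of $(\mathcal{S})^\ast$. The constant $36$ in \eqref{intcond} is precisely what falls out of that chain of estimates, as you anticipated.
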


\begin{proof}
See \cite{MBP10} for a similar proof.
\end{proof}

\begin{lemm}
\label{diffestimate} Let $\varepsilon \in (0,1)$ and define $p_{\varepsilon} := 1+\varepsilon$ and $q_{\varepsilon} := \frac{1+\varepsilon}{\varepsilon}$. Let $b_{n}:[0,T]\times \mathbb{R}^{d}\mathbb{%
\longrightarrow R}^{d}$ be a sequence of Borel measurable functions with $%
b_{0}=b$ such that
\begin{equation} \label{supbound}
\sup_{n \geq 0} E \left[ \exp \left( 16 q_{\varepsilon}(8 q_{\varepsilon}-1)\int_0^T \left| b_n(s,B_s) \right|^2 ds \right) \right] < \infty
\end{equation}
holds. Then 
\begin{equation*}
\left\vert S(Y_{t}^{i,b_{n}}-Y_{t}^{i,b})(\phi )\right\vert \leq const\cdot
E[J_{n}]^{\frac{1}{p_{\varepsilon}}}\cdot \exp \left(2(8 q_{\varepsilon}-1)\int_{0}^{T}\left| \phi (s)\right|^{2}ds\right)
\end{equation*}%
for all $\phi \in (S_{\mathbb{C}}([0,1]))^{d}$, $i=1,\ldots,d$, where $S$ denotes the $S$-transform (see Section \ref{WN} in Appendix \ref{mallcalgaus}) and where the factor 
$J_{n}$ is defined by 
\begin{align} \label{Jn}
J_n = \sum_{j=1}^d 2\left|\int_0^T (b_n^{(j)}(s,\tilde{B}_s^{(j)})-b^{(j)}(s,\tilde{B}_s^{(j)}))^2ds\right|^{\frac{p_{\varepsilon}}{2}}+\left|\int_0^T (b^{(j)}(s,\tilde{B}_s^{(j)})^2 - b_n^{(j)}(s,\tilde{B}_s^{(j)})^2)ds\right|^{p_{\varepsilon}}.
\end{align}
Here $S_{\mathbb{C}}([0,1])$ is the complexification of the Schwarz space $S([0,1])$ on $[0,1]$, see Section \ref{WN} in Appendix \ref{mallcalgaus}.

In particular, if $b_n$ approximates $b$ in the following sense
\begin{equation} \label{Jnestimate}
E[J_n] \rightarrow 0
\end{equation}
as $n \rightarrow \infty$, it follows that 
$$
Y^{b_n}_t \rightarrow Y^b_t \textrm{ in } ( \mathcal{S})^*
$$
as $n \rightarrow \infty$ for all $0 \leq t \leq 1$, $i = 1,\dots, d$.
\end{lemm}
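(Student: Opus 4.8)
The plan is to compute the $S$-transform of $Y_t^{i,b_n} - Y_t^{i,b}$ explicitly using the definition of the Wick--Doléans exponential $\mathcal{E}_T^\diamond(\cdot)$ from \eqref{Doleanswick} and then estimate the resulting expression by Hölder's inequality. Recall that $Y_t^{i,b} = E_{\widetilde\mu}[\widetilde B_t^{(i)} \mathcal{E}_T^\diamond(b)]$, and applying the $S$-transform in the variable $\phi$ turns the white noise $W_s^j$ appearing in $\mathcal{E}_T^\diamond(b)$ into the deterministic function $\phi^{(j)}(s)$ (this is the standard action of the $S$-transform on Wick exponentials). Thus $S(Y_t^{i,b})(\phi)$ becomes an honest Pettis/Bochner expectation over $\widetilde\Omega$ of $\widetilde B_t^{(i)}$ times a genuine (non-Wick) Doléans exponential $\mathcal{E}\bigl(\int_0^T (\phi(s)+b(s,\widetilde B_s))\,d\widetilde B_s\bigr)$ with a correction factor $\exp(\int_0^T \phi(s)\cdot b(s,\widetilde B_s)\,ds)$ coming from the conversion of the Wick square $(\phi + b)^{\diamond 2}$ into the ordinary square. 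I would write out both $S(Y_t^{i,b_n})(\phi)$ and $S(Y_t^{i,b})(\phi)$ in this form and subtract.

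The difference then naturally splits into two pieces: one where the Girsanov-type densities differ because $b_n \neq b$ inside the stochastic integral $\int_0^T b\,d\widetilde B$, and one where they differ in the drift-correction term $\int_0^T b^2\,ds$ (and in the cross term with $\phi$). For the first piece I would use $|e^x - e^y| \le (e^x + e^y)|x-y|$ together with the elementary bound $|x-y| \le$ something controlled by $\bigl|\int_0^T (b_n^{(j)} - b^{(j)})^2\,ds\bigr|^{1/2}$ via Cauchy--Schwarz applied to the stochastic integral after a further change of measure to remove the $d\widetilde B$; for the second piece the difference is already of the form $\int_0^T (b^2 - b_n^2)\,ds$. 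In both cases one pulls out a factor of $J_n^{1/p_\varepsilon}$ (the power $p_\varepsilon = 1+\varepsilon$ is dictated by the Hölder exponent one must use) and is left with expectations of products of exponentials of the form $\exp(c\int_0^T |b_n|^2\,ds)$, $\exp(c\int_0^T |b|^2\,ds)$ and $\exp(c\int_0^T |\phi|^2\,ds)$. The hypothesis \eqref{supbound} is precisely what guarantees that all the $b$- and $b_n$-dependent exponential moments are finite and bounded uniformly in $n$, after one applies Hölder with the conjugate exponent $q_\varepsilon = (1+\varepsilon)/\varepsilon$ and absorbs the constant into ``$const$''; the $\phi$-dependent factor is deterministic and produces exactly the stated $\exp\bigl(2(8q_\varepsilon - 1)\int_0^T |\phi(s)|^2\,ds\bigr)$. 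The bookkeeping of which numerical constant ($16 q_\varepsilon(8q_\varepsilon - 1)$, $2(8q_\varepsilon-1)$, the $36$ in Lemma \ref{hidadistr}) ends up where is the fiddly part but is forced by repeated applications of Cauchy--Schwarz/Hölder and the completion-of-the-square inequalities $2ab \le a^2 + b^2$.

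For the final assertion, once the $S$-transform bound is established, I would invoke the characterization theorem for $(\mathcal{S})^*$ (Potthoff--Streit / the $S$-transform criterion, see the appendix referenced as Section \ref{WN}): a sequence in $(\mathcal{S})^*$ converges if the $S$-transforms converge pointwise on $(S_{\mathbb{C}}([0,1]))^d$ and are uniformly bounded by a fixed $U$-functional of the form $C\exp(a\|\phi\|^2)$. The uniform bound is supplied by the displayed inequality together with $\sup_n E[J_n] < \infty$ (which follows from \eqref{Jnestimate} for $n$ large plus finitely many finite terms, or directly from \eqref{supbound}), and the pointwise convergence $S(Y_t^{i,b_n} - Y_t^{i,b})(\phi) \to 0$ follows because the right-hand side is $const \cdot E[J_n]^{1/p_\varepsilon} \to 0$ by \eqref{Jnestimate}. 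Hence $Y_t^{b_n} \to Y_t^b$ in $(\mathcal{S})^*$ for every $t$ and every coordinate. I expect the main obstacle to be the careful passage from the Wick exponential to the ordinary Doléans exponential under the $S$-transform — making sure the Wick-square term is correctly converted, the cross term with $\phi$ is tracked, and the subsequent Girsanov change of measure needed to estimate the stochastic-integral difference is justified (its exponential density is itself controlled by \eqref{supbound}) — rather than any single estimate, each of which is routine once the right splitting is in place.
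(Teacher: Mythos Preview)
Your outline is correct and follows essentially the same route as the paper. Two small tactical differences worth flagging: the paper factors the difference as $e^{A}\bigl(e^{Q_n}-1\bigr)$ with $Q_n = A_n - A$ and then applies $|e^{z}-1|\le |z|e^{|z|}$, rather than your symmetric bound $|e^x-e^y|\le(e^x+e^y)|x-y|$; and the stochastic-integral contribution to $E[|Q_n|^{p_\varepsilon}]$ is handled directly by the Burkholder--Davis--Gundy inequality, not by a further Girsanov change of measure. Otherwise the splitting, the H\"older step with exponents $p_\varepsilon,q_\varepsilon$, the use of \eqref{supbound} for the exponential moments, and the final appeal to the $S$-transform characterization of convergence in $(\mathcal{S})^*$ all match.
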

\begin{proof}
For $i=1,\dots, d$ we obtain by Proposition \ref{explicit} and (\ref{StransfvonW}) that
\begin{align*}
|S(Y_t^{i,b_n}-Y_t^{i,b})(\phi)| &\leq E_{\tilde{\mu}}\Bigg[|\tilde{B}_t^{(i)}| \exp \Bigg\{\sum_{j=1}^d \mbox{Re}\bigg[\int_0^T (b^{(j)}(s,\tilde{B}_s^{(j)})+\phi^{(j)}(s))d\tilde{B}_s^{(j)}\\
&- \frac{1}{2}\int_0^T (b^{(j)}(s,\tilde{B}_s^{(j)})+\phi^{(j)}(s))^2 ds\bigg] \Bigg\}\\
&\times \Bigg|\exp \Bigg\{\sum_{j=1}^d \int_0^T (b_n^{(j)}(s,\tilde{B}_s^{(j)})-b^{(j)}(s,\tilde{B}_s^{(j)}))d\tilde{B}_s^{(j)} \\
&+\frac{1}{2}\int_0^T (b^{(j)}(s,\tilde{B}_s^{(j)})^2 - b_n^{(j)}(s,\tilde{B}_s^{(j)})^2)ds\\
&+\int_0^T \phi^{(j)}(s)(b^{(j)}(s,\tilde{B}_s^{(j)})-b_n^{(j)}(s,\tilde{B}_s^{(j)}))ds\Bigg\} - 1 \Bigg|\Bigg].
\end{align*}

Since $|\exp \{z\}-1| \leq |z| \exp\{|z|\}$ it follows from H\"{o}lder's inequality with exponents $p_{\varepsilon}=1+\varepsilon$ and $q_{\varepsilon} = \frac{1+\varepsilon}{\varepsilon}$, for an appropriate $\varepsilon>0$, that

\begin{align*}
|S(Y_t^{i,b_n}-Y_t^{i,b})(\phi)| &\leq E_{\tilde{\mu}} \left[|Q_n|^{p_{\varepsilon}}\right]^{\frac{1}{p_{\varepsilon}}} E_{\tilde{\mu}} \Bigg[ \bigg( |\tilde{B}_t^{(i)}| \exp \Bigg\{\sum_{j=1}^d \mbox{Re}\bigg[\int_0^T (b^{(j)}(s,\tilde{B}_s^{(j)})+\phi^{(j)}(s))d\tilde{B}_s^{(j)}\\
&- \frac{1}{2}\int_0^T (b^{(j)}(s,\tilde{B}_s^{(j)})+\phi^{(j)}(s))^2 ds\bigg] \Bigg\} \bigg)^{q_{\varepsilon}} \exp\left\{q_{\varepsilon} |Q_n|\right\}\Bigg]^{\frac{1}{q_{\varepsilon}}},
\end{align*}
where
\begin{align*}
Q_n &= \sum_{j=1}^d \int_0^T (b_n^{(j)}(s,\tilde{B}_s^{(j)})-b^{(j)}(s,\tilde{B}_s^{(j)}))d\tilde{B}_s^{(j)}+\frac{1}{2}\int_0^T (b^{(j)}(s,\tilde{B}_s^{(j)})^2 - b_n^{(j)}(s,\tilde{B}_s^{(j)})^2)ds\\
&+\int_0^T \phi^{(j)}(s)(b^{(j)}(s,\tilde{B}_s^{(j)})-b_n^{(j)}(s,\tilde{B}_s^{(j)}))ds.
\end{align*}

Then using the Cauchy-Schwarz inequality on the last integral and the fact that $|x|\leq e^x$ and $1\leq e^x$ for $x\geq 0$ we may write
\begin{align*}
E_{\tilde{\mu}}\left[ |Q_n|^{p_{\varepsilon}}\right] &\leq C \exp \left\{\left(\int_0^T |\phi(s)|^2 ds\right)^{p_{\varepsilon}/2}\right\} E_{\tilde{\mu}} \Bigg[ \sum_{j=1}^d \left|\int_0^T (b_n^{(j)}(s,\tilde{B}_s^{(j)})-b^{(j)}(s,\tilde{B}_s^{(j)}))d\tilde{B}_s^{(j)}\right|^{p_{\varepsilon}}\\
&+\left|\frac{1}{2}\int_0^T (b^{(j)}(s,\tilde{B}_s^{(j)})^2 - b_n^{(j)}(s,\tilde{B}_s^{(j)})^2)ds\right|^{p_{\varepsilon}}\\
&+\left|\int_0^T (b^{(j)}(s,\tilde{B}_s^{(j)})-b_n^{(j)}(s,\tilde{B}_s^{(j)}))^2ds\right|^{\frac{p_{\varepsilon}}{2}}\Bigg]\\
&= C \exp \left\{\left(\int_0^T |\phi(s)|^2 ds\right)^{p_{\varepsilon}/2}\right\} E_{\tilde{\mu}} \Bigg[ \sum_{j=1}^d 2\left|\int_0^T (b_n^{(j)}(s,\tilde{B}_s^{(j)})-b^{(j)}(s,\tilde{B}_s^{(j)}))^2ds\right|^{\frac{p_{\varepsilon}}{2}}\\
&+\left|\int_0^T (b^{(j)}(s,\tilde{B}_s^{(j)})^2 - b_n^{(j)}(s,\tilde{B}_s^{(j)})^2)ds\right|^{p_{\varepsilon}}\Bigg],
\end{align*}
where in the last inequality we used the Burkholder-Davis-Gundy inequality for the stochastic integral. Then
$$E_{\tilde{\mu}}\left[ |Q_n|^{p_{\varepsilon}}\right]^{\frac{1}{p_{\varepsilon}}} \leq  C \exp \left\{\frac{1}{p_{\varepsilon}}\left(\int_0^T |\phi(s)|^2 ds\right)^{p_{\varepsilon}/2}\right\} E_{\tilde{\mu}}\left[ J_n\right]^{\frac{1}{p_{\varepsilon}}},$$
where
$$J_n = \sum_{j=1}^d 2\left|\int_0^T (b_n^{(j)}(s,\tilde{B}_s^{(j)})-b^{(j)}(s,\tilde{B}_s^{(j)}))^2ds\right|^{\frac{p_{\varepsilon}}{2}}+\left|\int_0^T (b^{(j)}(s,\tilde{B}_s^{(j)})^2 - b_n^{(j)}(s,\tilde{B}_s^{(j)})^2)ds\right|^{p_{\varepsilon}}.$$

Further we get that
\begin{align*}
E_{\tilde{\mu}} \Bigg[ &\bigg( |\tilde{B}_t^{(i)}| \exp \Bigg\{\sum_{j=1}^d \mbox{Re}\bigg[\int_0^T (b^{(j)}(s,\tilde{B}_s^{(j)})+\phi^{(j)}(s))d\tilde{B}_s^{(j)}\\
&- \frac{1}{2}\int_0^T (b^{(j)}(s,\tilde{B}_s^{(j)})+\phi^{(j)}(s))^2 ds\bigg] \Bigg\} \bigg)^{q_{\varepsilon}} \exp\left\{q_{\varepsilon} |Q_n|\right\}\Bigg]^{\frac{1}{q_{\varepsilon}}}\\
&\leq E_{\tilde{\mu}} \Bigg[\bigg( |\tilde{B}_t^{(i)}| \exp \Bigg\{\sum_{j=1}^d \mbox{Re}\bigg[\int_0^T (b^{(j)}(s,\tilde{B}_s^{(j)})+\phi^{(j)}(s))d\tilde{B}_s^{(j)}\\
&- \frac{1}{2}\int_0^T (b^{(j)}(s,\tilde{B}_s^{(j)})+\phi^{(j)}(s))^2 ds\bigg] \Bigg\} \bigg)^{2 q_{\varepsilon}}\Bigg]^{\frac{1}{2 q_{\varepsilon}}} E_{\tilde{\mu}} \Bigg[ \exp\left\{2 q_{\varepsilon} |Q_n|\right\}\Bigg]^{\frac{1}{2 q_{\varepsilon}}}.
\end{align*}

Then for $z\in \mathbb{C}$ one has $\exp\{|z|\}\leq \frac{1}{2}\left(\exp\{2\mbox{Re } z\}+\exp\{-2\mbox{Re }z\}+\exp\{2\mbox{Im }z\}+\exp\{-2\mbox{Im }z\}\right)$. Thus
\begin{align*}
E_{\tilde{\mu}} \Bigg[ \exp\left\{2 q_{\varepsilon} |Q_n|\right\}\Bigg]^{\frac{1}{2 q_{\varepsilon}}} &\leq \frac{1}{2^{2q_{\varepsilon}}} \Bigg(E_{\tilde{\mu}} \Bigg[ \exp\left\{4 q_{\varepsilon} \mbox{Re } Q_n\right\}\Bigg]^{\frac{1}{2 q_{\varepsilon}}} + E_{\tilde{\mu}} \Bigg[ \exp\left\{-4 q_{\varepsilon} \mbox{Re } Q_n\right\}\Bigg]^{\frac{1}{2 q_{\varepsilon}}}\\
&+ E_{\tilde{\mu}} \Bigg[ \exp\left\{4 q_{\varepsilon} \mbox{Im } Q_n\right\}\Bigg]^{\frac{1}{2 q_{\varepsilon}}} + E_{\tilde{\mu}} \Bigg[ \exp\left\{-4 q_{\varepsilon} \mbox{Im } Q_n\right\}\Bigg]^{\frac{1}{2 q_{\varepsilon}}}\Bigg).
\end{align*}

By the Cauchy-Schwarz inequality and the supermartingale property of Dol\'{e}ans-Dade exponentials we get
\begin{align*}
E_{\tilde{\mu}} \Bigg[ \exp\left\{4 q_{\varepsilon} \mbox{Re } Q_n\right\}\Bigg]& \leq E_{\tilde{\mu}} \Bigg[\exp \Bigg\{\sum_{j=1}^d 32 q_{\varepsilon}^2 \int_0^T (b_n^{(j)}(s,\tilde{B}_s^{(j)})-b^{(j)}(s,\tilde{B}_s^{(j)}))^2ds\\
&+4 q_{\varepsilon}\int_0^T (b^{(j)}(s,\tilde{B}_s^{(j)})^2-b_n^{(j)}(s,\tilde{B}_s^{(j)})^2) ds\\
&+ 8q_{\varepsilon} \int_0^T \mbox{Re }\phi^{(j)}(s) (b^{(j)}(s,\tilde{B}_s^{(j)})-b_n^{(j)}(s,\tilde{B}_s^{(j)})) ds\bigg] \Bigg\}\Bigg]^{\frac{1}{2}}\\
&\leq L_n \exp \left\{2q_{\varepsilon} \int_0^T |\phi(s)|^2 ds \right\},
\end{align*}
where the last step follows from the fact that $\langle f,g \rangle \leq \frac{1}{2}(\|f\|^2 + \|g\|^2)$, $f,g\in L^2([0,T])$ and where
\begin{align*}
L_n &= E_{\tilde{\mu}} \Bigg[\exp \Bigg\{\sum_{j=1}^d 4q_{\varepsilon}(8 q_{\varepsilon}+1) \int_0^T (b_n^{(j)}(s,\tilde{B}_s^{(j)})-b^{(j)}(s,\tilde{B}_s^{(j)}))^2ds\\
&+4 q_{\varepsilon}\int_0^T (b^{(j)}(s,\tilde{B}_s^{(j)})^2-b_n^{(j)}(s,\tilde{B}_s^{(j)})^2) ds\bigg] \Bigg\}\Bigg]^{\frac{1}{2}}.
\end{align*}

Similarly, one also obtains
$$E_{\tilde{\mu}} \Bigg[ \exp\left\{-4 q_{\varepsilon} \mbox{Re } Q_n\right\}\Bigg] \leq L_n \exp \left\{2q_{\varepsilon} \int_0^T |\phi(s)|^2 ds \right\}.$$

In the same way, one also obtains the same bounds for $E_{\tilde{\mu}}\left[\exp\{4q_{\varepsilon} \mbox{Im } Q_n\} \right]$ and $E_{\tilde{\mu}}\left[\exp\{-4q_{\varepsilon} \mbox{Im } Q_n\} \right]$.

Finally, for the remaining factor we see that
\begin{align*}
E_{\tilde{\mu}} &\Bigg[\bigg( |\tilde{B}_t^{(i)}| \exp \Bigg\{\sum_{j=1}^d \mbox{Re}\bigg[\int_0^T (b^{(j)}(s,\tilde{B}_s^{(j)})+\phi^{(j)}(s))d\tilde{B}_s^{(j)}\\
&- \frac{1}{2}\int_0^T (b^{(j)}(s,\tilde{B}_s^{(j)})+\phi^{(j)}(s))^2 ds\bigg] \Bigg\} \bigg)^{2 q_{\varepsilon}}\Bigg]^{\frac{1}{2 q_{\varepsilon}}}\\
&\leq E_{\tilde{\mu}} \left[ |\tilde{B}_t^{(i)}|^{4q_{\varepsilon}}\right]^{\frac{1}{4q_{\varepsilon}}} E_{\tilde{\mu}} \Bigg[\exp \Bigg\{4 q_{\varepsilon}\sum_{j=1}^d  \mbox{Re}\bigg[\int_0^T (b^{(j)}(s,\tilde{B}_s^{(j)})+\phi^{(j)}(s))d\tilde{B}_s^{(j)}\\
&- \frac{1}{2}\int_0^T (b^{(j)}(s,\tilde{B}_s^{(j)})+\phi^{(j)}(s))^2 ds\bigg] \Bigg\}\Bigg]^{\frac{1}{4 q_{\varepsilon}}}\\
&\leq E_{\tilde{\mu}} \left[ |\tilde{B}_t^{(i)}|^{4q_{\varepsilon}}\right]^{\frac{1}{4q_{\varepsilon}}}E_{\tilde{\mu}} \Bigg[\exp \Bigg\{\sum_{j=1}^d   4 q_{\varepsilon}(8 q_{\varepsilon}-1)\int_0^T \mbox{Re }(b^{(j)}(s,\tilde{B}_s^{(j)})+\phi^{(j)}(s))^2ds \Bigg\}\Bigg]^{\frac{1}{4 q_{\varepsilon}}}.
\end{align*}

Now, since $\mbox{Re }(z^2) \leq (\mbox{Re } z)^2$, $z\in \mathbb{C}$ we have that $\mbox{Re }(b+\phi)^2 \leq (b+ \mbox{Re } \phi)^2$ then using Minkowski's inequality, i.e. $\|f+g\|_p^p \leq 2^{p-1}(\|f\|_p^p + \|g\|_p^p)$ for any $p\geq 1$ and Cauchy-Schwarz inequality w.r.t. $\tilde{\mu}$ one finally obtains

\begin{align*}
E_{\tilde{\mu}} &\Bigg[\bigg( |\tilde{B}_t^{(i)}| \exp \Bigg\{\sum_{j=1}^d \mbox{Re}\bigg[\int_0^T (b^{(j)}(s,\tilde{B}_s^{(j)})+\phi^{(j)}(s))d\tilde{B}_s^{(j)}\\
&- \frac{1}{2}\int_0^T (b^{(j)}(s,\tilde{B}_s^{(j)})+\phi^{(j)}(s))^2 ds\bigg] \Bigg\} \bigg)^{2 q_{\varepsilon}}\Bigg]^{\frac{1}{2 q_{\varepsilon}}}\\
&\leq C E_{\tilde{\mu}} \left[ \exp\left\{ 16 q_{\varepsilon}(8 q_{\varepsilon}-1)\int_0^T |b(s,\tilde{B}_s)|^2 ds\right\}\right]^{\frac{1}{8q_{\varepsilon}}} \exp\left\{2(8 q_{\varepsilon}-1) \int_0^T |\phi(s)|^2 ds\right\}.
\end{align*}

Altogether, we obtain
$$\left\vert S(Y_{t}^{i,b_{n}}-Y_{t}^{i,b})(\phi )\right\vert \leq const\cdot
E[J_{n}]^{\frac{1}{1+\varepsilon}}\cdot \exp\left\{2\left(8 \frac{1+\varepsilon}{\varepsilon}-1\right) \int_0^T |\phi(s)|^2 ds\right\}.$$
\end{proof}

\begin{lemm}
\label{squareint} Let $b_{n}:[0,T]\times \mathbb{R}^{d}\mathbb{\longrightarrow } \mathbb{R}^{d}$ be a sequence of smooth functions with compact support with $b_0:=b$ which approximate the coefficient $b:[0,T]\times \mathbb{R}^{d}\mathbb{\longrightarrow } \mathbb{R}^{d}$ in $L_p^q$. Then for any $0 \leq t \leq T$ there exists a subsequence of the corresponding strong solutions $X_{n_j,t} = Y_t^{b_{n_j}}$, $j=1,2...$, such that 
$$
Y^{b_{n_j}}_t \longrightarrow Y^b_t
$$ 
for $j\rightarrow \infty$ in $L^2(\Omega)$. In particular this implies $Y_{t}^{b}\in L^{2}(\Omega ),$ $0\leq t\leq T$.
\end{lemm}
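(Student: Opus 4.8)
The plan is to obtain the $L^2$-convergence by combining the relative compactness provided by Corollary~\ref{relcompact} with the identification of the limit through the $S$-transform estimate of Lemma~\ref{diffestimate}. Fix $t\in[0,T]$. By Corollary~\ref{relcompact} the sequence $Y_t^{b_n}$, $n\geq 1$, is relatively compact in $L^2(\Omega;\R^d)$, so one may extract a subsequence $\{n_j\}_{j\geq 1}$ and a random variable $Z_t\in L^2(\Omega;\R^d)$ with $Y_t^{b_{n_j}}\to Z_t$ in $L^2(\Omega)$ as $j\to\infty$. Since the inclusion $L^2(\Omega)\hookrightarrow(\mathcal{S})^*$ is continuous, this convergence also holds in $(\mathcal{S})^*$. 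On the other hand $Y_t^b\in(\mathcal{S})^*$ by Lemma~\ref{hidadistr}, whose hypothesis \eqref{intcond} is satisfied for $b\in L_p^q$ thanks to Lemma~\ref{expmoments}. Everything thus reduces to proving $Z_t=Y_t^b$; once this is known, $Y_t^b\in L^2(\Omega)$ and $Y_t^{b_{n_j}}\to Y_t^b$ in $L^2(\Omega)$ follow immediately.

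To identify the limit I would invoke Lemma~\ref{diffestimate}, which gives $Y_t^{b_n}\to Y_t^b$ in $(\mathcal{S})^*$ for every $t$ as soon as the uniform bound \eqref{supbound} holds and $E[J_n]\to 0$. The bound \eqref{supbound} is a consequence of $\sup_{n\geq 0}\|b_n\|_{L_p^q}<\infty$ (which follows from $b_n\to b$ in $L_p^q$) together with the exponential moment estimate of Lemma~\ref{expmoments}, after fixing $\varepsilon>0$ so that \eqref{supbound} is in force.

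The core of the argument is the claim $E[J_n]\to 0$, with $J_n$ as in \eqref{Jn}; it is enough to treat each summand. Since $b_n\to b$ in $L_p^q$ one has $|b_n-b|^2\in L_{p/2}^{q/2}$ with $\||b_n-b|^2\|_{L_{p/2}^{q/2}}=\|b_n-b\|_{L_p^q}^2\to 0$, and because $\frac{d}{p/2}+\frac{2}{q/2}=2\bigl(\frac{d}{p}+\frac{2}{q}\bigr)<2$ the Krylov--Khasminskii estimate (\cite[Lemma 3.2]{KR05}; see also the proof of Lemma~\ref{expmoments}) yields
\[
E_{\tilde\mu}\Bigl[\int_0^T|b_n(s,\tilde B_s)-b(s,\tilde B_s)|^2\,ds\Bigr]\lesssim \|b_n-b\|_{L_p^q}^2\longrightarrow 0 .
\]
As $p_\varepsilon/2<1$, Jensen's inequality for concave functions disposes of the terms $2|\int_0^T(b_n^{(j)}(s,\tilde B_s^{(j)})-b^{(j)}(s,\tilde B_s^{(j)}))^2\,ds|^{p_\varepsilon/2}$. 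For the remaining terms $|\int_0^T(b^{(j)}(s,\tilde B_s^{(j)})^2-b_n^{(j)}(s,\tilde B_s^{(j)})^2)\,ds|^{p_\varepsilon}$, factoring $a^2-c^2=(a-c)(a+c)$ and applying the Cauchy--Schwarz inequality first in $s$ and then in $\omega$ bounds the expectation by $E[(\int_0^T|b-b_n|^2\,ds)^{p_\varepsilon}]^{1/2}\,E[(\int_0^T|b+b_n|^2\,ds)^{p_\varepsilon}]^{1/2}$, where for brevity $|b\pm b_n|$ stands for $|b(s,\tilde B_s)\pm b_n(s,\tilde B_s)|$. The second factor is bounded uniformly in $n$ by the uniform exponential moments (obtained from $\sup_n\|b_n\|_{L_p^q}<\infty$ and Lemma~\ref{expmoments}), while the first tends to $0$ because $\int_0^T|b-b_n|^2\,ds\to 0$ in $L^1(\tilde\mu)$ and the family $\{(\int_0^T|b-b_n|^2\,ds)^{p_\varepsilon}\}_{n}$ is uniformly integrable, again thanks to those exponential moments. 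Hence $E[J_n]\to 0$.

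Putting the pieces together, along $\{n_j\}$ one has both $Y_t^{b_{n_j}}\to Z_t$ and, by Lemma~\ref{diffestimate}, $Y_t^{b_{n_j}}\to Y_t^b$ in $(\mathcal{S})^*$, so $Z_t=Y_t^b$ by uniqueness of limits in $(\mathcal{S})^*$; this gives $Y_t^b\in L^2(\Omega)$ and $Y_t^{b_{n_j}}\to Y_t^b$ in $L^2(\Omega)$. (Since moreover every $L^2$-convergent subsequence of $\{Y_t^{b_n}\}_{n\geq 1}$ must have the same limit $Y_t^b$ while the family is relatively compact, the whole sequence in fact converges to $Y_t^b$ in $L^2(\Omega)$.) The step I expect to be the genuine obstacle is $E[J_n]\to 0$: this is exactly where $L_p^q$-approximation of the drift must be upgraded to $L^1$-convergence of the nonlinear Brownian functionals entering $J_n$, which forces the combination of Krylov-type bounds with the uniform integrability coming from the uniform exponential moments. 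The identification $Z_t=Y_t^b$, while conceptually the heart of the method, is then routine once the embedding $L^2(\Omega)\hookrightarrow(\mathcal{S})^*$ and Lemma~\ref{diffestimate} are available.
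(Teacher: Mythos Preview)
Your proposal is correct and follows essentially the same strategy as the paper: extract an $L^2$-convergent subsequence via Corollary~\ref{relcompact}, then identify the limit as $Y_t^b$ by showing $E[J_n]\to 0$ and invoking Lemma~\ref{diffestimate} together with uniqueness of limits in $(\mathcal{S})^*$. The only minor difference is in the bookkeeping for the second term of $J_n$: the paper derives a direct bound $E[\int_0^T |f(s,\tilde B_s)|^{2(1+\delta)}\,ds]\lesssim \|f\|_{L_p^q}$ via H\"older against the heat kernel and then uses it with $f=b\pm b_n$, whereas you route the same estimate through uniform exponential moments (Lemma~\ref{expmoments}) and a uniform-integrability argument; both are valid and equivalent in effect.
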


\begin{proof}
By Corollary \ref{relcompact} we know that there exists a subsequence $Y_t^{b_{n_j}}$, $j\geq 1$, converging in $L^2(\Omega)$. Further, we need to show that $E[J_{n_j}] \rightarrow 0$ as $j\to \infty$ with $J_{n_j}$ as in (\ref{Jn}). To this end, observe that for a function $f\in L_p^q$ one has
$$E\left[ \int_0^T f(s,\tilde{B}_s)ds\right] = \int_0^T (2\pi s)^{-d/2} \int_{\R^d} f(s,z) e^{-|z|^2 / (2s)}dzds.$$

Then by using H\"{o}lder's inequality with respect to $z$ and then to $s$ we see that for any \mbox{$p',q'\in [1,\infty]$} satisfying
$$\frac{d}{p'}+\frac{2}{q'}<2,$$
we have
$$E\left[ \int_0^T f(s,\tilde{B}_s)ds\right] \leq C \|f\|_{L_{p'}^{q'}},$$
where $C$ is a constant depending on $T,d,p',q'$. Then from condition (\ref{pqcondition}), since $p,q>2$ we can find an $\delta\in [0,1)$ small enough so that $p,q>2(1+\delta)$. For these $p,q$ define $p' := \frac{p}{2(1+\delta)}\geq 1$ and $q:=\frac{q'}{2(1+\delta)}>1$ and apply the above estimate to $|f|^{2(1+\delta)}$ to obtain
\begin{align}\label{Lpqbound}
E\left[ \int_0^T |f(s,\tilde{B}_s)|^{2(1+\delta)} ds\right] \leq C \|f\|_{L_p^q}.
\end{align}

Now since $b_n^{(j)}-b^{(j)}\in L_p^q$ for every $j=1,\dots,d$ and $0<\frac{1+\varepsilon}{2}<1$ we have

$$E\left[ \left(\int_0^T (b_n^{(j)}(s,\tilde{B}_s^{(j)}) - b^{(j)}(s,\tilde{B}_s^{(j)}))^2ds\right)^{\frac{1+\varepsilon}{2}}\right] \leq E\left[ \int_0^T (b_n^{(j)}(s,\tilde{B}_s^{(j)}) - b^{(j)}(s,\tilde{B}_s^{(j)}))^2ds\right]^{\frac{1+\varepsilon}{2}}$$
which goes to zero by the above estimate (\ref{Lpqbound}) by just taking the case where $\delta=0$.

Finally, for the the second term in $E[J_{n_j}]$ we have

\begin{align*}
&E\Bigg[\bigg|\int_0^T (b^{(j)}(s,\tilde{B}_s^{(j)})^2 - b_n^{(j)}(s,\tilde{B}_s^{(j)})^2)ds\bigg|^{1+\varepsilon}\Bigg]\\
&\leq T^{\varepsilon} E\Bigg[\int_0^T (b^{(j)}(s,\tilde{B}_s^{(j)}) + b_n^{(j)}(s,\tilde{B}_s^{(j)}))^{1+\varepsilon}(b^{(j)}(s,\tilde{B}_s^{(j)}) - b_n^{(j)}(s,\tilde{B}_s^{(j)}))^{1+\varepsilon}ds\Bigg]\\
&\leq T^{\varepsilon} \int_0^T E\left[(b^{(j)}(s,\tilde{B}_s^{(j)}) + b_n^{(j)}(s,\tilde{B}_s^{(j)}))^{2(1+\varepsilon)}\right]^{1/2}E\left[(b^{(j)}(s,\tilde{B}_s^{(j)}) - b_n^{(j)}(s,\tilde{B}_s^{(j)}))^{2(1+\varepsilon)}\right]^{1/2}ds\\
&\leq T^{\varepsilon} E\left[\int_0^T (b^{(j)}(s,\tilde{B}_s^{(j)}) + b_n^{(j)}(s,\tilde{B}_s^{(j)}))^{2(1+\varepsilon)} ds\right]^{1/2}E\left[\int_0^T (b^{(j)}(s,\tilde{B}_s^{(j)}) - b_n^{(j)}(s,\tilde{B}_s^{(j)}))^{2(1+\varepsilon)} ds\right]^{1/2}.
\end{align*}
Then since $b^{(j)}+b_n {(j)}\in L_p^q$ for every $n\geq 0$ we have
$$\sup_{n\geq 0} E\left[\int_0^T (b^{(j)}(s,\tilde{B}_s^{(j)}) + b_n^{(j)}(s,\tilde{B}_s^{(j)}))^{2(1+\varepsilon)} ds\right]^{1/2} <\infty
$$
for a sufficiently small $\varepsilon \in (0,1)$ by Lemma \ref{lemma1} and
$$
E\left[\int_0^T (b^{(j)}(s,\tilde{B}_s^{(j)}) - b_n^{(j)}(s,\tilde{B}_s^{(j)}))^{2(1+\varepsilon)} ds\right]^{1/2} \to 0
$$
as $n\to \infty$ by estimate (\ref{Lpqbound}) for a sufficiently small $\varepsilon>0$.

Thus, by Lemma \ref{diffestimate}, $Y^{b_{n_j}}_t \rightarrow Y^b_t $ as $j\to \infty$ in  $(\mathcal{S})^*$. But then, by uniqueness of the limit, also $Y^{b_{n_j}}_t \rightarrow Y^b_t$ in $L^2(\Omega)$.
\end{proof}

\begin{rem} \label{rmkL2}
It follows from the above proof that $Y_t^{b_n} \to Y_t^b$ as $n\to \infty$ in $L^2(\Omega;\R^d)$ for all $t$ and $x$.
\end{rem}

In fact, Lemma \ref{squareint} enables us now to state the following "transformation property" for $Y_t^b$.

\begin{lemm}
\label{translemma} Assume that $b:[0,T]\times \mathbb{R}^{d}\mathbb{%
\longrightarrow R}^{d}$ is in $L_p^q$. Then%
\begin{equation}
\varphi ^{(i)}\left( t,Y_{t}^{b}\right) =E_{\widetilde{\mu }}\left[ \varphi
^{(i)}\left( t,\widetilde{B}_{t}\right) \mathcal{E}_{T}^{\diamond }(b)\right]
\label{transprop}
\end{equation}%
a.e. for all $0\leq t\leq T,$ $i=1,\ldots,d$ and $\varphi =(\varphi
^{(1)},\ldots,\varphi ^{(d)})$ such that $\varphi (B_{t})\in L^{2}(\Omega ;\mathbb{%
R}^{d}).$
\end{lemm}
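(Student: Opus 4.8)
The plan is to reduce to a smooth, compactly supported drift, to use the representation formula of Proposition \ref{explicit} at that level, and then to pass to the limit with the help of the $L^2$- and $(\mathcal{S})^*$-convergences obtained in and around Lemma \ref{squareint}. First I would fix a sequence $b_n \in \mathcal{C}_0^\infty$ with $b_0 := b$, $\sup_{n\ge 0}\|b_n\|_{L_p^q} < \infty$ and $b_n \to b$ a.e., exactly as in Lemma \ref{estimate1} and Lemma \ref{squareint}. Each $b_n$ is bounded and Lipschitz, so Proposition \ref{explicit} yields, for every $\varphi$ with $\varphi(\widetilde{B}_t) \in L^2(\widetilde{\Omega};\R^d)$,
\begin{equation*}
\varphi^{(i)}\bigl(t,X_t^{n,x}\bigr) = E_{\widetilde{\mu}}\bigl[\varphi^{(i)}(t,\widetilde{B}_t)\,\mathcal{E}_T^\diamond(b_n)\bigr], \qquad i=1,\dots,d,
\end{equation*}
with $X_t^{n,x} = Y_t^{b_n}$. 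I would prove \eqref{transprop} first for $\varphi$ bounded and continuous and then remove this restriction.

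For such a $\varphi$, the left-hand side is handled directly: by Remark \ref{rmkL2}, $X_t^{n,x} \to Y_t^b$ in $L^2(\Omega;\R^d)$, so along a subsequence the convergence is $\mu$-a.s., and continuity and boundedness of $\varphi$ give $\varphi^{(i)}(t,X_t^{n,x}) \to \varphi^{(i)}(t,Y_t^b)$ in $L^2(\Omega)$, hence in $(\mathcal{S})^*$. For the right-hand side I would rerun the $S$-transform computation of Lemma \ref{diffestimate} with the bounded factor $\varphi^{(i)}(t,\widetilde{B}_t)$ replacing $\widetilde{B}_t^{(i)}$; the exponential-integrability hypothesis \eqref{supbound} holds along the sequence because $\sup_n\|b_n\|_{L_p^q}<\infty$ (Lemma \ref{expmoments}), and this gives
\begin{equation*}
\bigl|S\bigl(E_{\widetilde{\mu}}[\varphi^{(i)}(t,\widetilde{B}_t)(\mathcal{E}_T^\diamond(b_n)-\mathcal{E}_T^\diamond(b))]\bigr)(\phi)\bigr| \le \mathrm{const}\cdot\|\varphi^{(i)}\|_\infty\, E[J_n]^{1/p_\varepsilon}\exp\Bigl(2(8q_\varepsilon-1)\!\int_0^T|\phi(s)|^2\,ds\Bigr).
\end{equation*}
Since $E[J_n]\to 0$ (as in the proof of Lemma \ref{squareint}), it follows that $E_{\widetilde{\mu}}[\varphi^{(i)}(t,\widetilde{B}_t)\mathcal{E}_T^\diamond(b_n)] \to E_{\widetilde{\mu}}[\varphi^{(i)}(t,\widetilde{B}_t)\mathcal{E}_T^\diamond(b)]$ in $(\mathcal{S})^*$, and by uniqueness of the $(\mathcal{S})^*$-limit the two sides must coincide, yielding \eqref{transprop} as an $L^2(\Omega)$-identity for every bounded continuous $\varphi$.

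To pass to a general $\varphi$ with $\varphi(B_t) \in L^2(\Omega;\R^d)$, and recalling that $\widetilde{B}_t$ has the same law as $B_t$, I would choose bounded continuous $\varphi_m$ with $\varphi_m(\widetilde{B}_t)\to\varphi(\widetilde{B}_t)$ in $L^2(\widetilde{\mu})$. On the right-hand side, $S$-transforming and using Cauchy--Schwarz together with the uniform exponential integrability of $\mathcal{E}_T^\diamond(b)$ (Lemma \ref{expmoments}) bounds $|S(E_{\widetilde{\mu}}[(\varphi_m^{(i)}-\varphi^{(i)})(t,\widetilde{B}_t)\mathcal{E}_T^\diamond(b)])(\phi)|$ by $\|(\varphi_m^{(i)}-\varphi^{(i)})(\widetilde{B}_t)\|_{L^2(\widetilde{\mu})}\cdot\mathrm{const}\cdot\exp(\mathrm{const}\int_0^T|\phi|^2\,ds)$, so the right-hand sides converge in $(\mathcal{S})^*$ to $E_{\widetilde{\mu}}[\varphi^{(i)}(t,\widetilde{B}_t)\mathcal{E}_T^\diamond(b)]$. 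On the left-hand side I would use that, by Girsanov, the laws of the $X_t^{n,x}$ are absolutely continuous with densities that are uniformly integrable thanks to the uniform $L^r$-bounds on the corresponding Dol\'eans--Dade exponentials (Lemma \ref{expmoments}); hence $Y_t^b$ has an absolutely continuous law whose density is dominated by a constant times that of $B_t$, so $\varphi_m^{(i)}(t,Y_t^b) \to \varphi^{(i)}(t,Y_t^b)$ (along a subsequence $\mu$-a.s., and in $L^1(\Omega)$). Combining these limits with the identity of the previous step applied to each $\varphi_m$ gives \eqref{transprop} in full generality.

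The step I expect to be the main obstacle is the second one: verifying that the long chain of $S$-transform estimates in the proof of Lemma \ref{diffestimate} really does carry over with an arbitrary bounded continuous factor $\varphi^{(i)}(t,\widetilde{B}_t)$ in place of $\widetilde{B}_t^{(i)}$, so that the right-hand sides converge in $(\mathcal{S})^*$ and can be matched against the $L^2$-limit of the left-hand sides. A secondary technical point is the absolute-continuity and density control for $Y_t^b$ used in the last step; if one restricts to bounded continuous (or bounded Lipschitz) $\varphi$, which is all the subsequent arguments actually require, this point disappears.
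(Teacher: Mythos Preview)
The paper does not give its own proof here; it simply cites \cite[Lemma 16]{Pro07} and \cite{MBP06}. Your approximation-and-limit strategy via Proposition \ref{explicit}, Lemma \ref{diffestimate} and Lemma \ref{squareint} is exactly the natural route and is presumably what those references carry out, so in spirit you are aligned with the paper.

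Two points deserve tightening. First, Proposition \ref{explicit} as stated is coordinate-wise: it gives $\varphi(t,X_t^i)=E_{\widetilde{\mu}}[\varphi(t,\widetilde{B}_t^i)\mathcal{E}_T^\diamond(b_n)]$ for $\varphi:[0,T]\times\R\to\R$, whereas \eqref{transprop} concerns $\varphi^{(i)}(t,Y_t^b)$ with $\varphi^{(i)}:[0,T]\times\R^d\to\R$. You use the vector form without comment; the extension is routine (the proof of Proposition \ref{explicit} in \cite{LP04,MBP06} goes through for $\R^d$-valued arguments), but you should say so.

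Second, your closing remark that bounded continuous $\varphi$ ``is all the subsequent arguments actually require'' is not right: in the proof of Theorem \ref{thmainres1} the transformation property is applied with $\varphi=b$, which is only in $L_p^q$. So the general-$\varphi$ step is genuinely needed. Your density argument there is a bit loose: to control $\varphi_m^{(i)}(t,Y_t^b)-\varphi^{(i)}(t,Y_t^b)$ via Girsanov and Cauchy--Schwarz you end up needing $\varphi_m-\varphi\to 0$ in $L^r(\widetilde{\mu})$ for some $r>2$, not merely $L^2$. This is easily arranged (take bounded continuous approximants converging in $L^4$, say, which is possible since $\varphi(\widetilde{B}_t)$ has all moments under the $L_p^q$ hypothesis by Lemma \ref{expmoments}), but the argument as written does not close.
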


\begin{proof}
See \cite[Lemma 16]{Pro07} or \cite{MBP06}.
\end{proof}

Using the above auxiliary results we can finally give the proof of Theorem \ref{thmainres1}.

\begin{proof}[Proof of Theorem \protect\ref{thmainres1}]
We want to use the transformation property (\ref{transprop}) of Lemma~%
\ref{translemma} to show that $Y_{t}^{b}$ is a unique strong solution of
the SDE (\ref{eqmain1}). To shorten notation we set $\int_{0}^{t}\varphi
(s,\omega )dB_{s}:=\sum_{j=1}^{d}\int_{0}^{t}\varphi ^{(j)}(s,\omega
)dB_{s}^{(j)}$ and $x=0.$ Also, let $b_n$, $n=1,2,...$, be a sequence of functions as required in Lemma \ref{squareint}.

We comment on that $Y_{\cdot }^{b}$ has a continuous modification. The
latter can be seen as follows: Since each $Y_{t}^{b_{n}}$ is a strong
solution of the SDE (\ref{eqmain1}) with respect to the drift $b_{n}$ we
obtain from Girsanov's theorem and our assumptions that%
\begin{align*}
E_{\mu }\left[ \left( Y_{t}^{i,b_{n}}-Y_{u}^{i,b_{n}}\right) ^{4}\right]
&=E_{\widetilde{\mu }}\left[ \left( \widetilde{B}_{t}^{(i)}-\widetilde{B}%
_{u}^{(i)}\right) ^{4}\mathcal{E}\left( \int_{0}^{T}b_{n}(s,\widetilde{B}%
_{s})d\widetilde{B}_{s}\right) \right] \\
&\leq const\cdot \left\vert t-u\right\vert^2
\end{align*}%
for all $0\leq u,t\leq T$, $n\geq 1$, $i=1,...,d.$ The above constant comes from the fact that $\left\{ \mathcal{E}\left( \int_{0}^{T}b_{n}(s,\widetilde{B}_{s})d\widetilde{B}_{s}\right) \right\}_{n \geq 1}$ is bounded in $L^2(\Omega;\mathbb{R}^d)$ with respect to the measure $\tilde{\mu}$, see Lemma 3.2. in \cite{KR05} or Lemma \ref{expmoments}.

By Remark \ref{rmkL2} we know that%
\begin{equation*}
Y_{t}^{b_{n}}\longrightarrow Y_{t}^{b}\text{ in }L^{2}(\Omega ;\mathbb{R}%
^{d})
\end{equation*}%
and hence we have almost sure convergence for a further subsequence, $0\leq t\leq T$. So we get that by Fatou's lemma %
\begin{equation}
E_{\mu }\left[ \left( Y_{t}^{i,b}-Y_{u}^{i,b}\right) ^{4}\right] \leq const
\cdot \left\vert t-u\right\vert^2  \label{Kolmogorov}
\end{equation}%
for all $0\leq u,t\leq T$, $i=1,...,d.$ Then Kolmogorov's lemma guarantees a
continuous modification of $Y_{t}^{b}$.

Since $\widetilde{B}_{t}$ is a weak solution of (\ref{eqmain1}) for the
drift $b(s,x)+\phi (s)$ with respect to the measure $d\mu ^{\ast }=\mathcal{E%
}\left( \int_{0}^{T}\left( b(s,\widetilde{B}_{s})+\phi (s)\right) d%
\widetilde{B}_{s}\right) d\mu $ we get that%
\begin{align*}
S(Y_{t}^{i,b})(\phi )&=E_{\widetilde{\mu }}\left[ \widetilde{B}_{t}^{(i)}%
\mathcal{E}\left( \int_{0}^{T}\left( b(s,\widetilde{B}_{s})+\phi (s)\right) d%
\widetilde{B}_{s}\right) \right] \\
& =E_{\mu ^{\ast }}\left[ \widetilde{B}_{t}^{(i)}\right] \\
& =E_{\mu ^{\ast }}\left[ \int_{0}^{t}\left( b^{(i)}(s,\widetilde{B}%
_{s})+\phi ^{(i)}(s)\right) ds\right] \\
& =\int_{0}^{t}E_{\widetilde{\mu }}\left[ b^{(i)}(s,\widetilde{B}_{s})%
\mathcal{E}\left( \int_{0}^{T}\left( b(u,\widetilde{B}_{u})+\phi (u)\right) d%
\widetilde{B}_{u}\right) \right] ds+S\left( B_{t}^{(i)}\right) (\phi ).
\end{align*}%
Thus the transformation property (\ref{transprop}) applied to $b$ yields
\begin{equation*}
S(Y_{t}^{i,b})(\phi )=S(\int_{0}^{t}b^{(i)}(u,Y_{u}^{i,b})du)(\phi
)+S(B_{t}^{(i)})(\phi ).
\end{equation*}%
Then it follows from the injectivity of the $S$-transform that 
\begin{equation*}
Y_{t}^{b}=\int_{0}^{t}b(s,Y_{s}^{b})ds+B_{t} \,.
\end{equation*}%
See Section \ref{mallcalgaus} in the Appendix.

The Malliavin differentiability of $Y_{t}^{b}$ comes from the fact that $Y_t^{i,b_n} \to Y_t^{i,b}$ in $L^2(\Omega)$ and%
\begin{equation*}
\sup_{n\geq 1} \| Y_{t}^{i,b_{n}}\|_{\mathbb{D}^{1,2}}\leq M<\infty
\end{equation*}%
for all $i=1,\ldots,d$ and $0\leq t\leq 1.$ See e.g. \cite{Nua10}.

On the other hand, using uniqueness in law, which is a consequence of Lemma \ref{lemma1} and Proposition 3.10, Ch. 5 in \cite{Kar98} we may apply, under our conditions, Girsanov's theorem to any other solution. Then the proof of Proposition \ref{explicit} (see e.g. \cite[Proposition 1]{PS91}) shows that any other solution necessarily coincides with $Y_t^b$.
\end{proof}

We conclude this section with a generalisation of Theorem \ref{thmainres1} to a class of
non-degenerate $d-$dimensional It\^{o}-diffusions$.$

\begin{thm}\label{generalsde}
Assume the time-homogeneous $\mathbb{R}^{d}-$valued SDE%
\begin{equation}
dX_{t}=b(X_{t})dt+\sigma (X_{t})dB_{t},\,\,\text{ }X_{0}=x\in \mathbb{R}^{d},%
\text{ }\,\,0\leq t\leq T,  \label{SDE}
\end{equation}%
where the coefficients $b:\mathbb{R}^{d}\longrightarrow \mathbb{R}^{d}$ and $%
\sigma :\mathbb{R}^{d}\longrightarrow \mathbb{R}^{d}\times $ $\mathbb{R}^{d}$%
are Borel measurable. Suppose that there exists a bijection $\Lambda :%
\mathbb{R}^{d}\longrightarrow \mathbb{R}^{d}$, which is twice continuously
differentiable. Let \mbox{$\Lambda _{x}:\mathbb{R}^{d}\longrightarrow L\left( 
\mathbb{R}^{d},\mathbb{R}^{d}\right)$} and $\Lambda _{xx}:\mathbb{R}%
^{d}\longrightarrow L\left( \mathbb{R}^{d}\times \mathbb{R}^{d},\mathbb{R}%
^{d}\right) $ be the corresponding derivatives of $\Lambda $ and assume that%
\begin{equation*}
\Lambda _{x}(y)\sigma (y)=id_{\mathbb{R}^{d}}\text{ for }y\text{ a.e.}
\end{equation*}%
as well as%
\begin{equation*}
\Lambda ^{-1}\text{ is Lipschitz continuous.}
\end{equation*}%
Require that the function $b_{\ast }:\mathbb{R}^{d}\longrightarrow \mathbb{R}%
^{d}$ given by 
\begin{align*}
b_{\ast }(x)&:=\Lambda _{x}\left( \Lambda ^{-1}\left( x\right) \right) 
\left[ b(\Lambda ^{-1}\left( x\right) )\right] \\
&\,\,\,+\frac{1}{2}\Lambda _{xx}\left( \Lambda ^{-1}\left( x\right) \right) \left[
\sum_{i=1}^{d}\sigma (\Lambda ^{-1}\left( x\right) )\left[ e_{i}\right]
,\sum_{i=1}^{d}\sigma (\Lambda ^{-1}\left( x\right) )\left[ e_{i}\right] %
\right]
\end{align*}%
satisfies the conditions of Theorem \ref{thmainres1}, where $e_{i},$ $%
i=1,\ldots,d$, is a basis of $\mathbb{R}^{d}.$ Then there exists a Malliavin
differentiable solution $X_{t}$ to (\ref{SDE}).
\end{thm}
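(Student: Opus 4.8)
The plan is to reduce the general SDE \eqref{SDE} to an equation of the type \eqref{eqmain1} via the It\^o transform $Y_t := \Lambda(X_t)$, apply Theorem \ref{thmainres1} to the transformed equation, and then transfer the solution and its Malliavin differentiability back through $\Lambda^{-1}$. First I would apply It\^o's formula to $Y_t = \Lambda(X_t)$ for a (hypothetical) solution $X_t$ of \eqref{SDE}: using $\Lambda_x(y)\sigma(y) = id_{\mathbb{R}^d}$ the diffusion coefficient of $Y$ collapses to the identity, and the drift of $Y$ becomes exactly $b_\ast(Y_t)$ with $b_\ast$ as defined in the statement. Conversely, since $\Lambda$ is a bijection with $C^2$-regularity and $\Lambda^{-1}$ Lipschitz, $X_t = \Lambda^{-1}(Y_t)$ recovers a solution of \eqref{SDE} from a solution $Y$ of
\begin{equation*}
dY_t = b_\ast(Y_t)\,dt + dB_t, \qquad Y_0 = \Lambda(x).
\end{equation*}
So the two solution concepts are in bijection, at least formally.

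Next, since $b_\ast$ is assumed to satisfy the hypotheses of Theorem \ref{thmainres1} (i.e. $b_\ast \in L_p^q$ for admissible $p,q$), that theorem provides a unique global strong solution $Y_t = Y_t^{b_\ast}$ of the transformed equation which is Malliavin differentiable for every $t$. I would then set $X_t := \Lambda^{-1}(Y_t)$. This process is $\{\mathcal{F}_t\}$-adapted because $\Lambda^{-1}$ is continuous and $Y$ is adapted, and substituting back through It\^o's formula (applied to $\Lambda^{-1}$, which is only Lipschitz, so one must either regularize $\Lambda^{-1}$ or invoke that $\Lambda$ itself is $C^2$ and argue with $X = \Lambda^{-1}(Y)$ directly via the chain rule on $\Lambda(X) = Y$) shows $X_t$ solves \eqref{SDE}. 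For the Malliavin differentiability of $X_t$: since $\Lambda^{-1}$ is Lipschitz, it is a.e. differentiable, and the chain rule for Lipschitz functions of Malliavin differentiable random vectors (see e.g. \cite{Nua10}) gives $X_t \in \mathbb{D}^{1,2}$ with $D_r X_t = \nabla\Lambda^{-1}(Y_t)\, D_r Y_t$, provided the law of $Y_t$ gives no mass to the (Lebesgue-null) non-differentiability set of $\Lambda^{-1}$ — which holds here because $Y_t$ has a density (its law is equivalent to that of $\Lambda(x) + B_t$ by Girsanov, using the exponential integrability from Lemma \ref{expmoments}).

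The main obstacle I anticipate is the low regularity of $\Lambda^{-1}$: it is merely Lipschitz, not $C^1$, so neither the It\^o formula nor the Malliavin chain rule applies to it in the classical form. The cleanest route around this is to avoid differentiating $\Lambda^{-1}$ altogether: work with the identity $\Lambda(X_t) = Y_t$ and the $C^2$-regularity of $\Lambda$. For It\^o's formula, one verifies directly that $X_t := \Lambda^{-1}(Y_t)$ satisfies \eqref{SDE} by applying It\^o to $\Lambda(\Lambda^{-1}(Y_t)) = Y_t$ — but this still formally needs $\Lambda^{-1}$ differentiable. A rigorous treatment approximates $\Lambda^{-1}$ by smooth maps, or uses the fact (standard in the It\^o--Tanaka literature, and implicit in \cite{FedFlan10,Zvon74,Ver79}) that a bi-Lipschitz-type change of variables with one side $C^2$ transfers strong solutions both ways; one must check that the Lebesgue-null exceptional sets in ``$\Lambda_x(y)\sigma(y) = id$ for $y$ a.e.'' and in the differentiability of $\Lambda^{-1}$ are not charged by the occupation measure of $X$, which again follows from the density estimates available under our integrability assumptions. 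Once this change-of-variables bookkeeping is done carefully, the rest of the argument is a direct quotation of Theorem \ref{thmainres1}.
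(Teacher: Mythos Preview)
Your proposal is correct and follows essentially the same approach as the paper, which simply states that ``the proof can be directly obtained from It\^o's Lemma'' and refers to \cite{MBP10}. You are in fact more careful than the paper about the low-regularity issues with $\Lambda^{-1}$ (the It\^o formula and the Malliavin chain rule for Lipschitz transformations), which the paper does not spell out.
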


\begin{proof}
The proof can be directly obtained from It\^o's Lemma. See \cite{MBP10}.
\end{proof}
 

\section{Applications}  \label{applications}

\subsection{The Bismut-Elworthy-Li formula}
\label{SubSecKol}

As an application we want to use Theorem \ref{thmainres1} to derive a Bismut-Elworthy-Li formula for solutions $v$ to the Kolmogorov equation

\begin{equation} \label{kolmogorovEq}
\frac{\partial}{\partial t} v(t,x) = \sum_{j=1}^d b_j(t,x) \frac{\partial}{\partial x_j} v(t,x) + \frac{1}{2} \sum_{i=1}^d \frac{\partial^2}{\partial x_i^2 } v(t,x)
\end{equation}
with initial condition $v(0,x)= \Phi(x)$, where $b:[0,T] \times \mathbb{R}^d \rightarrow \mathbb{R}^d$ belongs to $L_p^q$.

It is known that, see \cite{KrylRock05} or \cite{FlanLN11}, that when $\Phi $ is continuous and bounded there exists a solution to (\ref{kolmogorovEq}) given by
\begin{equation} \label{StochRepr}
v(t,x) = E[\Phi(X_t^x)] ,
\end{equation}
where $v$ is a solution to the Kolmogorov Equation (\ref{kolmogorovEq}) which is unique among all bounded solutions in the space $H_{2,p}^q$, as introduced in Theorem \ref{theoremU}, with $p,q>2$ satisfying \eqref{pqcondition}. Moreover, $\frac{\partial}{\partial x} v \in L^\infty ([0,T]\times \R^d)$.

In the sequel, we aim at finding a representation for $\frac{\partial}{\partial x} v$ without using derivatives of $\Phi$. See \cite{MMNPZ10} in the case of $b\in L^\infty ([0,T]\times \R^d)$.

\begin{thm}[Bismut-Elworthy-Li formula] \label{kolmogorovDerivative}
Assume $\Phi \in C_b(\mathbb{R}^d)$ and let $U$ be an open, bounded subset of $\mathbb{R}^d$. Then the derivative of the solution to (\ref{kolmogorovEq}) can be represented as
\begin{equation} \label{derivativeFree}
\frac{\partial}{\partial x} v(t,x) = E[ \Phi(X_t^x) \int_0^t a(s) \left(\frac{\partial}{\partial x} X_s^x\right)^\ast dB_s]^\ast
\end{equation}
for almost all $x \in U$ and all $t \in (0,T]$, where $a=a_t$ is any bounded measurable function such that $\int_0^t a_t(s)ds=1$ and where $\ast$ denotes the transposition of matrices.
\end{thm}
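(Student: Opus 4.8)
The plan is to derive the Bismut-Elworthy-Li formula \eqref{derivativeFree} by first establishing it for the regularized drifts $b_n$, where the solutions $X_t^{n,x}$ are genuinely smooth in the initial datum, and then passing to the limit using the convergence and uniform integrability results collected in Section \ref{main results}. Concretely, fix $t \in (0,T]$ and $\Phi \in C_b(\R^d)$, and set $v_n(t,x) = E[\Phi(X_t^{n,x})]$. Since $b_n \in C_0^\infty$, the flow $x \mapsto X_s^{n,x}$ is differentiable, $\frac{\partial}{\partial x} X_s^{n,x}$ solves a linear SDE (as noted in Remark \ref{derivativebound}), and the classical Bismut-Elworthy-Li argument applies: one computes $\frac{\partial}{\partial x} v_n(t,x) = E[\nabla \Phi(X_t^{n,x}) \frac{\partial}{\partial x} X_t^{n,x}]$ for smooth $\Phi$, then uses the martingale $N_s := E[\nabla\Phi(X_t^{n,x})\frac{\partial}{\partial x}X_t^{n,x} \mid \mathcal{F}_s]$ together with an integration-by-parts (Malliavin/Bismut) identity and the weight $a_t$ with $\int_0^t a_t(s)\,ds = 1$ to rewrite this as
\begin{equation*}
\frac{\partial}{\partial x} v_n(t,x) = E\Big[ \Phi(X_t^{n,x}) \int_0^t a_t(s) \Big(\frac{\partial}{\partial x} X_s^{n,x}\Big)^\ast dB_s\Big]^\ast .
\end{equation*}
A standard density argument in $\Phi$ (approximating $\Phi \in C_b$ by smooth functions, using boundedness of $\Phi$ and the $L^2$-control of the stochastic integral weight) removes the smoothness requirement on $\Phi$ while keeping $b_n$ fixed.

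The next step is the passage $n \to \infty$. For the right-hand side: by Remark \ref{derivativebound} we have $\sup_n \sup_{s\le T} E[\|\frac{\partial}{\partial x}X_s^{n,x}\|^\alpha] < \infty$ for every $\alpha \ge 1$, so $\int_0^t a_t(s)(\frac{\partial}{\partial x}X_s^{n,x})^\ast dB_s$ is bounded in $L^2(\Omega)$ uniformly in $n$; combined with boundedness of $\Phi$, the family $\Phi(X_t^{n,x})\int_0^t a_t(s)(\frac{\partial}{\partial x}X_s^{n,x})^\ast dB_s$ is uniformly integrable. One then needs $X_t^{n,x} \to X_t^x = Y_t^b$ in $L^2(\Omega)$ (Remark \ref{rmkL2}, hence a.s.\ along a subsequence) and convergence of the stochastic integrals, which follows once $\frac{\partial}{\partial x}X_s^{n,x} \to \frac{\partial}{\partial x}X_s^x$ in $L^2([0,t]\times\Omega)$; the latter is obtained exactly as in Lemma \ref{estimate1} and Corollary \ref{relcompact} applied to the spatial derivative process instead of the Malliavin derivative (the two solve the same linear equation, cf.\ Remark \ref{derivativebound}), giving relative compactness in $L^2$, and identification of the limit via the $S$-transform as in Lemma \ref{squareint}. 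For the left-hand side, the representation $v_n(t,x) = E[\Phi(X_t^{n,x})] \to E[\Phi(X_t^x)] = v(t,x)$ shows $v_n \to v$ pointwise and boundedly; since also $\frac{\partial}{\partial x}v_n$ is bounded in $L^\infty([0,T]\times\R^d)$ uniformly in $n$ (Lemma \ref{Ureg} / the properties of $H_{2,p}^q$ recorded after \eqref{StochRepr}), $\frac{\partial}{\partial x}v_n \to \frac{\partial}{\partial x}v$ in the sense of distributions on the bounded open set $U$, and the distributional limit must equal the $L^2(\Omega)$-limit of the right-hand side; this forces \eqref{derivativeFree} for a.e.\ $x \in U$.

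The main obstacle is the last identification step: the convergence $\frac{\partial}{\partial x}v_n \to \frac{\partial}{\partial x}v$ is only weak (distributional) because $v$ is not known to be classically differentiable, whereas the right-hand side converges strongly in $L^2(\Omega)$ for each fixed $x$. Reconciling these requires showing that the pointwise-in-$x$ limit of the right-hand side is itself (locally) integrable in $x$ with the right regularity, matching the distributional derivative of $v$; this is where the restriction to $x \in U$ bounded and the ``almost all $x$'' in the statement come from. I would handle it by testing against $C_0^\infty(U)$ functions $\psi$: on one side $\int_U \frac{\partial}{\partial x}v_n(t,x)\psi(x)\,dx = -\int_U v_n(t,x)\frac{\partial}{\partial x}\psi(x)\,dx \to -\int_U v(t,x)\frac{\partial}{\partial x}\psi(x)\,dx$ by bounded convergence, and on the other side $\int_U E[\Phi(X_t^{n,x})\int_0^t a_t(s)(\frac{\partial}{\partial x}X_s^{n,x})^\ast dB_s]^\ast \psi(x)\,dx$ converges to the same expression with $n$ removed, using the uniform $L^2(\Omega\times U)$ bounds (uniform in both $n$ and $x$, which is exactly the content of the remarks after Lemma \ref{estimate1}) together with Fubini and dominated convergence. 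Equality of the two limits for all test functions $\psi$ then yields \eqref{derivativeFree} for a.e.\ $x \in U$, with $t \in (0,T]$ fixed; a continuity-in-$t$ remark upgrades this to all $t \in (0,T]$ if desired.
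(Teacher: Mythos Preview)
Your overall architecture---establish the Bismut--Elworthy--Li identity for the regularised coefficients $b_n$, pair against test functions in $C_0^\infty(U)$, and pass to the limit in the weak (distributional) sense in $x$---matches the paper's proof. The derivation of the formula for $v_n$ and the handling of the left-hand side via $\int_U \partial_x\psi\, v_n \to \int_U \partial_x\psi\, v$ are fine.

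The gap is in your passage to the limit on the right-hand side. You assert that $\frac{\partial}{\partial x}X_s^{n,x}\to\frac{\partial}{\partial x}X_s^{x}$ strongly in $L^2([0,t]\times\Omega)$, and you justify this by saying one can run ``Lemma~\ref{estimate1} and Corollary~\ref{relcompact} applied to the spatial derivative process instead of the Malliavin derivative''. But the compactness criterion (Corollary~\ref{compactcrit}) requires uniform control of the Malliavin derivative \emph{of the sequence you want compact}. For the sequence $\frac{\partial}{\partial x}X_s^{n,x}$ this means bounds on $D_r\frac{\partial}{\partial x}X_s^{n,x}$---a mixed second-order object---which is nowhere established in Section~\ref{main results} and would demand more regularity of $U_n$ than Theorem~\ref{theoremU} provides. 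The observation in Remark~\ref{derivativebound} that $\frac{\partial}{\partial x}X^{n,x}$ and $D_rX^{n,x}$ solve the same linear equation only yields the uniform $L^\alpha$ bounds \emph{on} $\frac{\partial}{\partial x}X^{n,x}$; it does not give you bounds on its Malliavin derivative. Likewise, the $S$-transform identification in Lemma~\ref{squareint} is tailored to the explicit Hida-space representation of $Y_t^{b}$ and has no analogue for the derivative process.

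The paper circumvents exactly this obstacle. After replacing $\Phi(X_t^{n,x})$ by $\Phi(X_t^{x})$ (which costs only $\|X_t^{n,x}-X_t^x\|_{L^2}$ times the uniform $L^2$ bound on $\frac{\partial}{\partial x}X_s^{n,x}$), it applies the Clark--Ocone formula to $\Phi(X_t^x)$, using the Malliavin differentiability of $X_t^x$ from Theorem~\ref{thmainres1}. Via the It\^o isometry the expectation then becomes
\[
\int_0^t a(s)\int_{\R^d}\varphi(x)\,E\!\left[D_s\Phi(X_t^x)\,\tfrac{\partial}{\partial x}X_s^{n,x}\right]dx\,ds,
\]
which is a genuine $L^2$ pairing against $\frac{\partial}{\partial x}X_s^{n,x}$. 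Now only \emph{weak} convergence of $\frac{\partial}{\partial x}X_s^{n,x}$ in $L^2([0,T]\times U\times\Omega)$ is needed, and that is immediate from the uniform bounds of Remark~\ref{derivativebound}. This Clark--Ocone step is the missing idea in your argument.
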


\begin{proof}
The proof is similar to Theorem 2 in \cite{MBP10} in the case of $b\in L^\infty ([0,T]\times \R^d)$. For the convenience of the reader we give the full proof.

Assume that $\Phi \in C^2_b(\mathbb{R}^d)$ (the general case of $\Phi \in C_b(\mathbb{R}^d)$ can be proved by approximation of $\Phi$ in relation (\ref{integralDerivative})) and let $b_n$ and $X_t^{n,x}$ be as in the previous section. If we replace $b$ by $b_n$ in (\ref{kolmogorovEq}) we have the unique solution given by 
$$
v_n(t,x)= E[\Phi(X_t^{n,x})] .
$$
By using Remark \ref{rmkL2} we see that $v_n(t,x) \rightarrow v(t,x)$ for each $t$ and $x$.

By \cite [Page 109] {Nua10} we have that 
$$
D_sX_t^{n,x} \frac{\partial}{\partial x} X_s^{n,x} = \frac{\partial}{\partial x} X_t^{n,x} ,
$$
where the above product is the usual matrix product. So it follows that
\begin{equation}
\frac{\partial}{\partial x} X_t^{n,x} =  \int_0^t a(s) D_sX_t^{n,x} \frac{\partial}{\partial x} X_s^{n,x} ds .
\end{equation}
Interchanging integration and differentiation in connection with the chain rule we find that
\begin{align*}
\frac{\partial}{\partial x} v_n(t,x) &=  E[ \Phi'(X_t^{n,x}) \frac{\partial}{\partial x} X_t^{n,x}] \\
&= E[\int_0^t a(s) D_s \Phi(X_t^{n,x}) \frac{\partial}{\partial x} X_s^{n,x} ds ] \\
&= E[ \Phi(X_t^{n,x}) \int_0^t a(s) \left(\frac{\partial}{\partial x} X_s^{n,x}\right)^\ast dB_s]^\ast , \\
\end{align*}
where we applied the chain rule and the duality formula for the Malliavin derivative to the last equality.

Choose $\varphi \in C^{\infty}_0(U)$. In what follows, we will prove that 
\begin{equation} \label{integralDerivative}
\int_{\mathbb{R}^d} \frac{\partial}{\partial x} \varphi(x) v(t,x) dx = - \int_{\mathbb{R}^d} \varphi(x) E[ \Phi(X_t^x) \int_0^t a(s) \left(\frac{\partial}{\partial x} X_s^x\right)^\ast dB_s]^\ast dx .
\end{equation}
In fact, dominated convergence combined with Remark \ref{rmkL2} gives 
\begin{align*}
\int_{\mathbb{R}^d} \frac{\partial}{\partial x} \varphi(x) v(t,x) dx & = - \lim_{n \rightarrow \infty} \int_{\mathbb{R}^d} \varphi(x) E[ \Phi(X_t^{n,x}) \int_0^t a(s) \left(\frac{\partial}{\partial x} X_s^{n,x}\right)^\ast dB_s]^\ast dx \\
&  =    - \lim_{n \rightarrow \infty} \int_{\mathbb{R}^d} \varphi(x) E[ \left( \Phi(X_t^{n,x}) - \Phi(X_t^x) \right) \int_0^t a(s) \left(\frac{\partial}{\partial x} X_s^{n,x}\right)^\ast dB_s]^\ast dx \\
& -  \lim_{n \rightarrow \infty} \int_{\mathbb{R}^d} \varphi(x) E[ \Phi(X_t^x) \int_0^t a(s) \left(\frac{\partial}{\partial x} X_s^{n,x}\right)^\ast dB_s]^\ast dx \\
 & = - \lim_{n \rightarrow \infty} i)_n -  \lim_{n \rightarrow \infty} ii)_n .\\
\end{align*}

As for the first term we get
$$
i)_n \leq \int_{\mathbb{R}^d} | \varphi(x)| \|\frac{\partial}{\partial x} \Phi \|_{\infty} \|X_t^{n,x} - X_t^x\|_{L^2(\Omega; \mathbb{R}^d)} \|a\|_{\infty} \left( \sup_{k \geq 1, s \in [0,T]} E[\| \frac{\partial}{\partial x} X_s^{k,x} \|^2_{\mathbb{R}^{d \times d}} ] \right)^{1/2} dx ,
$$
which goes to zero as $n$ tends to infinity by Lebesque dominated convergence theorem, Remark \ref{rmkL2} and Remark \ref{derivativebound}.

For the second term, $ii)_n$ since $X_t^x$ is \emph{Malliavin differentiable} and $\Phi \in C^2_b(\mathbb{R}^d)$ it follows from the Clark-Ocone formula that (see e.g. \cite{Nua10})
$$
\Phi(X_t^x) = E[\Phi(X_t^x)] + \int_0^t E[D_s \Phi(X_t^x) | \mathcal{F}_s] dB_s.
$$
So
\begin{align} 
\label{clarkoconeArgument1} ii)_n = & \int_{\mathbb{R}^d} \varphi(x) E[ \Phi(X_t^x) \int_0^t a(s) \left(\frac{\partial}{\partial x} X_s^{n,x}\right)^\ast dB_s ]^\ast dx \\
\label{clarkoconeArgument2} = & \int_{\mathbb{R}^d} \varphi(x) E[ \left( E[\Phi(X_t^x)] + \int_0^t E[D_s \Phi(X_t^x) | \mathcal{F}_s] dB_s \right) \int_0^t  a(s) \left(\frac{\partial}{\partial x} X_s^{n,x}\right)^\ast dB_s ]^\ast dx \\
\label{clarkoconeArgument3} =  &\int_0^t a(s)  \int_{\mathbb{R}^d} \varphi(x) E[ D_s \Phi(X_t^x)  \frac{\partial}{\partial x} X_s^{n,x} ]dx ds .
\end{align}
One checks by means of Lemma \ref{estimate1} that $\varphi(\cdot) D_s \Phi(X_t^{\cdot}) = \varphi(\cdot)  \Phi'(X_t^{\cdot}) D_sX_t^{\cdot}$ belongs to $L^2(\mathbb{R}^d \times \Omega ; \mathbb{R}^d)$ so that for each $s$, the function
$$
g_n(s)= \int_{\mathbb{R}^d} \varphi(x) E[ D_s \Phi(X_t^x) \frac{\partial}{\partial x} X_s^{n,x} ] dx
$$
converges to $\int_{\mathbb{R}^d} \varphi(x) E[ D_s \Phi(X_t^x) \frac{\partial}{\partial x} X_s^x ] dx$ by the weak convergence of $\frac{\partial}{\partial x} X_s^{n,x}$ in $L^2([0,T]\times U\times \Omega)$ for a subsequence in virtue of Remark \ref{derivativebound}. Further,
\begin{align*}
|g_n(s)| & \leq  \int_{\mathbb{R}^d} |\varphi(x)| \| D_s \Phi(X_t^x)\|_{L^2(\Omega; \mathbb{R}^d)} \| \frac{\partial}{\partial x} X_s^{n,x} \|_{L^2(\Omega; \mathbb{R}^d)} dx \\
 & \leq \sup_{y \in \mathbb{R}^d, \, u \leq t, \, k \in \mathbb{N}} \| D_u \Phi(X_t^y)\|_{L^2(\Omega; \mathbb{R}^d)} \| \frac{\partial}{\partial x} X_u^{k,y} \|_{L^2(\Omega; \mathbb{R}^d)} \int_{\mathbb{R}^d} |\varphi(x)| dx \\
\end{align*}
so that Lebesgue's dominated convergence theorem gives
$$
\lim_{n \rightarrow \infty} ii)_n =  \int_0^t a(s) \int_{\mathbb{R}^d} \varphi(x) E[ D_s \Phi(X_t^x) \frac{\partial}{\partial x} X_s^x ] dx ds .
$$
By reversing equations (\ref{clarkoconeArgument1}), (\ref{clarkoconeArgument2}) and (\ref{clarkoconeArgument3}) with $\frac{\partial}{\partial x} X_s^x$ in place of $\frac{\partial}{\partial x} X_s^{n,x}$ we obtain the result.
\end{proof}


\appendix
\section{Framework}\label{mallcalgaus}
 
In this appendix we collect some facts from Gaussian white noise analysis and
Malliavin calculus, which we shall use in Section \ref{main results} to construct strong solutions of SDE's. See \cite{HKPS93, Oba94, Kuo96} for
more information on white noise theory. As for Malliavin calculus the reader
may consult \cite{Nua10, Mall78, Mall97, DOP08}.

\subsection{Basic Facts of Gaussian White Noise Theory}\label{WN}

A crucial step in our proof for the constuction of strong solutions (see Section 3) relies on a generalised stochastic process in the Hida distribution space which is shown to be a SDE solution. Let us first recall the definition of this space which is due to T. Hida (see \cite{HKPS93}).

From now on we fix a time horizon $0<T<\infty .$ Let $A$ be a (positive) self-adjoint operator on $L^{2}([0,T])$ with $Spec(A)>1$. Require
that $A^{-r}$ is of Hilbert-Schmidt type for some $r>0$ and let $\{e_{j}\}_{j\geq 0}$ be a complete orthonormal basis of $L^{2}([0,T])$ in $%
Dom(A)$ and let $\lambda _{j}>0$, $j\geq 0$ be the eigenvalues of $A$ such
that%
\begin{equation*}
1<\lambda _{0}\leq \lambda _{1}\leq ...\longrightarrow \infty .
\end{equation*}%
Suppose that each basis element $e_{j}$ is a continuous function on $%
[0,T]$. Further let $O_{\lambda },\lambda \in \Gamma$, be an open covering
of $[0,T]$ such that%
\begin{equation*}
\sup_{j\geq 0}\lambda _{j}^{-\alpha (\lambda )}\sup_{t\in O_{\lambda
}}\left| e_{j}(t)\right| <\infty
\end{equation*}%
for $\alpha (\lambda )\geq 0.$

In the sequel let $\mathcal{S}([0,T])$ be the standard countably
Hilbertian space constructed from $(L^{2}([0,T]),A)$. See \cite{Oba94}. Then 
$\mathcal{S}([0,T])$ is a nuclear subspace of $L^{2}([0,T])$. The topological dual of $\mathcal{S}([0,T])$ is denoted by $\mathcal{S}^{\prime }([0,T])$. Then the Bochner-Minlos theorem entails the existence of a unique probability measure $\pi $ on $%
\mathcal{B}(\mathcal{S}^{\prime }([0,T]))$ (Borel $\sigma -$algebra of $\mathcal{S}^{\prime }([0,T])$) such that%
\begin{equation*}
\int_{\mathcal{S}^{\prime }([0,T])}e^{i\left\langle \omega ,\phi
\right\rangle }\pi (d\omega )=e^{-\frac{1}{2}\left\Vert \phi \right\Vert
_{L^{2}([0,T])}^{2}}
\end{equation*}%
for all $\phi \in \mathcal{S}([0,T]),$ where $\left\langle \omega
,\phi \right\rangle $ stands for the action of $\omega \in \mathcal{S}^{\shortmid
}([0,T])$ on $\phi \in \mathcal{S}([0,T]).$ Define
\begin{equation*}
\Omega _{i}=\mathcal{S}^{\prime }([0,T])\,,\ \ \mathcal{F}_{i}=\mathcal{B}%
(\mathcal{S}^{\prime }([0,T]))\,,\ \ \mu _{i}=\pi \,,
\end{equation*}%
for $i=1,\ldots,d$. Then the product measure%
\begin{equation}
\mu =\underset{i=1}{\overset{d}\times }\mu _{i}  \label{mu}
\end{equation}%
on the measurable space%
\begin{equation}
\left( \Omega ,\mathcal{F}\right) :=\left( \underset{i=1}{\overset{d}\prod}\Omega
_{i},\underset{i=1}{\overset{d}\otimes}\mathcal{F}_{i}\right)  \label{Meas}
\end{equation}%
is called $d$-\emph{dimensional white noise probability measure}.

Consider the Dol\'{e}ans-Dade exponential 
\begin{equation*}
\widetilde{e}(\phi ,\omega )=\exp \left( \left\langle \omega ,\phi
\right\rangle -\frac{1}{2}\left\| \phi \right\| _{L^{2}([0,T];\mathbb{R%
}^{d})}^{2}\right) ,
\end{equation*}%
for $\omega =(\omega _{1},\ldots,\omega _{d})\in (\mathcal{S}^{\prime
}([0,T]))^{d}$ and $\phi =(\phi ^{(1)},\ldots,\phi ^{(d)})\in (\mathcal{S}%
([0,T]))^{d}$, where \mbox{$\left\langle \omega ,\phi \right\rangle
:=\sum_{i=1}^{d}\left\langle \omega _{i},\phi _{i}\right\rangle$}.

Now let $\left( (\mathcal{S}([0,T]))^{d}\right) ^{\widehat{%
\otimes }n}$ be the $n-$th completed symmetric tensor product of $(\mathcal{S%
}([0,T]))^{d}$ with itself. One checks that $\widetilde{e}(\phi ,\omega )$
is holomorphic in $\phi $ around zero. Hence, there exist generalised Hermite
polynomials $H_{n}(\omega )\in \left( \left( (\mathcal{S}([0,T]))^{d}\right)
^{\widehat{\otimes }n}\right) ^{\prime }$ such that 
\begin{equation}
\widetilde{e}(\phi ,\omega )=\sum_{n\geq 0}\frac{1}{n!}\left\langle
H_{n}(\omega ),\phi ^{\otimes n}\right\rangle   \label{powerexpansion}
\end{equation}%
for $\phi $ in a certain neighbourhood of zero in $(\mathcal{S}([0,T]))^{d}.$
One proves that
\begin{equation}
\left\{ \left\langle H_{n}(\omega ),\phi ^{(n)}\right\rangle :\phi ^{(n)}\in
\left( (\mathcal{S}([0,T]))^{d}\right) ^{\widehat{\otimes }n},\text{ }n\in 
\mathbb{N}_{0}\right\} 
\end{equation}%
is a total set of $L^{2}(\Omega ).$ Further, it can be shown that the generalised Hermite polynomials satisfy the orthogonality relation 
\begin{equation}
\int_{\mathcal{S}^{\prime }}\left\langle H_{n}(\omega ),\phi
^{(n)}\right\rangle \left\langle H_{m}(\omega ),\psi ^{(m)}\right\rangle \mu
(d\omega )=\delta _{n,m}n!\left( \phi ^{(n)},\psi ^{(n)}\right)
_{L^{2}([0,T]^{n};(\mathbb{R}^{d})^{\otimes n})}  \label{ortho}
\end{equation}%
for all $n,m\in \mathbb{N}_{0}$, $\phi ^{(n)}\in \left( (\mathcal{S}%
([0,T]))^{d}\right) ^{\widehat{\otimes }n},$ $\psi ^{(m)}\in \left( (%
\mathcal{S}([0,T]))^{d}\right) ^{\widehat{\otimes }m}$ where

\begin{equation*}
\delta _{n,m}=\left\{ 
\begin{array}{cc}
1 & \text{if }n=m \\ 
0 & \text{else}%
\end{array}%
\right. .
\end{equation*}%
Denote by $\widehat{L}^{2}([0,T]^{n};(\mathbb{R}^{d})^{\otimes n})$ the
space of square integrable symmetric functions $f(x_{1},\ldots,x_{n})$ with
values in $(\mathbb{R}^{d})^{\otimes n}.$ Then it follows from relation (%
\ref{ortho}) that the mappings 
\begin{equation*}
\phi ^{(n)}\longmapsto \left\langle H_{n}(\omega ),\phi ^{(n)}\right\rangle
\end{equation*}
from $\left( \mathcal{S([}0,T]\mathcal{)}^{d}\right) ^{\widehat{\otimes }n}$
to $L^{2}(\Omega )$ have unique continuous extensions%
\begin{equation*}
I_{n}:\widehat{L}^{2}([0,T]^{n};(\mathbb{R}^{d})^{\otimes n})\longrightarrow
L^{2}(\Omega )
\end{equation*}%
for all $n\in \mathbb{N}$. These extensions $I_{n}(\phi ^{(n)})$ can be identified as $n$-fold iterated It\^{o} integrals of $\phi ^{(n)}\in \widehat{L}%
^{2}([0,T]^{n};(\mathbb{R}^{d})^{\otimes n})$ with respect to a $d-$%
dimensional Wiener process%
\begin{equation}
B_{t}=\left( B_{t}^{(1)},\ldots,B_{t}^{(d)}\right)  \label{Bm}
\end{equation}%
on the white noise space 
\begin{equation}
\left( \Omega ,\mathcal{F},\mu \right) \,.
\end{equation}%
We mention that square integrable functionals of $B_{t}$ admit a Wiener-It%
\^{o} chaos representation which can be regarded as an infinite-dimensional
Taylor expansion, that is%
\begin{equation}
L^{2}(\Omega )=\bigoplus_{n\geq 0} I_{n}(\widehat{L}^{2}([0,T]^{n};(\mathbb{R}%
^{d})^{\otimes n})).  \label{chaosrepr}
\end{equation}

The definition of the Hida stochastic test function and distribution space is based on the Wiener-It\^{o} chaos decomposition (\ref{chaosrepr}): Set
\begin{equation}
A^{d}:=(A,\ldots,A)\, .  \label{operator}
\end{equation}
Using a second quantisation argument, the Hida stochastic test function space $(\mathcal{S})$ is defined as the space of all $f=\sum_{n\geq 0}\left\langle H_{n}(\cdot ),\phi
^{(n)}\right\rangle \in L^{2}(\Omega )$ such that%
\begin{equation}
\left\Vert f\right\Vert _{0,p}^{2}:=\sum_{n\geq 0}n!\left\Vert \left(
(A^{d})^{\otimes n}\right) ^{p}\phi ^{(n)}\right\Vert _{L^{2}([0,T]^{n};(%
\mathbb{R}^{d})^{\otimes n})}^{2}<\infty  \label{Hidaseminorm}
\end{equation}%
for all $p\geq 0$. In fact, the space $(\mathcal{S})$ is a nuclear
Fr\'{e}chet algebra with respect to multiplication of functions and its
topology is induced by the seminorms $\left\Vert \cdot \right\Vert _{0,p},$ $%
p\geq 0.$ Further one shows that%
\begin{equation}
\widetilde{e}(\phi ,\omega )\in (\mathcal{S})  \label{Doleans}
\end{equation}%
for all $\phi \in (\mathcal{S}([0,T]))^{d}.$

On the other hand, the topological dual of $(\mathcal{S})$, denoted by $(\mathcal{S})^{\ast
}$, is called \emph{Hida stochastic distribution space}. Using these definitions we ontain the Gel'fand triple%
\begin{equation*}
(\mathcal{S})\hookrightarrow L^{2}(\Omega )\hookrightarrow (\mathcal{S})^{\ast
}.
\end{equation*}%
It turns out that the \emph{white noise} of the coordinates of the $%
d-$dimensional Wiener process $B_{t},$ that is the time derivatives%
\begin{equation}
W_{t}^{i}:=\frac{d}{dt}B_{t}^{i},\text{ }i=1,\ldots,d\,,  \label{whitenoise}
\end{equation}%
belong to $(\mathcal{S})^{\ast }$.

We also recall the definition of the $S$-transform. See \cite{PS91}. The $S-$transform of a $\Phi
\in (\mathcal{S})^{\ast }$, denoted by $S(\Phi ),$ is defined by the dual
pairing 
\begin{equation}
S(\Phi )(\phi )=\left\langle \Phi ,\widetilde{e}(\phi ,\omega )\right\rangle
\label{Stransform}
\end{equation}%
for $\phi \in (\mathcal{S}_{\mathbb{C}}([0,T]))^{d}$. Here $\mathcal{S}_{%
\mathbb{C}}([0,T])$ the complexification of $\mathcal{S}([0,T])$. The $S-$transform is a monomorphism from $(\mathcal{S})^{\ast }$ to $%
\mathbb{C}$. In particular, if 
\begin{equation*}
S(\Phi )=S(\Psi )\text{ for }\Phi ,\Psi \in (\mathcal{S})^{\ast }
\end{equation*}%
then%
\begin{equation*}
\Phi =\Psi .
\end{equation*}%
As an example one finds that
\begin{equation}
S(W_{t}^{i})(\phi )=\phi ^{i}(t),\text{ }i=1,...,d\text{ }
\label{StransfvonW}
\end{equation}%
for $\phi =(\phi ^{(1)},\ldots,\phi ^{(d)})\in (\mathcal{S}_{\mathbb{C}%
}([0,T]))^{d}.$

Finally, we recall the concept of the \textsl{Wick} or \textsl{%
Wick-Grassmann product}. The Wick product defines a tensor algebra
multiplication on the Fock space and is introduced as follows: The Wick
product of two distributions $\Phi ,\Psi \in (\mathcal{S})^{\ast }$, denoted
by $\Phi \diamond \Psi ,$ is the unique element in $(\mathcal{S})^{\ast }$
such that%
\begin{equation}
S(\Phi \diamond \Psi )(\phi )=S(\Phi )(\phi )S(\Psi )(\phi )  \label{Sprop}
\end{equation}%
for all $\phi \in (\mathcal{S}_{\mathbb{C}}([0,T]))^{d}.$ As an example, we
get
\begin{equation}
\left\langle H_{n}(\omega ),\phi ^{(n)}\right\rangle \diamond \left\langle
H_{m}(\omega ),\psi ^{(m)}\right\rangle =\left\langle H_{n+m}(\omega ),\phi
^{(n)}\widehat{\otimes }\psi ^{(m)}\right\rangle  \label{Wick}
\end{equation}%
for $\phi ^{(n)}\in \left( (\mathcal{S}([0,T]))^{d}\right) ^{\widehat{%
\otimes }n}$ and $\psi ^{(m)}\in \left( (\mathcal{S}([0,T]))^{d}\right) ^{%
\widehat{\otimes }m}.$ The latter in connection with (\ref{powerexpansion})
implies that%
\begin{equation}
\widetilde{e}(\phi ,\omega )=\exp ^{\diamond }(\left\langle \omega ,\phi
\right\rangle )  \label{Wickident}
\end{equation}%
for $\phi \in (\mathcal{S}([0,T]))^{d}.$ Here the Wick exponential $\exp
^{\diamond }(X)$ of a $X\in (\mathcal{S})^{\ast }$ is defined as 
\begin{equation}
\exp ^{\diamond }(X)=\sum_{n\geq 0}\frac{1}{n!}X^{\diamond n},
\label{wickexp}
\end{equation}%
where $X^{\diamond n}=X\diamond \ldots \diamond X,$ provided that the sum on the right hand
side converges in $(\mathcal{S})^{\ast }$.

\subsection{Basic elements of Malliavin Calculus}\label{somedefmalcal}

In this section we pass in review some basic definitions from Malliavin calculus.

For convenience we consider the case $d=1$. Let $F\in L^{2}(\Omega). $ Then we know from (\ref{chaosrepr}) that 
\begin{equation}
F=\sum_{n\geq 0}\left\langle H_{n}(\cdot ),\phi ^{(n)}\right\rangle
\label{ch}
\end{equation}%
for unique $\phi ^{(n)}\in \widehat{L}^{2}([0,T]^{n}).$ Suppose that 
\begin{equation}
\sum_{n\geq 1}nn!\left\Vert \phi ^{(n)}\right\Vert
_{L^{2}([0,T]^{n})}^{2}<\infty \,.  \label{D1,2}
\end{equation}%
Then the \emph{Malliavin derivative} $D_{t}$ of $F$ in the direction of $B_{t}$
can be defined as
\begin{equation}
D_{t}F=\sum_{n\geq 1}n\left\langle H_{n-1}(\cdot ),\phi ^{(n)}(\cdot
,t)\right\rangle .  \label{Dt}
\end{equation}%
We denote by $\mathbb{D}^{1,2}$ the space of all $F\in L^{2}(\Omega )$ such that (\ref{D1,2}) holds. The Malliavin
derivative $D_{\cdot \text{ }}$ is a linear operator from $\mathbb{D}^{1,2}$
to $L^{2}([0,T] \times \Omega )$. We mention that $\mathbb{D}^{1,2}$ is a Hilbert space with the norm 
$\left\Vert \cdot \right\Vert _{1,2}$ given by%
\begin{equation}
\left\Vert F\right\Vert _{1,2}^{2}:=\left\Vert F\right\Vert _{L^{2}(\Omega, \mu
)}^{2}+\left\Vert D_{\cdot }F\right\Vert _{L^{2}([0,T]\times \Omega ,\lambda
\times \mu )}^{2}.  \label{norm1,2}
\end{equation}%
We get the following chain of continuous inclusions:%
\begin{equation}
(\mathcal{S})\hookrightarrow \mathbb{D}^{1,2}\hookrightarrow L^{2}(\Omega)\hookrightarrow \mathbb{D}^{-1,2}\hookrightarrow (\mathcal{S})^{\ast },
\label{Inclusion}
\end{equation}%
where $\mathbb{D}^{-1,2}$ is the dual of $\mathbb{D}^{1,2}.$

\section{Technical results} \label{App1}  

We give a list if technical results needed for the proofs of Section \ref{main results} and \ref{applications}.

\begin{lemm}\label{expmoments}
Let $\{f_n\}_{n\geq 0}$ be a bounded sequence of functions in $L_{p}^q$. Then, for every $k\in \R$
$$\sup_{x\in \R^d}\sup_{n\geq 0}E\left[ \exp\left\{k\int_0^T |f_n(s,x+B_s)|^2 ds\right\} \right]<\infty.$$
In particular, there exists a weak solution to SDE (\ref{eqmain1}).
\end{lemm}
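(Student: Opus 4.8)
The plan is to prove the exponential estimate by a Khasminskii-type argument resting on a Gaussian heat-kernel bound, and then to deduce the existence of a weak solution by Girsanov's theorem. First one reduces to $k>0$, since for $k\le 0$ the integrand is bounded by $1$. Write $K:=\sup_{n\ge 0}\|f_n\|_{L_p^q}<\infty$ and set $g_n(s,y):=k\,|f_n(s,y)|^2$, so that $g_n\in L_{p/2}^{q/2}$ with $\sup_{n\ge0}\|g_n\|_{L_{p/2}^{q/2}}=kK^2$; here the exponents $p/2,q/2$ are $>1$ and still subcritical, since $\frac{d}{p/2}+\frac{2}{q/2}=2\big(\frac{d}{p}+\frac{2}{q}\big)<2$ by \eqref{pqcondition}.

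The key step is the one-interval bound: there exist $C=C(d,p,q)>0$ and $\gamma=\gamma(d,p,q)\in(0,1)$, \emph{independent of $n$}, such that for all $0\le u\le v\le T$ and all $z\in\R^d$,
\[
E\Big[\int_u^v g_n\big(s,z+B_s-B_u\big)\,ds\Big]\le C\,kK^2\,|v-u|^{\gamma}.
\]
To see this, write the left-hand side as $\int_u^v\int_{\R^d} g_n(s,z+y)\,(2\pi(s-u))^{-d/2}e^{-|y|^2/(2(s-u))}\,dy\,ds$; apply H\"older in $y$ with exponents $p/2$ and $(p/2)'=\frac{p}{p-2}$, using that the Gaussian density at time $\tau$ has $L^{(p/2)'}$-norm $c_{d,p}\,\tau^{-d/p}$; then apply H\"older in $s$ with exponents $q/2$ and $(q/2)'=\frac{q}{q-2}$. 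The factor $\big(\int_u^v\|f_n(s,\cdot)\|_{L^p}^q\,ds\big)^{2/q}$ is bounded by $\|f_n\|_{L_p^q}^2\le K^2$, while the time integral $\int_u^v (s-u)^{-\frac{dq}{p(q-2)}}\,ds=\frac{(v-u)^{1-dq/(p(q-2))}}{1-dq/(p(q-2))}$ converges precisely because $\frac{dq}{p(q-2)}<1$, which is exactly \eqref{pqcondition}; this yields the stated power of $|v-u|$.

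Next, choose $N\in\N$ (depending only on $d,p,q,T,k,K$) large enough that $C\,kK^2\,(T/N)^{\gamma}\le\frac12$, and put $t_i=iT/N$. For $A^{n,x}_t:=\int_0^t g_n(s,x+B_s)\,ds$, condition successively on $\mathcal{F}_{t_{N-1}},\dots,\mathcal{F}_{t_1}$ and use the Markov property of $B$: each conditional expectation $E\big[\exp\big(\int_{t_i}^{t_{i+1}} g_n(s,x+B_s)\,ds\big)\,\big|\,\mathcal{F}_{t_i}\big]$ equals $h_i(x+B_{t_i})$, where $h_i(z)=E\big[\exp\big(\int_{t_i}^{t_{i+1}} g_n(s,z+B_s-B_{t_i})\,ds\big)\big]$. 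Expanding the exponential and using the classical Khasminskii identity $E[A^m]=m!\,E[\int_{t_i\le s_1\le\cdots\le s_m\le t_{i+1}} g_n(s_1,\cdot)\cdots g_n(s_m,\cdot)]$ together with the one-interval bound applied to each of the $m$ nested integrals, one gets $E[A^m]\le m!\,(CkK^2(T/N)^{\gamma})^m\le m!\,2^{-m}$, hence $\sup_z h_i(z)\le\sum_{m\ge0}2^{-m}=2$. Iterating over $i=N-1,\dots,0$ gives $\sup_{x}\sup_{n\ge0}E[\exp(A^{n,x}_T)]\le 2^{N}<\infty$, which is the assertion. The main obstacle here is genuinely the second step: matching the scaling exponents produced by the two H\"older inequalities so that the singular time integral converges is exactly where \eqref{pqcondition} is used, and keeping $C$ and $\gamma$ (hence $N$) independent of $n$ is what makes the supremum over $n$ finite.

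Finally, applying the estimate with $k=\tfrac12$ to the single function $b\in L_p^q$ gives $E\big[\exp\big(\tfrac12\int_0^T|b(s,x+B_s)|^2\,ds\big)\big]<\infty$, i.e. Novikov's condition holds. Therefore $\mathcal{E}\big(\int_0^{\cdot} b(s,x+B_s)\,dB_s\big)_T$ is a true martingale and defines an equivalent probability measure $\mu^{\ast}$ under which $\tilde B_t:=B_t-\int_0^t b(s,x+B_s)\,ds$ is a Brownian motion; then $X_t:=x+B_t$ satisfies $X_t=x+\int_0^t b(s,X_s)\,ds+\tilde B_t$ on $(\Omega,\mathcal{F},\mu^{\ast})$, so $(X,\tilde B)$ is a weak solution of \eqref{eqmain1}.
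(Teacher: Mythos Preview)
Your argument is correct: the heat-kernel H\"older bound combined with the Khasminskii recursion yields the uniform exponential estimate, and Novikov/Girsanov then gives the weak solution. The paper does not supply its own proof but cites \cite[Lemma~3.2]{KR05}, whose argument is precisely this Khasminskii-type scheme, so your approach coincides with the one the paper defers to.
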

\begin{proof}
See \cite[Lemma 3.2]{KR05}
\end{proof}

\begin{lemm}\label{lemma1}
Let $\{f_n\}_{n\geq 0}$ a sequence of elements in $L^{p,q}$ that converges to some $f\in L^{p,q}$. Then there exists $\varepsilon>1$ such that
\begin{align}
\sup_{n\geq 0}E\left[ \int_0^T \|f_n(s,\phi_s^n)\|^{2\varepsilon} ds \right] <\infty.
\end{align}
Here $\phi_s^n: x\mapsto X_t^{x,n}$ denotes the stochastic flow associated to the solution of the SDE (\ref{eqmain1}) with drift coefficient $b_n\in \mathcal{C}_b^\infty (\R^d)$.
\end{lemm}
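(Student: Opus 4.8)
The plan is to pass, via Girsanov's theorem, from the drift-perturbed flow $\phi_s^n(x)=X_s^{x,n}$ to a Brownian path, and then to invoke the Gaussian heat-kernel (Krylov-type) estimate already used in the proof of Lemma~\ref{squareint}. Throughout one uses that $\{f_n\}_{n\geq 0}$, being convergent in $L_p^q$, is bounded there, and that the relevant $\{b_n\}$ is the standing approximating sequence of Section~\ref{main results}, so $\sup_{n\geq 0}\|b_n\|_{L_p^q}<\infty$. Since each $b_n$ is bounded, Girsanov's theorem applies without any difficulty and gives, for every $\varepsilon\geq 1$,
\begin{equation*}
E\left[\int_0^T \|f_n(s,X_s^{x,n})\|^{2\varepsilon}\,ds\right]
= E\left[\left(\int_0^T \|f_n(s,x+B_s)\|^{2\varepsilon}\,ds\right)\mathcal{E}\!\left(\int_0^T b_n(u,x+B_u)\,dB_u\right)\right],
\end{equation*}
so that the problem is reduced to one about Brownian motion only.

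Next I would split the right-hand side by H\"{o}lder's inequality with conjugate exponents $\kappa,\kappa'>1$, both close to $1$. The Dol\'{e}ans-Dade factor is handled as in the proof of Lemma~\ref{diffestimate}: writing $\mathcal{E}(M)^{\kappa'}=\mathcal{E}(2\kappa'M)^{1/2}\exp\!\big((2(\kappa')^2-\kappa')\langle M\rangle_T\big)^{1/2}$ for $M_t=\int_0^t b_n(u,x+B_u)\,dB_u$ and using Cauchy-Schwarz together with the supermartingale bound $E[\mathcal{E}(2\kappa'M)]\leq1$, one is left with an exponential moment of $\int_0^T|b_n(u,x+B_u)|^2\,du$, which by Lemma~\ref{expmoments} is bounded uniformly in $n$ and $x$. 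For the other factor, Jensen's inequality in the time variable, $(\int_0^T g_s\,ds)^{\kappa}\leq T^{\kappa-1}\int_0^T g_s^{\kappa}\,ds$, reduces matters to estimating $E\big[\int_0^T\|f_n(s,x+B_s)\|^{2\varepsilon\kappa}\,ds\big]$, and this is precisely the estimate from the proof of Lemma~\ref{squareint} (the argument leading to \eqref{Lpqbound}, run with exponent $2\varepsilon\kappa$ in place of $2(1+\delta)$) applied to $\|f_n\|^{2\varepsilon\kappa}$: it yields a bound by $C_{d,p,q,T}\|f_n\|_{L_p^q}^{2\varepsilon\kappa}$, uniform in $x$, provided $2\varepsilon\kappa\leq\min(p,q)$ and $2\varepsilon\kappa\big(\tfrac{d}{p}+\tfrac{2}{q}\big)<2$.

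The one point deserving attention --- and the reason the claim holds with some $\varepsilon>1$ rather than only $\varepsilon=1$ --- is the choice of exponents. Because $p,q>2$ and, crucially, $\tfrac{d}{p}+\tfrac{2}{q}<1$ \emph{strictly} by \eqref{pqcondition}, the admissible range $\varepsilon\kappa<\min\!\big(\tfrac12\min(p,q),(\tfrac{d}{p}+\tfrac{2}{q})^{-1}\big)$ has right-hand side strictly larger than $1$, so one may indeed fix $\varepsilon>1$ and $\kappa>1$ within it. With such a choice, combining the three bounds and using $\sup_{n\geq 0}\|f_n\|_{L_p^q}<\infty$ gives
\begin{equation*}
\sup_{x\in\R^d}\sup_{n\geq 0}E\left[\int_0^T \|f_n(s,\phi_s^n(x))\|^{2\varepsilon}\,ds\right]<\infty,
\end{equation*}
which is the assertion, in fact in a slightly stronger $x$-uniform form. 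I do not expect a genuine obstacle here: the only slightly delicate bookkeeping is keeping all the H\"{o}lder exponents mutually compatible, and if one prefers not to quote the computation inside the proof of Lemma~\ref{squareint}, the required heat-kernel estimate can be re-derived in two lines by H\"{o}lder in the space variable against the Gaussian density followed by H\"{o}lder in time.
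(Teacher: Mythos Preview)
Your argument is correct. The paper itself does not prove this lemma; it simply refers to \cite[Lemma 15]{FedFlan10}. What you propose is in fact the standard route (and essentially what the cited reference does): change measure via Girsanov to replace $X_s^{x,n}$ by $x+B_s$, peel off the Dol\'{e}ans-Dade exponential by H\"{o}lder and bound its moments through Lemma~\ref{expmoments}, and finish with the Krylov-type heat-kernel estimate $E\big[\int_0^T|g(s,x+B_s)|^{m}\,ds\big]\leq C\|g\|_{L_p^q}^{m}$ valid whenever $m(\tfrac{d}{p}+\tfrac{2}{q})<2$ and $m\leq\min(p,q)$. Your bookkeeping of the exponents is right, and the strict inequality in \eqref{pqcondition} is exactly what produces the room $\varepsilon>1$.

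Two minor remarks. First, when you quote ``the argument leading to \eqref{Lpqbound}'' be aware that that display sits inside the proof of Lemma~\ref{squareint}, which in turn cites the present lemma; you correctly note that the heat-kernel computation itself is self-contained, so there is no circularity, but in a write-up it is cleaner to state the estimate once as a stand-alone lemma (or to give the two-line derivation you sketch) rather than to point into a later proof. Second, your exponential identity $\mathcal{E}(M)^{\kappa'}=\mathcal{E}(2\kappa'M)^{1/2}\exp\!\big((2(\kappa')^2-\kappa')\langle M\rangle_T\big)^{1/2}$ is fine, though the more common bookkeeping writes $\mathcal{E}(M)^{\kappa'}=\mathcal{E}(\kappa'M)\exp\!\big(\tfrac12((\kappa')^2-\kappa')\langle M\rangle_T\big)$ and then applies Cauchy--Schwarz; either way the conclusion is the same.
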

\begin{proof}
See \cite[Lemma 15]{FedFlan10}.
\end{proof}

We also need the following crucial lemma, which can be found in \cite{FedFlan12}, Lemma 3.4.

\begin{lemm}\label{lemma2}
Let $U_n$ be the solution of the PDE (\ref{PDE(f)}) with $\Phi=b=b_n \in \mathcal{C}_b^\infty (\R^n)$. Let $X_t^{x,n}$ be the solution of the SDE (\ref{eqmain1}) with drift coefficient $b_n\in \mathcal{C}_b^\infty(\R^d)$. Then the following holds true
\begin{itemize}
\item[(i)] For each $r>0$ there exists a function $f$ with $\lim_n f(n) = 0$ such that
$$\sup_{x\in B_r} \sup_{t\in[0,T]} \|U_n(t,x) - U(t,x)\| \leq f(n)$$
and
$$\sup_{x\in B_r} \sup_{t\in[0,T]} \|\nabla U_n(t,x) - \nabla U(t,x)\| \leq f(n)$$
\item[(ii)] There exists a $\lambda \in \R$ for which $\displaystyle \sup_{\substack{t\in [0,T] \\ x\in \R^d}} \|\nabla U_n(t,x)\| \leq \frac{1}{2}$.
\item[(iii)] $\displaystyle \sup_{n\geq 0} \|\Delta U_n(t,x)\|_{L^{p,q}} <\infty$.
\item[(iv)] As a consequence of the boundedness of $U_n$ and $\nabla U_n$ we have
$$\sup_{t\in [0,T]} E\left[ \| \gamma_t^n(x) \|^a\right]\leq C \left( 1 + |x|^a\right).$$
\end{itemize}
\end{lemm}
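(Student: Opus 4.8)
The plan is to read off all four assertions from three facts already in place: the \emph{a priori} bound of Theorem~\ref{theoremU}, which under the standing hypothesis $\sup_{n\geq 0}\|b_n\|_{L_p^q}<\infty$ produces a \emph{single} constant $C=C\bigl(d,p,q,T,\lambda,\sup_k\|b_k\|_{L_p^q}\bigr)$ with
$$\|U_n\|_{H_{2,p}^q}\;\leq\;C\,\|b_n\|_{L_p^q}\;\leq\;C\sup_{k\geq 0}\|b_k\|_{L_p^q}=:M_0<\infty\qquad(n\geq 0);$$
the H\"older estimate \eqref{Uholder2} of Lemma~\ref{Ureg}; and the embedding $H_{2,p}^q\hookrightarrow C([0,T]\times\R^d)$, which is available because \eqref{pqcondition} forces $d/p<1$ (it also follows from \eqref{Uholder2}: a uniform bound on $\|\nabla U_n\|_\infty$ makes $U_n(t,\cdot)$ uniformly Lipschitz, and together with $\|U_n(t,\cdot)\|_{L^p}\leq T^{1-1/q}\|\partial_t U_n\|_{L_p^q}$, obtained by integrating $\partial_t U_n$ from $t$ to $T$, this gives $\|U_n\|_\infty\lesssim\|U_n\|_{H_{2,p}^q}$). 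The one quantitative point is that $M_0$ and the attendant constants can be made to \emph{decay} as $\lambda\to\infty$.

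I would dispose of item (iii) first: $U_n\in L^q([0,T],W^{2,p}(\R^d))$ gives $\Delta U_n\in L_p^q$ with $\|\Delta U_n\|_{L_p^q}\lesssim\|U_n\|_{\mathbb{H}_{2,p}^q}\leq\|U_n\|_{H_{2,p}^q}\leq M_0$, uniformly in $n$. For item (ii) I would revisit the proof of Theorem~\ref{theoremU} (see \cite{FedFlan12,KR05}) to track the $\lambda$--dependence: writing \eqref{PDE(f)} in mild form via the heat semigroup and absorbing $b_n\cdot\nabla U_n$ perturbatively, the relevant resolvent estimates improve with $\lambda$, yielding $\|U_n\|_{\mathbb{H}_{2,p}^q}+\|\partial_t U_n\|_{L_p^q}\leq C(d,p,q,T)\,\lambda^{-\theta}\|b_n\|_{L_p^q}$ for some $\theta>0$ and all $\lambda\geq 1$ --- this is exactly the It\^o--Tanaka--Zvonkin smallness mechanism. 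Feeding this into \eqref{Uholder2} gives
$$\sup_{t\in[0,T],\,x\in\R^d}\|\nabla U_n(t,x)\|\;\leq\;C\,T^{-1/q}\bigl(\|U_n\|_{H_{2,p}^q}+T\|\partial_t U_n\|_{L_p^q}\bigr)\;\leq\;C'\,\lambda^{-\theta}\sup_{k\geq 0}\|b_k\|_{L_p^q},$$
so one fixes $\lambda$ so large that the right-hand side is $\leq\tfrac12$. The same estimate (or the Feynman--Kac representation $U_n(t,x)=E^{t,x}\bigl[\int_t^T e^{-\lambda(s-t)}b_n(s,X_s)\,ds\bigr]$ together with Girsanov and Lemma~\ref{expmoments}) shows $\sup_{t,x}\|U_n(t,x)\|\to 0$ as $\lambda\to\infty$; for the chosen $\lambda$ this records a uniform bound $\sup_{n\geq 0}\|U_n\|_\infty\leq M_1<\infty$, which I will reuse below.

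For item (i), I would note that $V_n:=U_n-U$ solves a problem of type \eqref{PDE(f)}: the identity $b_n\cdot\nabla U_n-b\cdot\nabla U=b_n\cdot\nabla V_n+(b_n-b)\cdot\nabla U$ shows $V_n$ solves \eqref{PDE(f)} with drift $b_n$, source $\Phi_n:=(b_n-b)\cdot\nabla U+(b_n-b)$ and $V_n(T,\cdot)=0$. Since $\nabla U$ is bounded (Lemma~\ref{Ureg} applied to $U$) and $b_n\to b$ in $L_p^q$, we get $\|\Phi_n\|_{L_p^q}\leq(1+\|\nabla U\|_\infty)\|b_n-b\|_{L_p^q}\to 0$, whence Theorem~\ref{theoremU} (with its $n$--uniform constant) yields $\|U_n-U\|_{H_{2,p}^q}\to 0$. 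Then \eqref{Uholder2} applied to $V_n$ controls $\sup_{t,x}\|\nabla U_n(t,x)-\nabla U(t,x)\|$ by a multiple of $\|V_n\|_{H_{2,p}^q}+T\|\partial_t V_n\|_{L_p^q}$, and the embedding $H_{2,p}^q\hookrightarrow C([0,T]\times\R^d)$ controls $\sup_{t,x}\|U_n(t,x)-U(t,x)\|$ by a multiple of $\|V_n\|_{H_{2,p}^q}$; hence $f(n):=C\|U_n-U\|_{H_{2,p}^q}\to 0$ works (and the restriction to $B_r$ is superfluous). Finally, item (iv) is immediate from $\sup_n\|U_n\|_\infty\leq M_1$: $\|\gamma_t^n(x)\|=\|x+U_n(t,x)\|\leq|x|+M_1$, so $\|\gamma_t^n(x)\|^a\leq 2^{a-1}(|x|^a+M_1^a)\leq C(1+|x|^a)$; and if the estimate is meant with $\gamma_{t,n}$ evaluated along the flow $X_t^{x,n}$, the same inequality combined with $\sup_n E[|X_t^{x,n}|^a]\leq C(1+|x|^a)$ (Girsanov plus Lemma~\ref{expmoments}) still gives it.

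The only genuinely non-routine step is item (ii): items (i), (iii), (iv) amount to bookkeeping with the estimates already at hand, whereas (ii) hinges on the explicit decay in $\lambda$ of the a priori bound of Theorem~\ref{theoremU}, which is not part of that statement and has to be extracted from its proof (or imported from \cite{FedFlan12,KR05}).
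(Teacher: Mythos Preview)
The paper does not prove this lemma; it merely cites \cite[Lemma~3.4]{FedFlan12}. Your argument is correct and is essentially what one finds there: (iii) and (iv) are immediate from the uniform $H_{2,p}^q$ bound of Theorem~\ref{theoremU} and the embedding of Lemma~\ref{Ureg}; (i) follows by the stability argument you give (writing $V_n=U_n-U$ as the solution of \eqref{PDE(f)} with drift $b_n$ and source $(b_n-b)\cdot\nabla U+(b_n-b)\to 0$ in $L_p^q$); and for (ii) you correctly identify that the quantitative decay in $\lambda$ of the constant in \eqref{ubound} is the only non-routine ingredient and must be extracted from the proof of Theorem~\ref{theoremU} rather than from its statement. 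Your remark that the restriction to $B_r$ in (i) is superfluous, given the global estimate \eqref{Uholder2}, is also correct.
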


The following lemma gives a bound for the derivative of the inverse of the family of diffeomorphisms $\gamma_t$. See \cite{FedFlan12}, Lemma 3.5 for its proof.

\begin{lemm}\label{lemma3}
Let $\gamma_{t,n}:\R^d \rightarrow \R^d$ be the $C^1$-diffeomorphisms defined as $\gamma_{t,n}(x) := x + U_n(t,x)$ for $x\in \R^d$ associated to $X_t^{x,n}$ the solution of SDE (\ref{eqmain1}) with drift coefficient $b_n\in C_b^\infty(\R^d)$. Then
$$\displaystyle \sup_{n\geq 0} \sup_{t\in [0,T]} \|\nabla \gamma_{t,n}^{-1} \|_{C(\R^d)} \leq 2.$$
\end{lemm}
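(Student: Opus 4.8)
The plan is to reduce the statement to the uniform bound on $\nabla U_n$ from Lemma~\ref{lemma2}(ii) by combining a Neumann series argument with the inverse function theorem.

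First I would compute the Jacobian of $\gamma_{t,n}$. Since $\gamma_{t,n}(x) = x + U_n(t,x)$ and $U_n(t,\cdot)$ is $C^1$, we have $\nabla\gamma_{t,n}(x) = \mathcal{I}_d + \nabla U_n(t,x)$. Fixing the parameter $\lambda$ large enough that Lemma~\ref{lemma2}(ii) applies (the same $\lambda$ used to define $U_n$, hence $\gamma_{t,n}$), we obtain $\|\nabla U_n(t,x)\| \leq \tfrac12$ for all $n\geq 0$, $t\in[0,T]$, $x\in\R^d$, where $\|\cdot\|$ denotes the operator norm on $\R^{d\times d}$. Submultiplicativity of this norm then shows that the Neumann series $\sum_{k\geq 0}\bigl(-\nabla U_n(t,x)\bigr)^k$ converges in $\R^{d\times d}$ and equals $\bigl(\mathcal{I}_d+\nabla U_n(t,x)\bigr)^{-1}$, so $\nabla\gamma_{t,n}(x)$ is invertible with
\[
\bigl\|(\nabla\gamma_{t,n}(x))^{-1}\bigr\| \;\leq\; \sum_{k\geq 0}\|\nabla U_n(t,x)\|^k \;\leq\; \sum_{k\geq 0} 2^{-k} \;=\; 2
\]
uniformly in $n$, $t$ and $x$.

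Next I would invoke the fact, recalled above from \cite[Lemma 3.5]{FedFlan12}, that each $\gamma_{t,n}$ is a $C^1$-diffeomorphism of $\R^d$. The inverse function theorem then gives, for every $y\in\R^d$,
\[
\nabla(\gamma_{t,n}^{-1})(y) \;=\; \bigl(\nabla\gamma_{t,n}(\gamma_{t,n}^{-1}(y))\bigr)^{-1}.
\]
Applying the bound of the previous paragraph at the point $x=\gamma_{t,n}^{-1}(y)$ yields $\|\nabla(\gamma_{t,n}^{-1})(y)\|\leq 2$ for all $y\in\R^d$, and taking the supremum over $y$, then over $t\in[0,T]$ and $n\geq 0$, gives the claim.

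The argument is essentially elementary, and there is no serious obstacle; the only points requiring attention are that the estimate $\|\nabla U_n\|\leq\tfrac12$ from Lemma~\ref{lemma2}(ii) is available precisely for a sufficiently large choice of $\lambda$, which must be fixed consistently throughout, and that the norm on $\R^{d\times d}$ be taken to be the (submultiplicative) operator norm so that the Neumann estimate delivers the stated constant $2$; for any equivalent norm one still obtains a uniform bound, differing only by the equivalence constants.
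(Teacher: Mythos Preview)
Your argument is correct: the Neumann series bound combined with the inverse function theorem is exactly the standard route, and your attention to the choice of $\lambda$ and the submultiplicative norm is appropriate. The paper itself does not give an independent proof but simply refers to \cite[Lemma~3.5]{FedFlan12}, where the same elementary argument is carried out; so your proposal is essentially the proof the paper is citing.
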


The next result was shown in \cite{FedFlan11}, Corollary 13.
\begin{lemm}\label{lemma4}
Let $V_t^n$ be the process defined in (\ref{processV}). Then for every $\alpha \in \R$
$$\sup_{n\geq 0} E\left[e^{\alpha V_T^n}\right] \leq C.$$
Observe that the same estimate holds for any $t\in[0,T]$ since $V_t^n$ is an increasing process.
\end{lemm}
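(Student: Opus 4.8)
The plan is to reduce the exponential moment of $V_T^n$ to an exponential moment of a quadratic functional of a Brownian path, which is then controlled by the Khasminskii-type estimate already available in Lemma \ref{expmoments}. First I would use the identity $\gamma_{s,n}^{-1}(\tilde X_s^{n,x}) = X_s^{n,x}$, which holds because $\tilde X_s^{n,x} := \gamma_{s,n}(X_s^{n,x})$ by definition, to rewrite the integrand in \eqref{processV} as $\|\nabla^2 U_n(s, X_s^{n,x})\,\nabla\gamma_{s,n}^{-1}(\tilde X_s^{n,x})\|^2$, and then invoke Lemma \ref{lemma3} to bound the Jacobian factor by $2$ uniformly in $n$ and $s$. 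This gives, for a dimensional constant $C_d$ and any admissible lower limit $r\in[0,t]$,
$$V_T^n \;\leq\; C_d \int_0^T \|\nabla^2 U_n(s, X_s^{n,x})\|^2\, ds.$$
For $\alpha\leq 0$ the bound $E[e^{\alpha V_T^n}]\leq 1$ is immediate since $V_T^n\geq0$, so it remains to bound $\sup_n E\big[\exp\big(\alpha C_d\int_0^T\|\nabla^2 U_n(s,X_s^{n,x})\|^2\,ds\big)\big]$ for $\alpha>0$; the statement for general $t\in[0,T]$ then follows since $s\mapsto V_s^n$ is increasing.

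Next I would remove the drift $b_n$ by Girsanov's theorem: since $b_n$ is smooth and bounded, the law of $(X_s^{n,x})_{s\leq T}$ is equivalent to that of $(x+B_s)_{s\leq T}$ with Radon--Nikodym density $\mathcal{E}\big(\int_0^T b_n(u,x+B_u)\,dB_u\big)$. Applying this together with the Cauchy--Schwarz inequality splits the quantity of interest into
$$E\Big[\exp\Big(2\alpha C_d\!\int_0^T\!\|\nabla^2 U_n(s,x+B_s)\|^2\,ds\Big)\Big]^{1/2}\; E\Big[\mathcal{E}\Big(\int_0^T b_n(u,x+B_u)\,dB_u\Big)^2\Big]^{1/2}.$$
For the second factor, one checks via Cauchy--Schwarz and the supermartingale property of Dol\'{e}ans-Dade exponentials that it is bounded by $C\, E[\exp(c\int_0^T|b_n(u,x+B_u)|^2\,du)]^{1/2}$ for absolute constants $C,c>0$, and hence, by Lemma \ref{expmoments}, remains bounded uniformly in $n$. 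For the first factor I would invoke Theorem \ref{theoremU}, which yields $\sup_n\|U_n\|_{H_{2,p}^q}<\infty$; since $H_{2,p}^q\subset L^q([0,T],W^{2,p}(\R^d))$, this controls the full Hessian $\nabla^2 U_n$ (and not merely $\Delta U_n$) in $L_p^q$ uniformly in $n$, so Lemma \ref{expmoments} applied to $|\nabla^2 U_n|$, regarded as a scalar element of $L_p^q$, bounds this factor uniformly as well. Multiplying the two estimates gives the lemma.

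I expect the only genuinely delicate point to be the exponent bookkeeping underlying this last step. The Khasminskii-type estimate in Lemma \ref{expmoments} requires $p,q>2$ with $\frac{d}{p}+\frac{2}{q}<1$, and it is precisely under this condition that the square $|\nabla^2 U_n|^2$ lies in $L^{q/2}([0,T],L^{p/2}(\R^d))$ with $\frac{d}{p/2}+\frac{2}{q/2}<2$, i.e.\ in the Kato-type class for which $\sup_n E[\exp(k\int_0^T|\nabla^2 U_n(s,x+B_s)|^2\,ds)]<\infty$ holds. Everything else --- the flow identity, the Jacobian bound from Lemma \ref{lemma3}, the Girsanov change of measure and the Cauchy--Schwarz splittings --- is routine, and in particular no PDE estimates beyond those already furnished by Theorem \ref{theoremU} are needed.
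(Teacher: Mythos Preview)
Your argument is correct. The paper itself does not give a self-contained proof of Lemma~\ref{lemma4} but merely cites \cite[Corollary 13]{FedFlan11}; however, the route you take --- rewriting $\gamma_{s,n}^{-1}(\tilde X_s^{n,x})=X_s^{n,x}$, bounding $\|\nabla\gamma_{s,n}^{-1}\|$ via Lemma~\ref{lemma3}, passing to Brownian motion by Girsanov and Cauchy--Schwarz, and then invoking Lemma~\ref{expmoments} with the uniform $L_p^q$-bound on $\nabla^2 U_n$ coming from Theorem~\ref{theoremU} --- is exactly the argument the paper sketches in the Remark immediately following Lemma~\ref{estimate1}.
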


\section{} \label{App2}

The following result which is due to \cite[Theorem 1] {DPMN92} gives a compactness criterion for subsets of $L^{2}(\Omega ;\mathbb{R}^{d})$ using Malliavin calculus.

\begin{thm}
\label{MCompactness}Let $\left\{ \left( \Omega ,\mathcal{A},P\right)
;H\right\} $ be a Gaussian probability space, that is $\left( \Omega ,%
\mathcal{A},P\right) $ is a probability space and $H$ a separable closed
subspace of Gaussian random variables of $L^{2}(\Omega )$, which generate
the $\sigma $-field $\mathcal{A}$. Denote by $\mathbf{D}$ the derivative
operator acting on elementary smooth random variables in the sense that%
\begin{equation*}
\mathbf{D}(f(h_{1},\ldots,h_{n}))=\sum_{i=1}^{n}\partial
_{i}f(h_{1},\ldots,h_{n})h_{i},\text{ }h_{i}\in H,f\in C_{b}^{\infty }(\mathbb{R%
}^{n}).
\end{equation*}%
Further let $\mathbf{D}_{1,2}$ be the closure of the family of elementary
smooth random variables with respect to the norm%
\begin{equation*}
\left\Vert F\right\Vert _{1,2}:=\left\Vert F\right\Vert _{L^{2}(\Omega
)}+\left\Vert \mathbf{D}F\right\Vert _{L^{2}(\Omega ;H)}.
\end{equation*}%
Assume that $C$ is a self-adjoint compact operator on $H$ with dense image.
Then for any $c>0$ the set%
\begin{equation*}
\mathcal{G}=\left\{ G\in \mathbf{D}_{1,2}:\left\Vert G\right\Vert
_{L^{2}(\Omega )}+\left\Vert C^{-1} \mathbf{D} \,G\right\Vert _{L^{2}(\Omega ;H)}\leq
c\right\}
\end{equation*}%
is relatively compact in $L^{2}(\Omega )$.
\end{thm}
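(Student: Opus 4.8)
The plan is to work with the Wiener chaos decomposition $L^{2}(\Omega)=\bigoplus_{n\geq 0}\mathcal{H}_{n}$ of the Gaussian space and to exploit that $\mathbf{D}$ is a graded operator, mapping $\mathcal{H}_{n}$ into $\mathcal{H}_{n-1}\otimes H$, the images for distinct $n$ being mutually orthogonal in $L^{2}(\Omega;H)$. Since $\mathrm{id}_{L^{2}(\Omega)}\otimes C^{-1}$ preserves this grading, for $G=\sum_{n}G_{n}\in\mathcal{G}\subset\mathbf{D}_{1,2}$ one has on one hand $\|\mathbf{D}G\|_{L^{2}(\Omega;H)}=\|(\mathrm{id}\otimes C)(\mathrm{id}\otimes C^{-1})\mathbf{D}G\|\leq\|C\|\,c$, hence $\sum_{n}n\|G_{n}\|_{L^{2}(\Omega)}^{2}=\|\mathbf{D}G\|_{L^{2}(\Omega;H)}^{2}\leq\|C\|^{2}c^{2}$, and on the other hand, by the orthogonality just mentioned, $\|(\mathrm{id}\otimes C^{-1})\mathbf{D}G_{n}\|_{L^{2}(\Omega;H)}\leq c$ for every $n$. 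I would first use the first bound to reduce to finitely many chaoses: for a sequence $(G^{(k)})_{k}\subset\mathcal{G}$ the tail $\sum_{n>N}\|G_{n}^{(k)}\|_{L^{2}(\Omega)}^{2}\leq\|C\|^{2}c^{2}/N$ is small uniformly in $k$, so it suffices to prove that for each fixed $n$ the sequence $(G_{n}^{(k)})_{k}$ is relatively compact in $\mathcal{H}_{n}$; a diagonal extraction together with the uniform tail estimate then produces an $L^{2}(\Omega)$-convergent subsequence of $(G^{(k)})_{k}$.

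The heart of the argument is this fixed-chaos step for $n\geq 1$ (the case $n=0$ being trivial), and it is the only place where compactness of $C$ is essential. Since $C$ is self-adjoint, compact and injective (injectivity follows from the density of its range), fix an orthonormal eigenbasis $\{e_{j}\}_{j\geq 1}$ of $H$ with $Ce_{j}=\mu_{j}e_{j}$ and $|\mu_{1}|\geq|\mu_{2}|\geq\cdots\to 0$. Writing $G_{n}^{(k)}=I_{n}(f^{(k)})$ with $f^{(k)}=\sum_{\mathbf{j}}a_{\mathbf{j}}^{(k)}\,e_{j_{1}}\otimes\cdots\otimes e_{j_{n}}$ for symmetric coefficients $a_{\mathbf{j}}^{(k)}$ indexed by $\mathbf{j}=(j_{1},\dots,j_{n})$, a direct computation using $\mathbf{D}I_{n}(f)=nI_{n-1}(f)$ and the It\^{o} isometry gives $\|G_{n}^{(k)}\|_{L^{2}(\Omega)}^{2}=n!\sum_{\mathbf{j}}|a_{\mathbf{j}}^{(k)}|^{2}$ and $\|(\mathrm{id}\otimes C^{-1})\mathbf{D}G_{n}^{(k)}\|_{L^{2}(\Omega;H)}^{2}=n\cdot n!\sum_{\mathbf{j}}\mu_{j_{n}}^{-2}|a_{\mathbf{j}}^{(k)}|^{2}$, and by symmetry of $a^{(k)}$ the weight $\mu_{j_{n}}^{-2}$ may be replaced by $\mu_{j_{i}}^{-2}$ for any $i$. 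Hence $(f^{(k)})_{k}$ is bounded in $\widehat{H^{\otimes n}}$ and $\sup_{k}\sum_{\mathbf{j}}\mu_{j_{i}}^{-2}|a_{\mathbf{j}}^{(k)}|^{2}\leq c^{2}/(n\cdot n!)$ for every $i$. Whenever some component of $\mathbf{j}$ exceeds $M$ one has $\mu_{j_{i}}^{-2}\geq\mu_{M+1}^{-2}$ for that $i$, so
\[
\sum_{\mathbf{j}:\,\max_{i}j_{i}>M}|a_{\mathbf{j}}^{(k)}|^{2}\leq\sum_{i=1}^{n}\ \sum_{\mathbf{j}:\,j_{i}>M}|a_{\mathbf{j}}^{(k)}|^{2}\leq n\,\mu_{M+1}^{2}\,\frac{c^{2}}{n\cdot n!}\xrightarrow[M\to\infty]{}0
\]
uniformly in $k$. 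Boundedness together with this uniform smallness of the tails with respect to the orthogonal basis $\{e_{j_{1}}\otimes\cdots\otimes e_{j_{n}}\}$ of $\widehat{H^{\otimes n}}\cong\mathcal{H}_{n}$ is precisely the classical criterion for relative compactness in a separable Hilbert space, which delivers the desired convergent subsequence of $(G_{n}^{(k)})_{k}$.

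To finish I would assemble the pieces: by a diagonal argument over $n$ there is a subsequence (still indexed by $k$) and elements $\bar{G}_{n}\in\mathcal{H}_{n}$ with $G_{n}^{(k)}\to\bar{G}_{n}$ in $\mathcal{H}_{n}$; the uniform bound $\sum_{n>N}\|G_{n}^{(k)}\|^{2}\leq\|C\|^{2}c^{2}/N$ passes to the limit, so $\bar{G}:=\sum_{n}\bar{G}_{n}$ defines an element of $L^{2}(\Omega)$, and splitting $\|G^{(k)}-\bar{G}\|_{L^{2}(\Omega)}^{2}$ into the first $N$ chaoses plus a remainder controlled uniformly in $k$ shows $G^{(k)}\to\bar{G}$ in $L^{2}(\Omega)$. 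Thus every sequence in $\mathcal{G}$ has an $L^{2}(\Omega)$-convergent subsequence, i.e. $\mathcal{G}$ is relatively compact. The main obstacle is exactly the fixed-chaos step: the hypothesis furnishes only a single power of $C^{-1}$ acting on one tensor slot, and one has to observe that, once combined with the symmetry of the chaos kernels, this already forces the decay $\mu_{j}\to 0$ to suppress the high-frequency part of the kernel in all slots simultaneously.
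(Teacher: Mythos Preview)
The paper does not supply its own proof of this statement; it simply quotes the result from \cite{DPMN92} and uses it as a black box. Your argument is a correct and complete reconstruction of the compactness criterion: the key ingredients---reduction to finitely many chaoses via the number-operator identity $\|\mathbf{D}G\|^{2}=\sum_{n} n\|G_{n}\|^{2}$, and then, on each fixed chaos, the use of the eigenbasis of the compact self-adjoint injective operator $C$ together with the symmetry of the chaos kernels to obtain uniform decay of the high-frequency coefficients---are exactly the ideas behind the original proof in \cite{DPMN92}. So there is nothing to compare against in this paper, and your proposal stands on its own.
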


A useful bound in connection with Theorem \ref{MCompactness}, based on fractional Sobolev spaces is the following (see \cite{DPMN92}):

\begin{lemm}
\label{DaPMN} Let $v_{s},s\geq 0$ be the Haar basis of $L^{2}([0,T])$. For
any $0<\alpha <1/2$ define the operator $A_{\alpha }$ on $L^{2}([0,T])$ by%
\begin{equation*}
A_{\alpha }v_{s}=2^{k\alpha }v_{s}\text{, if }s=2^{k}+j\text{ }
\end{equation*}%
for $k\geq 0,0\leq j\leq 2^{k}$ and%
\begin{equation*}
A_{\alpha }T=T.
\end{equation*}%
Then for all $\beta $ with $\alpha <\beta <(1/2),$ there exists a constant $%
c_{1}$ such that%
\begin{equation*}
\left\Vert A_{\alpha }f\right\Vert \leq c_{1}\left\{ \left\Vert f\right\Vert
_{L^{2}([0,T])}+\left( \int_{0}^{T}\int_{0}^{T}\frac{\left\vert
f(t)-f(t^{\prime })\right\vert ^{2}}{\left\vert t-t^{\prime }\right\vert
^{1+2\beta }}dt\,dt^{\prime }\right) ^{1/2}\right\} .
\end{equation*}
\end{lemm}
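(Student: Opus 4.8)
The plan is to diagonalise $A_{\alpha}$ in the Haar basis and reduce the inequality to a scale-by-scale oscillation bound. After a harmless reindexing of the basis by dyadic scale, write the $L^{2}([0,T])$-expansion $f = \langle f,v_{0}\rangle v_{0} + \sum_{k\geq 0}\sum_{j=0}^{2^{k}-1}\langle f,v_{k,j}\rangle v_{k,j}$, where $v_{0}$ is the normalised constant function (the one fixed by $A_{\alpha}$) and $v_{k,j}$ is the Haar function at dyadic level $k$, supported on the interval $I_{k,j}\subset[0,T]$ with $|I_{k,j}|=2^{-k}T$, normalised so that $\|v_{k,j}\|_{L^{2}([0,T])}=1$ and $\int_{I_{k,j}}v_{k,j}(t)\,dt=0$. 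Since $A_{\alpha}$ multiplies the level-$k$ coefficients by $2^{k\alpha}$ and fixes $v_{0}$,
\[
\|A_{\alpha}f\|_{L^{2}([0,T])}^{2} = |\langle f,v_{0}\rangle|^{2} + \sum_{k\geq 0}2^{2k\alpha}\sum_{j=0}^{2^{k}-1}|\langle f,v_{k,j}\rangle|^{2}.
\]
The first term is bounded by $\|f\|_{L^{2}([0,T])}^{2}$, so the whole matter reduces to estimating the double sum by the Gagliardo seminorm $[f]_{\beta}^{2}:=\int_{0}^{T}\int_{0}^{T}|f(t)-f(t')|^{2}|t-t'|^{-1-2\beta}\,dt\,dt'$.

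The key step is a local bound on each Haar coefficient. Using the cancellation of $v_{k,j}$ one has $\langle f,v_{k,j}\rangle = \langle f-c,v_{k,j}\rangle$ for any constant $c$; choosing $c=\bar f_{I_{k,j}}$, the average of $f$ over $I_{k,j}$, Cauchy--Schwarz (recalling that $v_{k,j}$ is supported on $I_{k,j}$ with unit $L^{2}$-norm) together with Jensen's inequality gives
\[
|\langle f,v_{k,j}\rangle|^{2} \leq \int_{I_{k,j}}|f(t)-\bar f_{I_{k,j}}|^{2}\,dt \leq \frac{1}{|I_{k,j}|}\int_{I_{k,j}}\int_{I_{k,j}}|f(t)-f(s)|^{2}\,ds\,dt.
\]
For $s,t\in I_{k,j}$ we have $|t-s|\leq|I_{k,j}|$, hence $|f(t)-f(s)|^{2}\leq|I_{k,j}|^{1+2\beta}|f(t)-f(s)|^{2}|t-s|^{-1-2\beta}$, and therefore
\[
|\langle f,v_{k,j}\rangle|^{2} \leq |I_{k,j}|^{2\beta}\int_{I_{k,j}}\int_{I_{k,j}}\frac{|f(t)-f(s)|^{2}}{|t-s|^{1+2\beta}}\,ds\,dt.
\]
For fixed $k$ the intervals $\{I_{k,j}\}_{j}$ are pairwise disjoint and $|I_{k,j}|^{2\beta}=(2^{-k}T)^{2\beta}$, so summing over $j$ and inserting the factor $2^{2k\alpha}$ yields $2^{2k\alpha}\sum_{j}|\langle f,v_{k,j}\rangle|^{2}\leq T^{2\beta}\,2^{-2k(\beta-\alpha)}\,[f]_{\beta}^{2}$.

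Finally, since $\beta>\alpha$ the geometric series $\sum_{k\geq 0}2^{-2k(\beta-\alpha)}$ converges, and summing over $k\geq 0$ produces a constant $c_{1}=c_{1}(\alpha,\beta,T)$ with $\|A_{\alpha}f\|_{L^{2}([0,T])}^{2}\leq\|f\|_{L^{2}([0,T])}^{2}+c_{1}^{2}[f]_{\beta}^{2}$; the stated inequality then follows by taking square roots and using $\sqrt{a+b}\leq\sqrt{a}+\sqrt{b}$. I do not expect a genuine obstacle here: the argument is in essence the classical continuous embedding $H^{\beta}([0,T])\hookrightarrow\mathrm{Dom}(A_{\alpha})$ for $\alpha<\beta$, and the only point requiring care is the bookkeeping of the Haar normalisation together with the disjointness of the supports at each dyadic level, which is what makes the summation in $j$ collapse into a single integral over $[0,T]^{2}$.
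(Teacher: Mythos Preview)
Your argument is correct. The diagonalisation of $A_{\alpha}$ in the Haar basis, the oscillation bound on each coefficient via the mean-zero property and Cauchy--Schwarz, the pointwise estimate $|t-s|^{1+2\beta}\leq|I_{k,j}|^{1+2\beta}$ on each dyadic interval, and the final summation over $k$ using $\alpha<\beta$ are all sound; the disjointness of the $I_{k,j}$ at fixed $k$ is precisely what lets you bound $\sum_{j}\iint_{I_{k,j}\times I_{k,j}}$ by the full Gagliardo integral.

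As for comparison: the paper does not give its own proof of this lemma. It is stated in the appendix as a quoted result from \cite{DPMN92} (Da Prato--Malliavin--Nualart) and used as a black box in Corollary~\ref{relcompact}. So there is nothing to compare against beyond noting that your proof supplies the details the paper omits. Your approach is in fact the standard one and matches the spirit of the original reference: it is the identification of $\mathrm{Dom}(A_{\alpha})$ with a Besov-type space $B^{\alpha}_{2,2}$ via Haar coefficients, combined with the embedding $H^{\beta}\hookrightarrow B^{\alpha}_{2,2}$ for $\alpha<\beta<1/2$. The only cosmetic point is that the lemma as stated indexes the Haar functions by a single integer $s=2^{k}+j$ rather than the pair $(k,j)$, but your reindexing is harmless and makes the scale structure transparent.
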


A direct consequence of Theorem \ref{MCompactness} and Lemma \ref{DaPMN} is now the following compactness criterion which is essential for the proof of Corollary \ref{relcompact}.

\begin{cor} \label{compactcrit}
Let a sequence of $\mathcal{F}_T$-measurable random variables $X_n\in\mathbb{D}_{1,2}$, $n=1,2...$, be such that there exist constants $\alpha > 0$ and $C>0$ with
$$
\sup_n E[|X_n|^2] \leq C ,
$$
$$
\sup_n E \left[ \| D_t X_n - D_{t'} X_n \|^2 \right] \leq C |t -t'|^{\alpha}
$$
for $0 \leq t' \leq t \leq T$ and
$$
\sup_n\sup_{0 \leq t \leq T} E \left[ \| D_t X_n \|^2 \right] \leq C \,.
$$
Then the sequence $X_n$, $n=1,2...$, is relatively compact in $L^{2}(\Omega )$.
\end{cor}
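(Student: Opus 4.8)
The plan is to obtain this as a direct consequence of the compactness criterion in Theorem \ref{MCompactness}, applied with a self-adjoint compact operator built from the operators $A_{\alpha'}$ of Lemma \ref{DaPMN}. Since $\alpha>0$ is fixed, I would first choose auxiliary exponents with $0<\alpha'<\beta<\min\{\alpha/2,1/2\}$; the requirement $\beta<\alpha/2$ is precisely what will make the double integral $\int_0^T\int_0^T|t-t'|^{\alpha-1-2\beta}\,dt\,dt'$ converge, while $\beta<1/2$ is needed in order to invoke Lemma \ref{DaPMN}. I would then take $H=L^2([0,T];\R^d)$ identified with the first chaos, let $A_{\alpha'}$ act coordinatewise on the components of a Malliavin derivative, and set $C:=A_{\alpha'}^{-1}$. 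Since $A_{\alpha'}$ is diagonal in the Haar basis with eigenvalues $2^{k\alpha'}\to\infty$ (and $1$ on the constant function), it is self-adjoint with trivial kernel, so $C$ is self-adjoint, compact, and has dense image, exactly as required in Theorem \ref{MCompactness}.

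The main step is then to bound $E[\|C^{-1}\mathbf{D}X_n\|_{L^2(\Omega;H)}^2]$ uniformly in $n$, where $\mathbf{D}X_n=D_\cdot X_n$. Applying Lemma \ref{DaPMN} to each real component of $t\mapsto D_tX_n$, squaring, using $(a+b)^2\leq2a^2+2b^2$, taking expectations, and applying Fubini's theorem, one gets an estimate of the form
\begin{equation*}
E\big[\|C^{-1}\mathbf{D}X_n\|_{L^2(\Omega;H)}^2\big]\lesssim\int_0^TE\big[\|D_tX_n\|^2\big]\,dt+\int_0^T\int_0^T\frac{E\big[\|D_tX_n-D_{t'}X_n\|^2\big]}{|t-t'|^{1+2\beta}}\,dt\,dt',
\end{equation*}
with an implied constant depending only on $\alpha',\beta,T$. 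By the third hypothesis the first summand is at most $CT$, and by the second hypothesis the second summand is at most $C\int_0^T\int_0^T|t-t'|^{\alpha-1-2\beta}\,dt\,dt'<\infty$, all of these bounds being independent of $n$. Combined with $\sup_nE[|X_n|^2]\leq C$, this yields a constant $c>0$, independent of $n$, with $\|X_n\|_{L^2(\Omega)}+\|C^{-1}\mathbf{D}X_n\|_{L^2(\Omega;H)}\leq c$ for every $n$.

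It then only remains to apply Theorem \ref{MCompactness} with this $C$ and $c$: the set $\mathcal{G}=\{G\in\mathbb{D}_{1,2}:\|G\|_{L^2(\Omega)}+\|C^{-1}\mathbf{D}G\|_{L^2(\Omega;H)}\leq c\}$ is relatively compact in $L^2(\Omega)$, and the previous step shows $X_n\in\mathcal{G}$ for all $n$, so $\{X_n\}_{n\geq1}$ is relatively compact. I do not expect a substantial obstacle here; the proof is essentially a matter of matching the three hypotheses to the input of Theorem \ref{MCompactness}. The only points needing mild care are the choice of $\alpha'$ and $\beta$ making both Lemma \ref{DaPMN} applicable and the double integral convergent, and checking that $A_{\alpha'}^{-1}$ indeed has dense range so that Theorem \ref{MCompactness} is available.
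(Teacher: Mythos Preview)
Your proposal is correct and follows exactly the route the paper indicates: the corollary is stated there as a direct consequence of Theorem \ref{MCompactness} combined with Lemma \ref{DaPMN}, and your argument (taking $C=A_{\alpha'}^{-1}$ with $0<\alpha'<\beta<\min\{\alpha/2,1/2\}$ and checking that the resulting double integral $\int_0^T\int_0^T|t-t'|^{\alpha-1-2\beta}\,dt\,dt'$ is finite) is precisely the intended verification, matching also how the paper applies this corollary in the proof of Corollary \ref{relcompact}.
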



\begin{thebibliography}{99}

\bibitem{BNP15} D. R. Ba\~{n}os, T. Nilssen, F. Proske, \textit{Pathwise uniqueness and higher order Fr\'{e}chet differentiability of stochastic flows of fractional Brownian motion driven SDE's with singular drift.} (work in progress)

\bibitem{DPMN92} G. Da Prato, P. Malliavin, D. Nualart, \textit{Compact families of Wiener functionals.} C. R. Acad. Sci. Paris, t. 315, S\'{e}rie \textbf{1}, (1992), p1287--1291.

\bibitem{DPFPR13} G. Da Prato, F. Flandoli, E. Priola, M. R\"{o}ckner, \textit{Strong uniqueness for stochastic evolution equations in Hilbert spaces with bounded and measurable drift.} Ann. of Prob. Vol.41(5), pp.3306-3344, (2013)

\bibitem{DOP08} G. Di Nunno, B. \O ksendal, F. Proske, \textit{Malliavin Calculus for L\'{e}vy Processes with Applications to Finance.} Springer (2008).

\bibitem{FlanLN11} F. Flandoli, \textit{Random PErtubation of PDEs and Fluid Dynamic Models.} \'{E}cole d'\'{E}t\'{e} de Probabiliti\'{e}s de Saint-Flour XL -2010, Lecture Notes in Mathematics, Springer, (2011)

\bibitem{FNP14} F. Flandoli, T. Nilssen, F. Proske, \textit{Malliavin differentiability and strong solutions for a class of SDE in Hilbert spaces}, \url{https://www.duo.uio.no/handle/10852/38087?show=full}

\bibitem{FedFlan11} E. Fedrizzi and F. Flandoli, \textit{Pathwise uniqueness and continuous dependence for SDE's with nonregular drift}, Stochastics 83 (3), (2011), 241-257.

\bibitem{FedFlan12} E. Fedrizzi and F. Flandoli, \textit{H\"older Flow and Differentiability for SDEs with Nonregular Drift}, to appear in Stochastic Analysis and Applications (2012).

\bibitem{FedFlan10} E. Fedrizzi and F. Flandoli, \textit{Noise prevents singularities in linear transport equations.} Journal Of Functional Analysis, 264 (6), (2013), 1329--1354.

\bibitem{GyK96} I. Gy\"{o}ngy, N. V. Krylov, \textit{Existence of
strong solutions for It\^{o}'s stochastic equations via approximations.}
Probab. Theory Relat. Fields, {\bf 105} (1996), 143-158.

 \bibitem{HP14} S. Haadem, F. Proske, \textit{On the Construction and Malliavin Differentiability of Solutions of Levy Noise driven SDE's with Singular Coefficients.} Journal of Functional Analysis.  ISSN 0022-1236.  266(8), s 5321- 5359 . doi: 10.1016/j.jfa.2014.02.009
 
\bibitem{HKPS93} T. Hida, H. H. Kuo, J. Potthoff, L. Streit, \textit{White Noise: An Infinite Dimensional Calculus.} Kluwer Academic, (1993).
  
  \bibitem{KX95} G. Kallianpur, J. Xiong, \textit{Stochastic
Differential Equations in Infinite Dimensional Spaces.} IMS Lecture Notes
Monograph Series (1995).
  
  \bibitem{Kar98} I. Karatzas, S. E. Shreve, \textit{Brownian Motion and Stochastic Calculus.} Springer, 2n ed. (1998).  

\bibitem{Kryl01} N. V. Krylov, \textit{Some Properties Of Traces For Stochastic And Deterministic Parabolic Weighted Sobolev Spaces}, J. Funct. Anal. \textbf{183} (1), (2001), 1-41.

  \bibitem{KR05}  N.V. Krylov, M. R\"{o}ckner, \textit{ Strong
solutions of stochastic equations with singular time dependent drift.} Prob.
Theory Rel. Fields {\bf 131} (2) (2005), 154-196.

\bibitem{KrylRock05} N. V. Krylov, M. R\"{o}ckner, \textit{Strong solutions to stochastic equations with singular time dependent drift}, Prob. Theory Rel. Fields \textbf{131} (2) (2005), 154-196.

\bibitem{Kuo96} H.-H. Kuo, \textit{White Noise Distribution Theory.} Prob. and
Soch. Series, Boca Raton, FL: CRC Press, (1996).

\bibitem{LP04} A. Lanconelli, F. Proske, \textit{On explicit strong solutions of
It\^{o}-SDE's and the Donsker delta function of a diffusion.} Infin. Dimen.
Anal. Quant. Prob. related Topics, {\bf 7} (3) (2004).
  
  \bibitem{Mall78}  P. Malliavin, \textit{ Stochastic calculus of variations
and hypoelliptic operators.} In: Proc. Inter. Symp. on Stoch. Diff.
Equations, Kyoto 1976, Wiley, (1978), 195--263.

  
  \bibitem{Mall97}  P. Malliavin, \textit{Stochastic Analysis.} Springer (1997)

\bibitem{MMNPZ10} O. Menoukeu-Pamen, T. Meyer-Brandis, T. Nilssen, F. Proske, T. Zhang, \textit{A variational approach to the construction and Malliavin differentiability of strong solutions of SDE's.} Math. Ann. {\bf 357}(2) (2013), 761--799.

 \bibitem{MBP06}  T. Meyer-Brandis, F. Proske, \textit{On the
existence and explicit representability of strong solutions of L\'{e}vy
noise driven SDE's.} Communications in Mathematical Sciences, {\bf 4} (1) (2006).

\bibitem{MBP09}  T. Meyer-Brandis, F. Proske, \textit{Explicit representation of strong solutions of SDE's driven by infinite
dimensional L\'{e}vy processes.} Journal of Theoretical Probability, to
appear(2009).
        
 \bibitem{MBP10}  T. Meyer-Brandis, F. Proske, \textit{Construction of strong solutions of SDE's via Malliavin calculus.} Journal of Funct. Anal.
  {\bf 258} (2010), 3922--3953.
   

\bibitem{MNP} Mohammed, S.E.A., Nilssen, T. Proske, F.: Sobolev
differentiable stochastic flows of SDE's with measurable drift and
applications. arXiv: 1204.3867 (2012).

\bibitem{Nils14} T. Nilssen, \textit{One-dimensional SDE's with Discontinuous, Unbounded Drift and Continuously Differentiable Solutions to the Stochastic Transport Equation.} \url{https://www.duo.uio.no/handle/10852/34443}
 
 \bibitem{Nua10} D. Nualart, \textit{The Malliavin Calculus and Related Topics.} 2nd Ed. Springer (2010).

\bibitem{Oba94} N. Obata, \textit{White Noise Calculus and Fock Space.}  LNM {\bf 1577}, Springer (1994).

\bibitem{PS91} J. Potthoff, L. Streit, \textit{A characterization of
Hida distributions.} Journal of Funct. Anal. {\bf 101} (1991) 212--229.

\bibitem{Pro07} F. Proske, \textit{Stochastic differential equations- some new ideas.}
Stochastics, {\bf 79} (2007), 563-600.]

\bibitem{Ver79} A.Y. Veretennikov, \textit{On the strong solutions of
stochastic differential equations.} Theory Probab. Appl. {\bf 24} (1979), 354--366.

\bibitem{YW71} T. Yamada, S. Watanabe, \textit{On the uniqueness of solutions of stochastic differential equations.} J. Math. Kyoto Univ. \textbf{II}, 155-167 (1971)

\bibitem{Zvon74} A.K. Zvonkin, \textit{A transformation of the state space of a
diffusion process that removes the drift.} Math.USSR (Sbornik) {\bf 22} (1974), 129--149.


\end{thebibliography}
\end{document}